\newcommand{\norm}[1]{\left\lVert #1 \right\rVert}
\newtheorem{thm}{Theorem}[section]
\newaliascnt{prp}{thm}
\newtheorem{prp}[prp]{Proposition}
\newaliascnt{cor}{thm}
\newtheorem{cor}[cor]{Corollary}
\newaliascnt{lem}{thm}
\newtheorem{lem}[lem]{Lemma}
\theoremstyle{definition}
\newaliascnt{dfn}{thm}
\newtheorem{dfn}[dfn]{Definition}
\newtheorem{qst}{Question}
\numberwithin{equation}{section}
\newcounter{cnt}
\author{Tristan Bice}
\address{Institute of Mathematics of the Polish Academy of Sciences, Warsaw, Poland}
\email{tristan.bice@gmail.com}
\urladdr{http://www.tristanbice.com}
\author{Alessandro Vignati}
\address{Department of Mathematics and Statistics, York University, Toronto, Ontario, Canada}
\email{ale.vignati@gmail.com}
\urladdr{http://www.automorph.net/avignati}
\thanks{The first author has been supported by a WCMCS grant at IMPAN (Poland). The second author is partially supported by a York University Susan Mann Scholarship.}
\keywords{filter, C*-algebra, compact projection, non-symmetric distance}
\subjclass[2010]{06A75, 46L05, 46L85, 54E99}
\title{$\mathrm{C}^*$-Algebra Distance Filters}
\begin{document}

\begin{abstract}
We use non-symmetric distances to give a self-contained account of C*-algebra filters and their corresponding compact projections, simultaneously simplifying and extending their general theory.
\end{abstract}

\maketitle

\section*{Introduction}

Quantum filters were introduced by Farah and Weaver to analyze pure states on $\mathrm{C}^*$-algebras and various conjectures concerning them, like Anderson's conjecture and the Kadison-Singer conjecture (which has since become the Marcus-Speilman-Srivastava theorem \textendash\, see \cite{MarcusSpielmanSrivastava2013}).  They were also considered more recently in \cite{BlecherWeaver2016} in relation to quantum analogs of certain large cardinals, and they even make an appearance much earlier in \cite{AkemannPedersen1992} as faces of the positive unit ball.  While their basic theory was fleshed out in \cite{Bice2011} (as `norm filters') and \cite{FarahWofsey2012} (and in a forthcoming book by Farah), there remained some fundamental questions which we aim to address in this paper.

The first such question is why they should be considered as filters at all.  Filters in the classical sense are defined from a transitive relation (as the downwards directed upwards closed subsets), but in general there is no such relation defining quantum filters.  Indeed, it can even happen that every maximal quantum filter in a $\mathrm{C}^*$-algebra fails to be a filter in the traditional order theoretic sense \textendash\, see \cite[Corollary 6.6]{Bice2011}.  While it might be intuitively clear that quantum filters are the `right' quantum analog, and their utility in analyzing states justifies their study, regardless of whether they are considered as filters or not, a more precise connection to order theory would of course be desirable.

The key here is to replace classical transitive relations with `continuous' ones.  These are the non-symmetric distances, binary functions $\mathbf{D}$ to $[0,\infty]$ satisfying the continuous version of transitivity, namely the triangle inequality
\[\mathbf{D}(x,y)\leq\mathbf{D}(x,z)+\mathbf{D}(z,y).\]
The first order sentences defining classical filters also have continuous versions, with the quantifiers $\forall$ and $\exists$ replaced by suprema and infima respectively.  Then quantum filters are indeed the continuous filters with respect to the appropriate distance $\mathbf{d}$ on the positive unit ball $A^1_+$, namely
\[\mathbf{d}(a,b)=\norm{a-ab}.\]
This simple observation allows for a markedly different approach to the theory.

In \autoref{D} we start off by examining the relationship between various distances and distance-like functions.  As with metrics, uniform equivalence plays a fundamental role.  We move on to $\mathbf{d}$-filters in \autoref{F}, using these relationships to provide characterizations using the distance, order, multiplicative and convex structure of $A^1_+$.  In \autoref{CP} we then show how $\mathbf{d}$-filters in $A$ represent compact projections in $A^{**}$ (just as hereditary $\mathrm{C}^*$-subalgebras in $A$ represent open projections in $A^{**}$).  We finish by examining interior containment of compact projections and its relation to the reverse Hausdorff distance on $\mathbf{d}$-filters.

\section{Distances}\label{D}

We will deal with a number of binary functions $\mathbf{D}$ from some set $X$ to $[0,\infty]$.  We view these as `generalized' or `continuous' relations on $X$.  More precisely, any $\mathbf{D}:X\times X\rightarrow[0,\infty]$ defines a classical relation $|\mathbf{D}|\subseteq X\times X$ by
\[x|\mathbf{D}|y\quad\Leftrightarrow\quad\mathbf{D}(x,y)=0.\footnote{Note we are using the standard infix notation $xRy$ to mean $(x,y)\in R$.}\]
We say that the function $\mathbf{D}$ \emph{quantifies} the relation $|\mathbf{D}|$.  Conversely, every relation $R\subseteq X\times X$ has a trivial quantification given by its characteristic function, which we also denote by $R$, specifically
\[
R(x,y)=\begin{cases}0&\text{if }xRy\\ \infty&\text{otherwise}.\end{cases}
\]
We define the \emph{composition} $\mathbf{D}\circ\mathbf{E}$ of any $\mathbf{D},\mathbf{E}:X\times X\rightarrow[0,\infty]$,  by
\[
(\mathbf{D}\circ\mathbf{E})(x,y)=\inf_{z\in X}(\mathbf{D}(x,z)+\mathbf{E}(z,y)).
\]
Note when $R$ and $S$ are relations (identified with their characteristic functions),
\[
x(R\circ S)y\quad\Leftrightarrow\quad\exists z\in X\ (xRzSy),
\]
so $\circ$ extends the usual composition of classical relations.\footnote{In other words, the category $\mathbf{Rel}$ of classical relations forms a wide subcategory of $\mathbf{GRel}$, the category of generalized relations \textendash\, see \cite[\S1]{BDD} for more details.} Moreover, we always have
\[|\mathbf{D}|\circ|\mathbf{E}|\subseteq|\mathbf{D}\circ\mathbf{E}|.\]
We say that $\mathbf{D}$ is \emph{$\mathbf{E}$-invariant} when $\mathbf{D}=\mathbf{D}\circ\mathbf{E}=\mathbf{E}\circ\mathbf{D}$.

\begin{dfn}
$\mathbf{D}$ is a \emph{distance} if
\[\label{tri}\tag{$\triangle$}\mathbf{D}\leq\mathbf{D}\circ\mathbf{D}.\]
\end{dfn}

On a $\mathrm{C}^*$-algebra $A$, the only distance usually considered is the metric given by
\[\mathbf{e}(x,y)=\norm{x-y}.\]
Indeed, metrics are precisely the symmetric distances quantifying the equality relation.  However, our thesis is that one should also consider various non-symmetric distances on $\mathrm{C}^*$-algebras which quantify other important order relations like
\begin{align*}
a\ll b\qquad&\Leftrightarrow\qquad a=ab.\\
a\leq b\qquad&\Leftrightarrow\qquad b-a\in A_+.
\end{align*}
Here $A_+$ denotes the positive elements in $A$, while $A_\mathrm{sa}$, $A^r$ and $A^{=r}$ will denote the self-adjoints, $r$-ball and $r$-sphere respectively.  We also consider $A$ embedded canonically in its enveloping von Neumann algebra $A^{**}$ and set
\[\widetilde{A}=A+\mathbb{C}1.\]
So if $A$ is unital then $\widetilde A=A$, otherwise $\widetilde{A}$ is the unitization of $A$ (see \cite[II.1.2]{Blackadar2017}).

\begin{prp}
\begin{align}
\label{ddist}\mathbf{d}(a,b)&=\norm{a-ab}\quad\text{is an $\mathbf{e}$-invariant distance on }A^1_+\text{ quantifying }\ll.\\
\label{hdist}\mathbf{h}(a,b)&=\norm{(a-b)_+}\quad\text{is an $\mathbf{e}$-invariant distance on }A_\mathrm{sa}\text{ quantifying }\leq.
\end{align}
\end{prp}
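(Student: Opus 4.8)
The plan is to handle \eqref{ddist} and \eqref{hdist} in parallel, since in both cases the ``quantifying'' assertion is immediate and the distance and $\mathbf{e}$-invariance properties reduce to a handful of elementary estimates. Indeed $\mathbf{d}(a,b)=\norm{a-ab}=0$ precisely when $a=ab$, i.e. $a\ll b$, and $\mathbf{h}(a,b)=\norm{(a-b)_+}=0$ precisely when $(a-b)_+=0$, i.e. $a\leq b$. For the rest, I will first record that for \emph{any} $\mathbf{D}$ one has $\mathbf{D}\circ\mathbf{e}\leq\mathbf{D}$ and $\mathbf{e}\circ\mathbf{D}\leq\mathbf{D}$ (put $z=b$, resp. $z=a$, in the infimum defining the composition and use $\mathbf{e}(x,x)=0$), so that $\mathbf{e}$-invariance comes down to proving the reverse inequalities $\mathbf{D}(a,b)\leq\mathbf{D}(a,z)+\mathbf{e}(z,b)$ and $\mathbf{D}(a,b)\leq\mathbf{e}(a,z)+\mathbf{D}(z,b)$ for all $z$; similarly the distance property $(\triangle)$ just asks for $\mathbf{D}(a,c)\leq\mathbf{D}(a,b)+\mathbf{D}(b,c)$ for all $a,b,c$.

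For $\mathbf{d}$, all three required inequalities will follow by taking norms in algebraic identities read in $\widetilde{A}$ (or in $A^{**}$, since $1\notin A$ in general), using that $a,b,c\in A^1_+$ forces $\norm{a}\leq1$ and $0\leq 1-c\leq 1$, hence $\norm{1-c}\leq1$ (and likewise $\norm{1-b}\leq1$). From $a-ab=(a-az)+a(z-b)$ I get $\mathbf{d}(a,b)\leq\mathbf{d}(a,z)+\mathbf{e}(z,b)$; from $a-ab=(a-z)(1-b)+(z-zb)$ I get $\mathbf{d}(a,b)\leq\mathbf{e}(a,z)+\mathbf{d}(z,b)$; and from
\[a-ac=(a-ab)(1-c)+a(b-bc)\]
I get $\mathbf{d}(a,c)\leq\mathbf{d}(a,b)+\mathbf{d}(b,c)$. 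Each identity is verified by multiplying out the right-hand side, and together they yield $\mathbf{e}$-invariance and $(\triangle)$.

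For $\mathbf{h}$, the one step that is not purely formal is the reformulation
\[\mathbf{h}(a,b)=\min\{t\in[0,\infty):a\leq b+t1\},\]
which holds because, for self-adjoint $x$ and $t\geq0$, one has $\norm{x_+}\leq t$ iff $x_+\leq t1$ iff $x\leq t1$ (the forward implication via $x\leq x_+$, the backward one via continuous functional calculus, as $x\leq t1$ forces $\sigma(x)\subseteq(-\infty,t]$). Granting this, $(\triangle)$ is transparent: $a\leq b+s1$ and $b\leq c+t1$ give $a\leq c+(s+t)1$. For $\mathbf{e}$-invariance, combining $a\leq z+\mathbf{h}(a,z)1$ with $z\leq b+\norm{z-b}1$ (valid since $z-b$ is self-adjoint of norm at most $\norm{z-b}$) gives $a\leq b+(\mathbf{h}(a,z)+\norm{z-b})1$, i.e. $\mathbf{h}(a,b)\leq\mathbf{h}(a,z)+\mathbf{e}(z,b)$; symmetrically, combining $a\leq z+\norm{a-z}1$ with $z\leq b+\mathbf{h}(z,b)1$ gives $\mathbf{h}(a,b)\leq\mathbf{e}(a,z)+\mathbf{h}(z,b)$.

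I do not expect a genuine obstacle; the work is entirely in choosing the right identities and keeping the bookkeeping honest. The two points worth flagging are that the identities and order inequalities for $\mathbf{d}$ must be interpreted in $\widetilde{A}$, and that the triangle inequality for $\mathbf{d}$ really does use \emph{both} $\norm{a}\leq1$ and $0\leq c\leq1$ — so it is a statement about $A^1_+$ and not about $A_+$ at large.
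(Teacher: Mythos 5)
Your proposal is correct. For $\mathbf{d}$ it is essentially the paper's argument in different clothing: the paper writes $\mathbf{d}(a,b)=\norm{a(c^\perp+c)b^\perp}$ and splits, which is exactly your identity $a-ac=(a-ab)(1-c)+a(b-bc)$ after relabelling, and your two $\mathbf{e}$-invariance identities are the same two estimates the paper uses, with the same reliance on $\norm{a}\leq1$ and $\norm{1-b}\leq1$ (so genuinely a statement about $A^1_+$, as you note). For $\mathbf{h}$ you take a genuinely different route: the paper invokes the quasistate formula $\norm{x_+}=\sup_{\phi\in\mathsf{Q}}\phi(x)$ and deduces both the triangle inequality and $\mathbf{h}\leq\mathbf{e}$ from it, precisely to sidestep the false noncommutative inequality $(x+y)_+\leq x_++y_+$; your reformulation $\mathbf{h}(a,b)=\min\{t\geq0:a\leq b+t1\}$, justified by the correct equivalence $\norm{x_+}\leq t\Leftrightarrow x\leq t1$ (via $x\leq x_+$ one way and $\sigma(x)\subseteq(-\infty,t]$ the other), avoids that pitfall just as cleanly and is more elementary, needing only functional calculus in $\widetilde{A}$ rather than states. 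The trade-off is that the paper's display \eqref{||a+||} is not merely a tool here but is reused repeatedly later (in the proofs of \eqref{d2<hd}, \autoref{h=p} and \autoref{h=n}), so the quasistate route earns its keep downstream, whereas your order-theoretic characterization is self-contained but would have to be supplemented by \eqref{||a+||} anyway for those later arguments. Both proofs are complete and correct.
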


\begin{proof}\
\begin{itemize}
\item[\eqref{ddist}] For $a,b,c\in A^1_+$, we have $\norm{a},\norm{b^\perp}\leq1$, where $b^\perp=1-b(\in\widetilde{A}$), so
\[\mathbf{d}(a,b)=\norm{ab^\perp}=\norm{a(c^\perp+c)b^\perp}\leq\norm{ac^\perp}\norm{b^\perp}+\norm{a}\norm{cb^\perp}\leq\mathbf{d}(a,c)+\mathbf{d}(c,b).\]
As $\mathbf{e}$ quantifies equality, we immediately have $\mathbf{d}\circ\mathbf{e},\mathbf{e}\circ\mathbf{d}\leq\mathbf{d}$.  Conversely,
\begin{align*}
\mathbf{d}(a,b)&=\norm{a-ab}\leq\norm{a-c}+\norm{c-cb}=\mathbf{e}(a,c)+\mathbf{d}(c,b).\\
\mathbf{d}(a,b)&=\norm{a-ab}\leq\norm{a-ac}+\norm{ac-ab}\leq\mathbf{d}(a,c)+\mathbf{e}(c,b).
\end{align*}

\item[\eqref{hdist}]  Denote the space of quasistates on $A$ by $\mathsf{Q}=(A^{*1}_+=$ positive linear functionals in the dual unit ball$)$ and recall that, for $a\in A_\mathrm{sa}$,
\begin{equation}\label{||a+||}
\norm{a_+}=\sup_{\phi\in\mathsf{Q}}\phi(a).
\end{equation}
Thus for all $a,b,c\in A_{sa}$, we have $\mathbf{h}(a,b)\leq\sup\limits_{\phi\in\mathsf{Q}}\phi(a-c)+\sup\limits_{\phi\in\mathsf{Q}}\phi(c-b)=\mathbf{h}(a,c)+\mathbf{h}(c,b)$.\footnote{One might naively use $(a+b)_+\leq a_++b_+$ instead, but this only holds for commutative $A$.}  Now as $\norm{a_+}\leq\norm{a}$, we have $\mathbf{h}\leq\mathbf{e}$ so $\mathbf{h}\leq\mathbf{h}\circ\mathbf{h}\leq\mathbf{h}\circ\mathbf{e},\mathbf{e}\circ\mathbf{h}$.  But the reverse inequalities are  again immediate, as $\mathbf{e}$ quantifies equality.\qedhere
\end{itemize}
\end{proof}

Basic relationships between $\mathrm{C}^*$-algebra distances reveal aspects of $\mathrm{C}^*$-algebraic structure.  Here are some required for our investigation of $\mathrm{C}^*$-algebra filters.

\begin{prp}
On $A^1_+$,
\begin{align}
\label{h<2d}\mathbf{h}\ &\leq\ 2\mathbf{d}.\\
\label{d2<hd}\mathbf{d}^2\ &\leq\ \mathbf{d}\circ\mathbf{h}.\\
\label{d2<dh}\mathbf{d}^2\ &\leq\ \mathbf{h}\circ\mathbf{d}.
\end{align}
In fact, in \eqref{d2<hd} and \eqref{d2<dh}, we can even take $\circ$ in $A_\mathrm{sa}$.
\end{prp}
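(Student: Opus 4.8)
The plan is to prove \eqref{h<2d} by a single operator inequality and to handle \eqref{d2<hd}--\eqref{d2<dh} by a common ``square-lowering'' template. I would stress at the outset that, because the right-hand sides of \eqref{d2<hd} and \eqref{d2<dh} are infima over the intermediate point $c$, proving e.g.\ $\mathbf{d}^2\leq\mathbf{d}\circ\mathbf{h}$ means establishing
\[\mathbf{d}(a,b)^2\leq\mathbf{d}(a,c)+\mathbf{h}(c,b)\]
for \emph{every} admissible $c$ — a lower bound on the infimum, not the production of one good witness. The strengthening ``$\circ$ in $A_\mathrm{sa}$'' then just allows $c$ to range over all of $A_\mathrm{sa}$, which only makes the claim harder, so it suffices to prove the displayed inequality for arbitrary $c\in A_\mathrm{sa}$, reading $\mathbf{d}(a,c)=\norm{a-ac}=\norm{ac^\perp}$ and $\mathbf{h}(c,b)=\norm{(c-b)_+}$ with $c^\perp=1-c$.

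For \eqref{h<2d} I would first record the identity
\[a+b-ab-ba=a(1-a)+b(1-b)+(a-b)^2,\]
whose three summands are all positive (the first two are products of commuting positives, the last a square of a self-adjoint). Hence $a-b\leq 2a-ab-ba=ab^\perp+b^\perp a$. Applying the order-preserving functional $x\mapsto\norm{x_+}=\sup_{\phi\in\mathsf{Q}}\phi(x)$ from \eqref{||a+||} and then bounding the positive part by the norm yields
\[\mathbf{h}(a,b)=\norm{(a-b)_+}\leq\norm{(ab^\perp+b^\perp a)_+}\leq\norm{ab^\perp+b^\perp a}\leq2\norm{ab^\perp}=2\mathbf{d}(a,b).\]

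For \eqref{d2<hd} and \eqref{d2<dh} the common device is that $a\in A^1_+$ forces $a^2\leq a$ and $(b^\perp)^2\leq b^\perp$, which lets me rewrite the square $\mathbf{d}(a,b)^2$ as the norm of a genuinely positive element and then split it linearly. For \eqref{d2<hd} I would keep $a$ on the outside, writing $\mathbf{d}(a,b)^2=\norm{a(b^\perp)^2a}\leq\norm{ab^\perp a}$, substitute $b^\perp=c^\perp+(c-b)\leq c^\perp+(c-b)_+$ after conjugating by $a$, and test against states: since $\phi(z)\leq\norm{z}$ for self-adjoint $z$, the two pieces are controlled by $\norm{ac^\perp a}\leq\norm{ac^\perp}=\mathbf{d}(a,c)$ and $\norm{a(c-b)_+a}\leq\norm{(c-b)_+}=\mathbf{h}(c,b)$. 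For \eqref{d2<dh} I would instead keep $b^\perp$ on the outside, writing $\mathbf{d}(a,b)^2=\norm{b^\perp a^2b^\perp}\leq\norm{b^\perp ab^\perp}$, decompose $a\leq c+(a-c)_+$, and conjugate by $b^\perp$, so the pieces are bounded by $\norm{b^\perp cb^\perp}\leq\norm{cb^\perp}=\mathbf{d}(c,b)$ and $\norm{b^\perp(a-c)_+b^\perp}\leq\norm{(a-c)_+}=\mathbf{h}(a,c)$. Since $c$ was arbitrary in $A_\mathrm{sa}$, this gives the strengthened form.

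I expect the main obstacle to be \eqref{h<2d}. The naive attempt, bounding $\phi(a-b)=\phi(ab^\perp)-\phi(a^\perp b)$ directly, produces $\mathbf{d}(a,b)+\mathbf{d}(b,a)$ rather than $2\mathbf{d}(a,b)$, and so fails precisely because $\mathbf{d}$ is non-symmetric. The remedy is the identity above, which symmetrizes $ab^\perp$ into $\tfrac12(ab^\perp+b^\perp a)$ at the cost of the error term $a(1-a)+b(1-b)+(a-b)^2$; recognizing that this error is positive is the crux. By contrast, the only real subtlety in \eqref{d2<hd}--\eqref{d2<dh} is conceptual — reading the infimum as a universal statement over $c$ — after which the square-lowering inequalities $a^2\leq a$ and $(b^\perp)^2\leq b^\perp$ reduce everything to routine conjugation estimates.
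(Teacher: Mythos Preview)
Your argument is correct. For \eqref{d2<hd} and \eqref{d2<dh} you do essentially what the paper does: the paper also passes from $\norm{ab^{\perp 2}a}$ (resp.\ $\norm{b^\perp a^2 b^\perp}$) to $\norm{ab^\perp a}$ (resp.\ $\norm{b^\perp a b^\perp}$), splits $b^\perp=c^\perp+(c-b)$, and uses the subadditivity $\norm{(x+y)_+}\leq\norm{x_+}+\norm{y_+}$ together with the contraction $\norm{(aza)_+}\leq\norm{z_+}$ to isolate $\mathbf{d}(a,c)$ and $\mathbf{h}(c,b)$. Your use of the order inequality $b^\perp\leq c^\perp+(c-b)_+$ before conjugating is a cosmetic variant of the same computation.

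For \eqref{h<2d} your route is genuinely different. The paper inserts the intermediate point $bab$ and uses the triangle inequality for $\mathbf{h}$: since $bab\leq b$, one has $\mathbf{h}(a,b)\leq\mathbf{h}(a,bab)=\norm{a-bab}\leq\norm{a-ab}+\norm{ab-bab}\leq 2\mathbf{d}(a,b)$. Your approach instead exhibits the operator inequality $a-b\leq ab^\perp+b^\perp a$ directly, via the identity $a+b-ab-ba=a(1-a)+b(1-b)+(a-b)^2\geq 0$, and then reads off $\mathbf{h}(a,b)\leq\norm{ab^\perp+b^\perp a}\leq 2\norm{ab^\perp}$. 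Your argument is self-contained and does not invoke the triangle inequality for $\mathbf{h}$; the paper's argument is perhaps more in the spirit of the distance framework, treating $\mathbf{h}$ as a distance and interpolating. Both are short and neither clearly dominates the other.
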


\begin{proof}\
\begin{itemize}
\item[\eqref{h<2d}]  For $a,b\in A^1_+$, $bab\leq b^2\leq b$ so
\begin{align*}
\mathbf{h}(a,b)&\leq\mathbf{h}(a,bab)+\mathbf{h}(bab,b)\\
&=\norm{a-bab}\\
&\leq\norm{a-ab}+\norm{ab-bab}\\
&\leq\norm{a-ab}+\norm{a-ba}\norm{b}\\
&\leq2\mathbf{d}(a,b).
\end{align*}

\item[\eqref{d2<hd}]  First note that, for any $a\in A^1$ and $b\in A_\mathrm{sa}$,
\begin{equation}\label{aba+}
\norm{(aba)_+}=\inf_{aba\leq c}\norm{c}\leq\inf_{b\leq c}\norm{aca}\leq\norm{ab_+a}\leq\norm{b_+}.
\end{equation}
As $\norm{(a+b)_+}\leq\norm{a_+}+\norm{b_+}$ (see \eqref{||a+||}), for all $a,b\in A^1_+$ and $c\in A_\mathrm{sa}$,
\begin{align*}
\mathbf{d}(a,b)^2&=\norm{ab^{\perp2}a}\\
&\leq\norm{ab^\perp a}=\norm{(ab^\perp a)_+}\\
&\leq\norm{(ac^\perp a)_+}+\norm{(a(b^\perp-c^\perp)a)_+}\\
&\leq\norm{ac^\perp}\norm{a}+\norm{(c-b)_+}\\
&\leq\mathbf{d}(a,c)+\mathbf{h}(c,b).
\end{align*}

\item[\eqref{d2<dh}]  Likewise, for $a,b\in A^1_+$ and $c\in A_\mathrm{sa}$,
\begin{align*}
\mathbf{d}(a,b)^2&=\norm{b^\perp a^2b^\perp}\\
&\leq\norm{b^\perp ab^\perp}=\norm{(b^\perp ab^\perp)_+}\\
&\leq\norm{(b^\perp(a-c)b^\perp)_+}+\norm{(b^\perp cb^\perp)_+}\\
&\leq\norm{(a-c)_+}+\norm{b^\perp}\norm{cb^\perp}\\
&\leq\mathbf{h}(a,c)+\mathbf{d}(c,b).\qedhere
\end{align*}
\end{itemize}
\end{proof}

Another important quantification of $\leq$ on $A_+$ is given by
\[\mathbf{p}(a,b)=\inf_{0\leq c\leq b}\norm{a-c}.\]
Often $\mathbf{p}(a,b)=\mathbf{h}(a,b)$, e.g. if $ab=ba$, $a\leq b$, $b\leq a$ or if $a$ and $b$ are projections.  However, $\mathbf{p}$ and $\mathbf{h}$ do not coincide in general.

\begin{prp}
On $M_{2+}$, $\mathbf{p}\neq\mathbf{h}$.  In fact, $\mathbf{p}$ is not even a distance.
\end{prp}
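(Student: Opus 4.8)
The plan is to exhibit explicit $2\times 2$ positive matrices on which $\mathbf{p}$ and $\mathbf{h}$ disagree, and where $\mathbf{p}$ fails the triangle inequality \eqref{tri}. First I would recall that for $a,b\in A_{\mathrm{sa}}$ we always have $\mathbf{h}(a,b)\leq\mathbf{p}(a,b)$: indeed if $0\leq c\leq b$ then $(a-b)_+\leq(a-c)_+$, so $\norm{(a-b)_+}\leq\norm{(a-c)_+}\leq\norm{a-c}$, and taking the infimum over such $c$ gives $\mathbf{h}\leq\mathbf{p}$. So to separate them it suffices to find $a,b\in M_{2+}$ with $\mathbf{p}(a,b)$ strictly larger than $\mathbf{h}(a,b)$. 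The natural test case is to take $b$ a rank-one projection and $a$ a rank-one projection (or positive matrix) whose range is ``transverse'' to that of $b$, so that the only positive $c$ below $b$ are scalar multiples $tb$ with $t\in[0,1]$, forcing $\mathbf{p}(a,b)=\inf_{t\in[0,1]}\norm{a-tb}$, which one computes directly; meanwhile $\mathbf{h}(a,b)=\norm{(a-b)_+}$ can be computed from the eigenvalues of the $2\times 2$ self-adjoint matrix $a-b$.

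For concreteness I would try $b=\left(\begin{smallmatrix}1&0\\0&0\end{smallmatrix}\right)$ and $a$ the rank-one projection onto a line at angle $\theta$, i.e.\ $a=\left(\begin{smallmatrix}\cos^2\theta&\cos\theta\sin\theta\\ \cos\theta\sin\theta&\sin^2\theta\end{smallmatrix}\right)$, say $\theta=\pi/4$ so $a=\tfrac12\left(\begin{smallmatrix}1&1\\1&1\end{smallmatrix}\right)$. Then $a-b=\tfrac12\left(\begin{smallmatrix}-1&1\\1&1\end{smallmatrix}\right)$ has eigenvalues $\pm\tfrac{1}{\sqrt2}$, so $\mathbf{h}(a,b)=\tfrac{1}{\sqrt2}$. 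On the other hand $\{c: 0\le c\le b\}=\{tb: 0\le t\le 1\}$, and $a-tb=\left(\begin{smallmatrix}\frac12-t&\frac12\\ \frac12&\frac12\end{smallmatrix}\right)$, whose norm I minimize over $t\in[0,1]$; a short computation of the larger eigenvalue of $(a-tb)^2$ shows the minimum exceeds $\tfrac1{\sqrt2}$ (the symmetric choice $t=\tfrac12$ already gives norm $\tfrac12\sqrt{1+\sqrt2\,}\cdot$something; I would just pin down the exact minimizer and value). This establishes $\mathbf{p}(a,b)>\mathbf{h}(a,b)$, hence $\mathbf{p}\ne\mathbf{h}$ on $M_{2+}$.

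For the second assertion, that $\mathbf{p}$ is not a distance, I would exhibit $a,b,z\in M_{2+}$ violating $\mathbf{p}(a,b)\le\mathbf{p}(a,z)+\mathbf{p}(z,b)$. The mechanism: $\mathbf{p}(x,y)$ can be small when $y$ dominates a large chunk of $x$ even if $y$ has small norm in other directions, but this ``domination'' is not transitive because the set of positive elements below $y$ is geometrically constrained by the full operator $y$, not just its range. A clean choice is to take $z$ with large norm (so $\mathbf{p}(\cdot,z)$ is small for both $a$ and $b$) while $a$ and $b$ themselves are positioned so that no small-norm perturbation of $a$ lies below $b$. For instance, with $b$ a rank-one projection as above and $a$ chosen so $\mathbf{p}(a,b)$ is bounded below by a definite constant (as in the first part), I would pick $z=\lambda 1$ for large $\lambda$; then $\mathbf{p}(a,z)=0=\mathbf{p}(z,b)$ is false since $b$ is small, so instead I would take $z$ rank-one but ``between'' the relevant directions, or more simply exploit that $\mathbf{p}(z,b)$ forces $z$ close to the order-ideal below $b$. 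I expect the cleanest counterexample uses $a$ close to but not below some $b$, and $z$ a large multiple of a projection containing both ranges.

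The main obstacle is not conceptual but computational: verifying the strict inequalities requires honestly minimizing the operator norm $\norm{a-tb}$ of a one-parameter family of $2\times2$ self-adjoint matrices, i.e.\ minimizing $\tfrac12\bigl(\mathrm{tr}\,M + \sqrt{(\mathrm{tr}\,M)^2-4\det M}\,\bigr)$ over $t$, and then comparing with the eigenvalue computation for $\mathbf{h}$; for the triangle-inequality failure one must similarly pin down three such optimizations and check the numbers don't add up. I would streamline this by choosing all matrices with rational or simple algebraic entries and, where possible, reducing to the case of projections (where both $\mathbf{p}$ and $\mathbf{h}$ have closed-form descriptions in terms of the angle between ranges) so that the estimates become a single inequality between explicit algebraic numbers.
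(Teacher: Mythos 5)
There is a genuine gap, and it is in the one place where the proof has to produce actual numbers. Your proposed witnesses for $\mathbf{p}\neq\mathbf{h}$ are two rank-one \emph{projections}, and for such a pair the two quantities always agree: with $b=\left(\begin{smallmatrix}1&0\\0&0\end{smallmatrix}\right)$ and $a=\tfrac12\left(\begin{smallmatrix}1&1\\1&1\end{smallmatrix}\right)$ one has $\norm{a-tb}=\tfrac12\bigl((1-t)+\sqrt{1+t^2}\bigr)$, which is decreasing on $[0,1]$, so the infimum is attained at $t=1$ (i.e.\ $c=b$) and equals $1/\sqrt2=\mathbf{h}(a,b)$; your claim that ``the minimum exceeds $1/\sqrt2$'' is false. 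This is not bad luck with one example: for any two projections $p,q$ one has $\mathbf{h}(p,q)\leq\mathbf{p}(p,q)\leq\norm{p-q}$ and (in $M_2$, where $p-q$ has eigenvalues $\pm\sin\theta$) $\norm{(p-q)_+}=\norm{p-q}$, so $\mathbf{p}=\mathbf{h}$ on projections \textendash\, the paper even lists this among the cases where the two coincide, and later proves $\mathbf{d}=\mathbf{h}=\mathbf{p}=\mathbf{n}$ on $\mathcal{P}$. To separate them you must leave the projections; the paper takes differently scaled multiples of projections, $a=\left(\begin{smallmatrix}1&1\\1&1\end{smallmatrix}\right)$ and $b=\left(\begin{smallmatrix}4&0\\0&0\end{smallmatrix}\right)$, where the same one-parameter minimization gives $\mathbf{p}(a,b)=\sqrt2$ while $\mathbf{h}(a,b)=\sqrt5-1$. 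A secondary flaw: your justification of $\mathbf{h}\leq\mathbf{p}$ via the operator inequality $(a-b)_+\leq(a-c)_+$ is invalid, since $t\mapsto t_+$ is not operator monotone (this already fails in $M_2$); the norm inequality is nevertheless true, but the correct route is $\norm{x_+}=\sup_{\phi\in\mathsf{Q}}\phi(x)$, as in the paper.

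The second assertion, that $\mathbf{p}$ is not a distance, is not proved at all in your proposal: you offer several candidate constructions, discard one yourself ($z=\lambda 1$ makes $\mathbf{p}(z,b)$ large, not small), and end with ``I expect the cleanest counterexample uses\dots'' without verifying anything. The paper's point here is that no new search is needed: once you have any $a,b$ with $\mathbf{h}(a,b)<\mathbf{p}(a,b)$, set $c=a+(b-a)_+$. Then $a\leq c$ gives $\mathbf{p}(a,c)=0$, and $b\leq c$ forces $\mathbf{p}(c,b)=\norm{c-b}=\norm{(a-b)_+}=\mathbf{h}(a,b)$, so $\mathbf{p}(a,c)+\mathbf{p}(c,b)=\mathbf{h}(a,b)<\mathbf{p}(a,b)$ and the triangle inequality fails through $c$. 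Incorporating this reduction (and replacing your projection pair by a genuinely separating pair) would repair the argument.
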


\begin{proof}
Let $a=\begin{bmatrix}1&1\\1&1\end{bmatrix}$ and $b=\begin{bmatrix}4&0\\0&0\end{bmatrix}$ so $a-tb=\begin{bmatrix}1-4t&1\\1&1\end{bmatrix}$ and
\[\mathrm{det}(a-tb-\lambda)=(1-4t-\lambda)(1-\lambda)-1=\lambda^2+(4t-2)\lambda-4t.\]
So $(a-tb)$ has eigenvalues $1-2t\pm\sqrt{4t^2+1})$ and hence
\[\norm{a-tb}=\begin{cases}1-2t+\sqrt{4t^2+1} &\text{for }t\leq\frac{1}{2}\\ 1-2t-\sqrt{4t^2+1} &\text{for }t\geq\frac{1}{2}\end{cases}.\]
Thus $\mathbf{p}(a,b)=\inf_{t\in[0,1]}\norm{a-tb}=\norm{a-\frac{1}{2}b}=\sqrt{2}$.  On the other hand, $\mathbf{h}(a,b)$ is the positive eigenvalue for $t=1$, i.e., $\sqrt{5}-1$ so
\[\mathbf{h}(a,b)<\mathbf{p}(a,b).\]
Let $c=a+(b-a)_+$ so $\mathbf{p}(a,c)=0$ and $\mathbf{p}(c,b)=\norm{c-b}=\norm{(a-b)_+}=\sqrt{5}-1$, as $a,b\leq c$.  Thus
\[\mathbf{p}(a,b)>\mathbf{p}(a,c)+\mathbf{p}(c,b).\qedhere\]
\end{proof}

Even when two metrics differ, it often suffices to show they are uniformly equivalent.  If $\mathbf F$ and $\mathbf G$ are functions $\mathbf{F},\mathbf{G}:X\rightarrow[0,\infty]$ we define
\[\mathbf{F}\precapprox\mathbf{G}\qquad\Leftrightarrow\qquad0=\lim_{r\rightarrow0}\sup_{\mathbf{G}(x)\leq r}\mathbf{F}(x).\]
Equivalently, $\mathbf{F}\precapprox\mathbf{G}$ if and only if, for all $Y\subseteq X$,
\[\inf_{y\in Y}\mathbf{G}(y)=0\quad\Rightarrow\quad\inf_{y\in Y}\mathbf{F}(y)=0.\]
We call $\mathbf{F}$ and $\mathbf{G}$ \emph{uniformly equivalent}, written $\mathbf{F}\approx\mathbf{G}$, when $\mathbf{F}\precapprox\mathbf{G}\precapprox\mathbf{F}$.

\begin{thm}\label{h=p}
On $A_+^1\times A_+^1$, $\mathbf{h}\leq\mathbf{p}\precapprox\mathbf{h}$, so $\mathbf{h}$ is uniformly equivalent to $\mathbf{p}$. 
\end{thm}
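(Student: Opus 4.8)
The inequality $\mathbf{h}\leq\mathbf{p}$ is essentially immediate: if $0\leq c\leq b$ then $(a-b)_+\leq(a-c)_+$ in $A_\mathrm{sa}$ (since $a-b\leq a-c$), so $\mathbf{h}(a,b)=\norm{(a-b)_+}\leq\norm{(a-c)_+}\leq\norm{a-c}$ by \eqref{hdist}, and taking the infimum over such $c$ gives $\mathbf{h}(a,b)\leq\mathbf{p}(a,b)$. Note this uses $a\in A_+$ only insofar as we need $a-c$ self-adjoint, which holds since $a,c\in A_\mathrm{sa}$. So the entire content of the theorem is the reverse uniform domination $\mathbf{p}\precapprox\mathbf{h}$.

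For $\mathbf{p}\precapprox\mathbf{h}$, I want to show that if $\mathbf{h}(a,b)=\norm{(a-b)_+}$ is small then there is some $c$ with $0\leq c\leq b$ and $\norm{a-c}$ small, with a bound on the latter depending only on the former (uniformly over $a,b\in A^1_+$). The natural candidate is a "truncation" of $a$ against $b$. The obvious first guess $c=b\wedge a$ doesn't make sense non-commutatively, and $c=a-(a-b)_+$ need not be $\leq b$ when $a,b$ don't commute — indeed that is precisely why $\mathbf{p}\neq\mathbf{h}$. Instead I would try $c=a^{1/2}f(b)a^{1/2}$ or, more promisingly, compress using $b$: consider $c=b^{1/2}g(a,b)b^{1/2}$-type expressions, or simplest of all, work with $c=bab$ and its relatives. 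Recall from the proof of \eqref{h<2d} that $bab\leq b^2\leq b$ for $a,b\in A^1_+$, so $c=bab$ automatically satisfies $0\leq c\leq b$, giving $\mathbf{p}(a,b)\leq\norm{a-bab}\leq 2\mathbf{d}(a,b)$. Combined with \eqref{d2<dh} (or \eqref{d2<hd}), $\mathbf{d}(a,b)^2\leq\mathbf{h}\circ\mathbf{d}(a,b)$ on $A_\mathrm{sa}$, one sees $\mathbf{d}$ is controlled by $\mathbf{h}$ in the uniform sense... but this route risks circularity and in any case $\mathbf{d}$ and $\mathbf{h}$ are not uniformly equivalent on all of $A^1_+$, so one cannot simply chain these. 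The cleaner approach is direct: set $\varepsilon=\mathbf{h}(a,b)$, so $a\leq b+\varepsilon$ in $A_\mathrm{sa}$, and choose $c=f_\varepsilon(b)^{1/2}\,a\,f_\varepsilon(b)^{1/2}$ where $f_\varepsilon$ is a continuous function with $0\leq f_\varepsilon\leq 1$, $f_\varepsilon(t)=1$ for $t\geq\sqrt\varepsilon$ (say) and $f_\varepsilon(0)=0$, chosen so that $t f_\varepsilon(t)\leq t$ forces $f_\varepsilon(b)^{1/2}bf_\varepsilon(b)^{1/2}\leq b$ while on the "large" part of the spectrum $f_\varepsilon(b)\approx 1$.

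More concretely, here is the estimate I expect to run: with $\varepsilon=\mathbf{h}(a,b)$ and $f=f_\varepsilon$ as above, put $c=f(b)^{1/2}af(b)^{1/2}$. Then $0\leq c$ and $c=f(b)^{1/2}af(b)^{1/2}\leq f(b)^{1/2}(b+\varepsilon)f(b)^{1/2}=f(b)^{1/2}bf(b)^{1/2}+\varepsilon f(b)\leq b+\varepsilon f(b)$. This is not quite $\leq b$, so I instead scale: take $c'=(1+\varepsilon)^{-1}c$ or absorb the defect by choosing $f$ supported where $b$ is bounded below, say replace $c$ by $g(b)ag(b)$ with $g(t)=\min(1,\sqrt{\varepsilon}/t)\cdot(\text{cutoff})$ so that $g(b)bg(b)\leq\sqrt\varepsilon$ is negligible and more importantly $g(b)^2 b\leq b$ trivially; the point is to arrange $0\leq c\leq b$ on the nose. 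Then I bound $\norm{a-c}=\norm{a-g(b)ag(b)}\leq\norm{(1-g(b))a}+\norm{g(b)a(1-g(b))}\leq 2\norm{(1-g(b))a}$, and estimate $\norm{(1-g(b))a}$ by squaring: $\norm{(1-g(b))a}^2=\norm{(1-g(b))a^2(1-g(b))}\leq\norm{(1-g(b))a(1-g(b))}\leq\norm{(1-g(b))(b+\varepsilon)(1-g(b))}\leq\norm{(1-g(b))b(1-g(b))}+\varepsilon$. On the spectrum of $b$, the scalar function $(1-g(t))^2 t$ is supported near $0$ (where $g$ is cut off) and is $O(\sqrt\varepsilon)$ there, so $\norm{(1-g(b))b(1-g(b))}=O(\sqrt\varepsilon)$, giving $\norm{a-c}=O(\varepsilon^{1/4})\to 0$ as $\varepsilon\to 0$. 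Since the implied constants are absolute, this yields $\mathbf{p}\precapprox\mathbf{h}$.

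**Main obstacle.** The delicate point is engineering the single scalar function $g$ (depending only on $\varepsilon$) that simultaneously guarantees the exact operator inequality $0\leq g(b)ag(b)\leq b$ and makes $\norm{(1-g(b))b(1-g(b))}$ small — the tension is that $g$ near $1$ helps the second goal but the correction term $\varepsilon g(b)$ in $g(b)^{1/2}bg(b)^{1/2}\leq b+\varepsilon g(b)$ then obstructs the first, forcing either a rescaling by $(1+\varepsilon)^{-1}$ (harmless, contributes $O(\varepsilon)$ to $\norm{a-c}$) or a more careful choice of $g$ with a small extra cutoff. Getting the bookkeeping of these two competing requirements right — and verifying the claimed functional-calculus scalar estimates like $\sup_t(1-g(t))^2 t = O(\sqrt\varepsilon)$ — is the crux; everything else is the routine manipulation of $\norm{xy}\leq\norm{x}\norm{y}$, the identity $\norm{x}^2=\norm{x^*x}$, and the already-established $a\leq b+\mathbf{h}(a,b)$.
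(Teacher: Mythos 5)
Your route to $\mathbf{p}\precapprox\mathbf{h}$ is genuinely different from the paper's: the paper sets $z=b+(a-b)_+$, takes the Pedersen-style limit $u=\lim\sqrt{a}(\tfrac{1}{n}+z)^{-\frac{1}{2}}\sqrt{b}$ (via \cite[Lemma 1.4.4]{Pedersen1979}), and shows $u^*u\leq b$ with $\norm{a-u^*u}\leq2O(\mathbf{h}(a,b))$, whereas you compress $a$ by a scalar function of $b$. Your self-identified crux can in fact be closed cleanly, and with a better modulus than you predicted: put $\varepsilon=\mathbf{h}(a,b)$ (for $\varepsilon=0$ take $c=a$) and $g(t)=\sqrt{t/(t+\varepsilon)}$, so $0\leq g\leq1$, $g(0)=0$, hence $g(b)\in A$. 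Since $a-b\leq(a-b)_+\leq\varepsilon1$ in $\widetilde{A}$, the element $c=g(b)ag(b)\in A_+$ satisfies $c\leq g(b)(b+\varepsilon)g(b)=b$ \emph{exactly}, no rescaling needed, while $1-g(t)\leq\varepsilon/(t+\varepsilon)$ gives $\norm{(1-g(b))a}^2=\norm{(1-g(b))a^2(1-g(b))}\leq\norm{(1-g(b))(b+\varepsilon)(1-g(b))}\leq\sup_{t\in[0,1]}\varepsilon^2/(t+\varepsilon)\leq\varepsilon$, whence $\mathbf{p}(a,b)\leq\norm{a-c}\leq\norm{(1-g(b))a}+\norm{g(b)}\norm{a(1-g(b))}\leq2\sqrt{\varepsilon}$. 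So your approach yields the explicit estimate $\mathbf{p}\leq2\sqrt{\mathbf{h}}$ on $A^1_+$, which is at least as sharp as the paper's bound and arguably more elementary.

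Two details are wrong as written, though both are repairable. First, in the easy direction you justify $\norm{(a-b)_+}\leq\norm{(a-c)_+}$ by claiming $a-b\leq a-c$ implies $(a-b)_+\leq(a-c)_+$; the positive part is not operator monotone, so this operator inequality fails in general (already in $M_2$). What is true, and all you need, is that $x\mapsto\norm{x_+}$ is monotone on $A_\mathrm{sa}$, since by \eqref{||a+||} $\norm{x_+}=\sup_{\phi\in\mathsf{Q}}\phi(x)$; this is exactly how the paper obtains $\mathbf{h}\leq\mathbf{p}$ (note also that only $c\leq b$ is used there, not $0\leq c$). Second, your fallback rescaling by $(1+\varepsilon)^{-1}$ does not restore $c\leq b$: with a plateau cutoff $f$ one only gets $f(b)^{1/2}af(b)^{1/2}\leq b+\varepsilon f(b)$, and $(1+\varepsilon)^{-1}(b+\varepsilon)\leq b$ would force $b\geq1$. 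If you insist on $f=1$ above $\sqrt{\varepsilon}$ with $f(t)\leq t/\sqrt{\varepsilon}$, the defect is $\varepsilon f(b)\leq\sqrt{\varepsilon}\,b$, so one must rescale by $(1+\sqrt{\varepsilon})^{-1}$, which still works and gives your $O(\varepsilon^{1/4})$ bound; also the formula $g(t)=\min(1,\sqrt{\varepsilon}/t)$ has the wrong shape (it equals $1$ near $0$). With the choice of $g$ above these issues disappear, so your proposal is correct in outline and genuinely distinct from the paper's argument, but it needed these repairs at precisely the points you flagged as the crux.
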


\begin{proof}
First we show that $\mathbf{h}\leq\mathbf{p}$ on $A^1_+$.  For any $c\leq b$, \eqref{||a+||} yields
\[\mathbf{h}(a,b)=\sup_{\phi\in\mathsf{Q}}\phi(a-b)\leq\sup_{\phi\in\mathsf{Q}}\phi(a-c)\leq\norm{a-c}.\]
Also $a-b\leq(a-b)_+$ so $a-(a-b)_+\leq b$ and $\mathbf{h}(a,b)=\norm{a-(a-(a-b)_+)}$ so
\[\mathbf{h}(a,b)=\inf_{c\leq b}\norm{a-c}\leq\inf_{0\leq c\leq b}\norm{a-c}=\mathbf{p}(a,b).\]

Next we need to show that
\[\forall\epsilon>0\quad\exists\delta>0\quad\forall a,b\in A^1_+\quad\mathbf{h}(a,b)<\delta\ \Rightarrow\ \mathbf{p}(a,b)<\epsilon.\]
Take $a,b\in A^1_+$ and let $z=b+(a-b)_+$ so $a,b\leq z\in A_+$ and
\[\sqrt{a}(\tfrac{1}{n}+z)^{-\frac{1}{2}}\sqrt{b}\rightarrow u,\]
for some $u\in A$, by \cite[Lemma 1.4.4]{Pedersen1979}.  As $\sqrt{a}(\frac{1}{n}+z)^{-\frac{1}{2}}\sqrt{z}\rightarrow\sqrt{a}$, we have
\[\sqrt{a}(\tfrac{1}{n}+z)^{-\frac{1}{2}}(\sqrt{b}-\sqrt{z})\rightarrow u-\sqrt{a}.\]
By the continuity of the continuous functional calculus, we have some function $O$ on $[0,2]$ with $\lim_{t\rightarrow0}O(t)=0$ such that $\norm{\sqrt{x}-\sqrt{y}}\leq O(\norm{x-y})$, for all $x,y\in A^1_+$.  For all $n$, we have $\norm{\sqrt{a}(\tfrac{1}{n}+z)^{-\frac{1}{2}}}\leq\norm{\sqrt{a}(\tfrac{1}{n}+a)^{-\frac{1}{2}}}\leq1$ so
\[\norm{u-\sqrt{a}}\leq\norm{\sqrt{b}-\sqrt{z}}\leq O(\norm{b-z})=O(\norm{(a-b)_+})=O(\mathbf{h}(a,b)).\]
As in the proof of \cite{Pedersen1979} Proposition 1.4.10, we have $u^*u\leq b$, and
\[\norm{a-u^*u}\leq\norm{a-u^*\sqrt{a}+u^*\sqrt{a}-u^*u}\leq\norm{\sqrt{a}-u^*}+\norm{\sqrt{a}-u}\leq2O(\mathbf{h}(a,b)).\]
Thus $\mathbf{p}\precapprox\mathbf{h}$.
\end{proof}

Another quantification of $\leq$ on $A^1_+$ is given by
\[\mathbf{n}(a,b)=\inf_{a\leq c\leq 1}\norm{c-b}.\]
For unital $A$, $\mathbf{n}(a,b)=\mathbf{p}(b^\perp,a^\perp)$ and $\mathbf{h}(a,b)=\mathbf{h}(b^\perp,a^\perp)$ so it immediately follows that $\mathbf{h}$ and $\mathbf{n}$ are also uniformly equivalent.  Actually, this extends to non-unital $A$.

\begin{cor}\label{h=n}
On $A^1_+\times A^1_+$, $\mathbf{h}\leq\mathbf{n}\precapprox\mathbf{h}$, so $\mathbf{h}$ is uniformly equivalent to $\mathbf{n}$.
\end{cor}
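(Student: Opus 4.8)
The plan is to run the unital argument sketched just above, but inside the unitization $\widetilde{A}$, where \autoref{h=p} is available because it holds for every C*-algebra.

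The inequality $\mathbf{h}\leq\mathbf{n}$ will be immediate, exactly as $\mathbf{h}\leq\mathbf{p}$ was: if $a\leq c\leq1$ then $\phi(a-b)\leq\phi(c-b)\leq\norm{c-b}$ for every quasistate $\phi$, so \eqref{||a+||} gives $\mathbf{h}(a,b)=\sup_{\phi\in\mathsf{Q}}\phi(a-b)\leq\norm{c-b}$, and taking the infimum over admissible $c$ yields $\mathbf{h}(a,b)\leq\mathbf{n}(a,b)$.

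For $\mathbf{n}\precapprox\mathbf{h}$ I would exploit the order-reversing self-bijection $x\mapsto x^\perp=1-x$ of $\widetilde{A}^1_+$. For $a,b\in A^1_+$ we have $a^\perp,b^\perp\in\widetilde{A}^1_+$, and $a-b=b^\perp-a^\perp$ gives $\mathbf{h}(a,b)=\mathbf{h}(b^\perp,a^\perp)$; meanwhile the substitution $c=1-d$ turns the constraint $a\leq c\leq1$ into $0\leq d\leq a^\perp$ and $\norm{c-b}$ into $\norm{b^\perp-d}$, so $\mathbf{n}(a,b)=\mathbf{p}(b^\perp,a^\perp)$, the right-hand side computed in $\widetilde{A}$. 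Applying \autoref{h=p} to the unital C*-algebra $\widetilde{A}$, for each $\epsilon>0$ there is $\delta>0$ with $\mathbf{h}(x,y)<\delta\Rightarrow\mathbf{p}(x,y)<\epsilon$ for all $x,y\in\widetilde{A}^1_+$; specializing to $x=b^\perp$, $y=a^\perp$ and translating back gives $\mathbf{h}(a,b)<\delta\Rightarrow\mathbf{n}(a,b)<\epsilon$, that is, $\mathbf{n}\precapprox\mathbf{h}$. Since both substitutions above are exact equalities, no estimate beyond \autoref{h=p} is needed — in effect the modulus produced there for $\widetilde{A}$ serves directly.

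The one point to watch is the bookkeeping around the ambient algebra: one must make sure the witness $c$ in $\mathbf{n}(a,b)$ — equivalently the witness $d$ in $\mathbf{p}(b^\perp,a^\perp)$ — is permitted to lie in $\widetilde{A}$ and not merely in $A$, so that $\mathbf{n}(a,b)=\mathbf{p}(b^\perp,a^\perp)$ is a genuine equality and \autoref{h=p}, read for the C*-algebra $\widetilde{A}$, applies verbatim. Everything else is just the formal manipulation of $\precapprox$.
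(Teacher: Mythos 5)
Your first half ($\mathbf{h}\leq\mathbf{n}$) is fine and is exactly the paper's argument. The gap is in the second half, and it is precisely the point you flag as ``bookkeeping'' and then resolve the wrong way. Applying \autoref{h=p} inside $\widetilde{A}$ to $b^\perp,a^\perp$ produces, when $\mathbf{h}(a,b)=\mathbf{h}(b^\perp,a^\perp)$ is small, a witness $d\in\widetilde{A}^1_+$ with $d\leq a^\perp$ and $\norm{b^\perp-d}$ small, i.e.\ $c=d^\perp\in\widetilde{A}^1_+$ with $a\leq c\leq1$ and $\norm{c-b}$ small. But $\mathbf{n}$ is a quantification of $\leq$ on $A^1_+$: the infimum in $\mathbf{n}(a,b)=\inf_{a\leq c\leq1}\norm{c-b}$ runs over $c\in A^1_+$, not over $\widetilde{A}^1_+$. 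You are not free to ``permit the witness to lie in $\widetilde{A}$'': with that convention you only get the inequality ($\mathbf{p}(b^\perp,a^\perp)$ computed in $\widetilde{A}$) $\leq\mathbf{n}(a,b)$, and the quantity your argument controls is the smaller one. For unital $A$ your reduction is exactly the observation already recorded in the sentence preceding the corollary; the entire content of the corollary is the non-unital extension, i.e.\ producing the witness inside $A$. That stronger form is what the paper actually uses later: in \autoref{FilterEqvs} the element $c\geq b$ with $\norm{c-b}$ small must lie in $A^1_+$ so that $\leq$-closedness of $F\subseteq A^1_+$ (or $\mathbf{d}$-cofinality in $A^1_+$) applies, and in \autoref{compactequivalents} one needs $u\in A^1_+$ with $a\leq u$.

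The paper closes this gap by rerunning the construction of \autoref{h=p} with $a,b$ replaced by $b^\perp,a^\perp$: setting $z=b^\perp+(a-b)_+$ and $u=\lim\sqrt{a^\perp}(\tfrac{1}{n}+z)^{-\frac{1}{2}}\sqrt{b^\perp}$, one gets $a\leq(u^*u)^\perp$ and $\norm{(u^*u)^\perp-b}\leq2O(\mathbf{h}(a,b))$, and then \emph{checks that the witness lies in $A$}: if $\pi$ is the character on $\widetilde{A}$ with kernel $A$, then $\pi(z)=1=\pi(u)$, so $\pi((u^*u)^\perp)=0$, i.e.\ $(u^*u)^\perp\in A$. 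If you insist on using \autoref{h=p} in $\widetilde{A}$ as a black box, you would still owe an argument removing the scalar part of your witness $c$ (note $\pi(c)=|\pi(c)-\pi(b)|\leq\norm{c-b}$ is small, but pushing $c$ into $A$ while preserving $a\leq c\leq1$ and the norm estimate is exactly what needs proof). As written, the proposal proves a strictly weaker statement than the corollary.
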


\begin{proof}
We can essentially apply the proof of \autoref{h=p} with $a$ and $b$ replaced by $b^\perp$ and $a^\perp$ respectively.  First, for any $c\geq a$, \eqref{||a+||} yields
\[\mathbf{h}(a,b)=\sup_{\phi\in\mathsf{Q}}\phi(a-b)\leq\sup_{\phi\in\mathsf{Q}}\phi(c-b)\leq\norm{c-b}.\]
Also $a-b\leq(a-b)_+$ so $a\leq b+(a-b)_+$ and $\mathbf{h}(a,b)=\norm{b+(a-b)_+-b}$ so
\[\mathbf{h}(a,b)=\inf_{a\leq c}\norm{c-b}\leq\inf_{0\leq c\leq b}\norm{c-b}=\mathbf{n}(a,b).\]
Now let $z=b^\perp+(a-b)_+$ and $u=\lim\sqrt{a^\perp}(\tfrac{1}{n}+z)^{-\frac{1}{2}}\sqrt{b^\perp}$.  As before, we have $a\leq(u^*u)^\perp$ and $\norm{(u^*u)^\perp-b}\leq2O(\mathbf{h}(a,b))$.  Note that $(u^*u)^\perp\in A$ even when $A$ is not unital, as then $\pi(z)=1=\pi(u)$ and hence $\pi((u^*u)^\perp)=0$, where $\pi$ is the character on the unitization $\widetilde{A}$ with kernel $A$.
\end{proof}

Restricting to projections $\mathcal{P}=\{p\in A:p\ll p^*\}$ allows for a stronger result.  First we need the following lemma, adapted from \cite{10152013}, which strengthens the standard result that close projections are unitarily equivalent (see \cite{Blackadar2017} II.3.3.5).

\begin{lem}\label{symex}
If $p,q\in\mathcal{P}$ and $\norm{p-q}<1$ then $p$ and $q$ can be exchanged by a symmetry(=self-adjoint unitary), i.e. we have $u\in\widetilde{A}_\mathrm{sa}$ with $u^2=1$ and $up=qu$.
\end{lem}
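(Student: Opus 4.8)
The plan is to build the desired symmetry out of the canonical self-adjoint invertible element that implements the standard "close projections are unitarily equivalent" argument. Set $t = p + q - 1 \in \widetilde{A}_\mathrm{sa}$ (the Kadison--Pedersen element). A direct computation using $p^2 = p$ and $q^2 = q$ shows that $t$ commutes with neither but satisfies $tp = pq = qt$; moreover $t^2 = 1 - (p-q)^2$, and since $\norm{p - q} < 1$ we get $\norm{1 - t^2} < 1$, so $t^2$ is positive and invertible in $\widetilde{A}$, hence so is $|t| = (t^2)^{1/2}$. Since $t^2$ lies in the C*-subalgebra generated by $p$ and $q$ and commutes with $t$, so does $|t|$ and its inverse.

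Next I would define $u = t|t|^{-1} = |t|^{-1} t$, the "unitary part" in the polar decomposition of $t$. Because $t$ is self-adjoint and $|t|^{-1}$ is a positive self-adjoint element commuting with $t$, the element $u$ is self-adjoint, and $u^2 = t^2 |t|^{-2} = 1$, so $u$ is a symmetry in $\widetilde{A}_\mathrm{sa}$. It remains to check the intertwining $up = qu$. From $tp = qt$ we obtain $t^2 p = t q t = q t^2$ (using $tq = pt$, which follows by taking adjoints of $tp = qt$ since everything is self-adjoint), so $t^2$ commutes past $p$ to $q$; by continuous functional calculus applied to $t^2$ the same holds for $|t|^{-1}$, giving $|t|^{-1} p = q |t|^{-1}$ — wait, more carefully: $|t|^{-1}$ is a limit of polynomials in $t^2$, and $t^2 p = q t^2$ gives $f(t^2) p = q f(t^2)$ for every such polynomial $f$, hence $|t|^{-1} p = q |t|^{-1}$ is not quite right either since $|t|^{-1}$ is a function of $t^2$ and we need it to move $p$ to $q$; the correct statement is that $g(t^2)$ intertwines in the same way $t^2$ does, i.e. $g(t^2) p = $ ... the cleanest route is: $up = t|t|^{-1}p$, and I push $|t|^{-1}$ to the left using that it is a function of $t^2$ while $t^2 p = q t^2$ forces functions of $t^2$ to satisfy $h(t^2)p = p' $ — so instead I verify directly $up = t|t|^{-1}p$ and $qu = q t |t|^{-1} = tp|t|^{-1}$ (using $qt = tp$), and then it suffices that $|t|^{-1}$ commutes with $p$... which is false.

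Let me restructure the intertwining check: the honest identities are $tp = pq = qt$ and, taking adjoints, $pt = qp = tq$. From $tp = qt$ I get $t^n p = q t^n$ for all $n$ by induction (since $t^{n}p = t^{n-1}(tp) = t^{n-1}qt = (t^{n-1}q)t = \dots$ — careful, $t$ and $q$ don't commute; rather use $t^n p = t^{n-1}(tp) = t^{n-1}(qt)$ and now I need $t^{n-1} q = q t^{n-1}$?). The robust fix is the symmetrised element: use $t^2 p = t(tp) = t(qt) = (tq)t = (pt)t$... no. The genuinely clean fact, which I would establish first as the key computational lemma, is $\boxed{t^2 p = p t^2}$: indeed $t^2 p = (1 - (p-q)^2)p$ and expanding $(p-q)^2 = p - pq - qp + q$ one checks $(p-q)^2$ commutes with $p$ (it equals $p + q - pq - qp$, and $p(p-q)^2 = p + pq - pq - pqp = p - pqp$ while $(p-q)^2 p = p + qp - pqp - qp = p - pqp$). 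So $t^2$ commutes with $p$, hence with $q$ by symmetry, hence $|t|^{-1}$ commutes with both $p$ and $q$ by functional calculus. Now $up = t|t|^{-1}p = t p |t|^{-1} = qt|t|^{-1} = q|t|^{-1}t = |t|^{-1}qt$... and $qu = q|t|^{-1}t = |t|^{-1}qt$, so $up = qu$.

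The main obstacle, now resolved above, is precisely this bookkeeping: getting the intertwining right requires first isolating the fact that $t^2$ (equivalently $(p-q)^2$) commutes with $p$ and with $q$ individually, even though $t$ itself does not, and then invoking the continuous functional calculus to transfer this to $|t|^{-1} = (t^2)^{-1/2}$. Everything else — self-adjointness of $u$, the relation $u^2 = 1$, and membership in $\widetilde{A}_\mathrm{sa}$ rather than a larger algebra — is then immediate. One should finally note that when $A$ is non-unital the element $u$ still lies in $\widetilde A$: applying the character $\pi$ with kernel $A$ gives $\pi(t) = \pi(p) + \pi(q) - 1 = -1$, so $\pi(|t|^{-1}) = 1$ and $\pi(u) = -1$, confirming $u \in \widetilde A_\mathrm{sa}$ (indeed $u + 1 \in A$) as required.
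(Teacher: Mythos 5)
Your proof is correct and takes essentially the same route as the paper: the element $t=p+q-1$, the key fact that $t^2=1-(p-q)^2$ commutes with $p$ and $q$ and is invertible since $\norm{1-t^2}=\norm{p-q}^2<1$, and then $u=t|t|^{-1}$ with the intertwining $up=tp|t|^{-1}=qt|t|^{-1}=qu$ via functional calculus. (Only a cosmetic slip: the middle terms of your identities should read $tp=qp=qt$ and $pt=pq=tq$, but you only use the outer equalities, which are correct.)
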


\begin{proof}
Let $a=p+q-1\in\widetilde{A}_\mathrm{sa}$ so $ap=qp=qa$ and $aq=pq=pa$.  Thus $a^2p=aqp=pqp=pqa=pa^2$.  Also, $a^2=pq+qp-p-q+1=1-(p-q)^2$ so $\norm{1-a^2}=\norm{p-q}^2<1$ and hence $a^2$ is invertible.  Thus we may set $u=a|a|^{-1}$ so $u^2=1$, as $a\in A_\mathrm{sa}$.  Also, as $p$ commutes with $a^2$ and hence with $|a|^{-1}$,
\[up=a|a|^{-1}p=ap|a|^{-1}=qa|a|^{-1}=qu.\qedhere\]
\end{proof}

\begin{cor}\label{epqhpq}
If $p,q\in\mathcal{P}$ and $\norm{p-q}<1$ then $\mathbf{h}(p,q)=\norm{p-q}$.
\end{cor}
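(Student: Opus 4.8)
The inequality $\mathbf{h}(p,q)\leq\norm{p-q}$ is immediate from $\mathbf{h}\leq\mathbf{e}$, which was noted in the proof of \eqref{hdist}. So the plan is to establish the reverse inequality $\norm{p-q}\leq\mathbf{h}(p,q)=\norm{(p-q)_+}$, and for this I would use the symmetry supplied by \autoref{symex} to show that the positive and negative parts of $p-q$ have equal norm.

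First I would observe that the symmetry $u\in\widetilde{A}_\mathrm{sa}$ with $u^2=1$ and $up=qu$ in fact conjugates $p$ and $q$ to each other: multiplying $up=qu$ on the right by $u$ gives $upu=q$, and multiplying it on the left by $u$ gives $uqu=p$. Since $u$ is a self-adjoint unitary, $\Phi(x)=uxu$ defines a $*$-automorphism of $\widetilde{A}$, and hence commutes with the continuous functional calculus. Applying $\Phi$ to $x=p-q$ yields $\Phi(p-q)=upu-uqu=q-p=-(p-q)$, so $\Phi((p-q)_+)=(\Phi(p-q))_+=(q-p)_+$. As $\Phi$ is isometric, $\norm{(p-q)_+}=\norm{(q-p)_+}$.

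To conclude, I would use the standard fact that for any self-adjoint $x$ we have $\norm{x}=\max(\norm{x_+},\norm{(-x)_+})$, since $x=x_+-(-x)_+$ is a difference of positives with orthogonal supports. Taking $x=p-q$ and feeding in the equality from the previous paragraph gives $\norm{p-q}=\norm{(p-q)_+}=\mathbf{h}(p,q)$, as desired.

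There is no serious obstacle here beyond routine bookkeeping; the only point worth care is extracting from \autoref{symex} the two-sided relation $upu=q$ and $uqu=p$ (rather than just $up=qu$), which is precisely what forces the symmetry between $(p-q)_+$ and $(q-p)_+$ under conjugation by $u$.
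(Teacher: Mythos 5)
Your proof is correct and follows essentially the same route as the paper: invoke \autoref{symex} to get the symmetry $u$, note that conjugation by $u$ is an isometric automorphism exchanging $p$ and $q$ (and hence $(p-q)_+$ with $(q-p)_+$), and conclude via $\norm{p-q}=\norm{(p-q)_+}\vee\norm{(q-p)_+}$. The extra bookkeeping you supply (deriving $upu=q$, $uqu=p$ and the compatibility of the automorphism with the functional calculus) is exactly what the paper leaves implicit.
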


\begin{proof}
if $\mathbf{e}(p,q)<1$ then \autoref{symex} yields an automorphism $a\mapsto uau$ of $A$ exchanging $p$ and $q$ so $\norm{(p-q)_+}=\norm{(q-p)_+}$ and hence
\[\norm{p-q}=\norm{(p-q)_+}\vee\norm{(q-p)_+}=\norm{(p-q)_+}=\mathbf{h}(p,q).\qedhere\]
\end{proof}

\begin{cor}\label{dsadp}
On $\mathcal{P}$, $\mathbf{d}=\mathbf{h}=\mathbf{p}=\mathbf{n}$.
\end{cor}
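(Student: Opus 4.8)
The plan is to reduce to the previous corollary by an elementary compactness/separation argument on $[0,\infty]$-valued functions, using the fact that $\mathbf{h}\leq\mathbf{n}$, $\mathbf{h}\leq\mathbf{p}$, and $\mathbf{h}\leq 2\mathbf{d}$ hold everywhere on $A^1_+$, so only the reverse inequalities on $\mathcal{P}$ need attention. I would organize the argument around the single computation $\mathbf{d}(p,q)=\norm{pq^\perp}$ for $p,q\in\mathcal P$, and split into the case $\mathbf{e}(p,q)=\norm{p-q}<1$ and the degenerate case $\norm{p-q}=1$.

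First I would handle the small-distance case. For $p,q\in\mathcal P$ with $\norm{p-q}<1$, \autoref{epqhpq} already gives $\mathbf{h}(p,q)=\norm{p-q}$, and since $\mathbf{h}\leq\mathbf{d}$ on $A^1_+$ (this follows from \eqref{h<2d}, or more directly since $\mathbf{h}\leq\mathbf{e}$ and $\mathbf{h}$ is $\mathbf e$-invariant while $\mathbf{e}\precapprox\mathbf{d}$ — but in fact we just need $\mathbf{h}\leq\mathbf{d}$, which I would verify directly: for projections, $\norm{(p-q)_+}=\norm{p(1-q)}\cdot(\text{something})$) — more carefully, I would observe $\mathbf d(p,q)=\norm{pq^\perp}=\norm{q^\perp p q^\perp}^{1/2}$ and compare with $\mathbf h(p,q)=\norm{(p-q)_+}$. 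Since $q^\perp(p-q)q^\perp=q^\perp p q^\perp$ and $\norm{(p-q)_+}\geq\norm{q^\perp(p-q)q^\perp}=\mathbf d(p,q)^2$, and $\mathbf d(p,q)\leq 1$, we get $\mathbf d(p,q)\leq\mathbf h(p,q)$, hence $\mathbf d=\mathbf h$ here. Combined with \autoref{h=p}, \autoref{h=n} giving $\mathbf h\leq\mathbf p,\mathbf n$ pointwise and the symmetry argument of \autoref{epqhpq} (the exchanging symmetry $u$ also witnesses $\mathbf p(p,q)\leq\norm{p-q}$ and $\mathbf n(p,q)\leq\norm{p-q}$, since $upu=q$ means $q\leq upu\leq 1$... actually $p=q$ under conjugation, so $\mathbf p(p,q)\leq\norm{q-q}=0$? no): I would instead note $\norm{p-q}=\mathbf h(p,q)\leq\mathbf p(p,q)\leq\norm{p-0}$ is too weak, so for $\mathbf p$ I use that $\mathbf p(p,q)\leq\norm{p-upu}$ where... the cleanest route is $\mathbf p(p,q)\leq\mathbf e(p,q)=\norm{p-q}=\mathbf h(p,q)\leq\mathbf p(p,q)$ provided I can show $\mathbf p\leq\mathbf e$ directly, which is immediate taking $c=q$ when $q\leq q$: $\mathbf p(p,q)=\inf_{0\leq c\leq q}\norm{p-c}\leq\norm{p-q}$. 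Likewise $\mathbf n(p,q)=\inf_{p\leq c\leq1}\norm{c-q}\leq\norm{p-q}$. So on all of $\mathcal P$ we have $\mathbf h\leq\mathbf p\leq\mathbf e$ and $\mathbf h\leq\mathbf n\leq\mathbf e$, and when $\mathbf e(p,q)<1$ the outer terms collapse to $\mathbf h(p,q)$, giving $\mathbf d=\mathbf h=\mathbf p=\mathbf n$.

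The remaining obstacle is the case $\norm{p-q}=1$, where the symmetry lemma is unavailable. Here I would argue all four quantities equal $1$. We have $\mathbf d(p,q)=\norm{pq^\perp}\leq 1$ always; and $\norm{pq^\perp}<1$ would force $\norm{q^\perp p q^\perp}<1$, i.e. $p\ll$ something strictly inside... more precisely, if $\mathbf d(p,q)<1$ then $\mathbf h(p,q)=\mathbf d(p,q)<1$ by the displayed inequality $\mathbf d(p,q)^2\leq\mathbf h(p,q)\leq\mathbf e(p,q)$ would give $\norm{p-q}=\mathbf e(p,q)$; but I need $\mathbf e(p,q)<1$ to close the loop. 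The key point: for projections, $\norm{p-q}=\max(\norm{pq^\perp},\norm{qp^\perp})=\max(\mathbf d(p,q),\mathbf d(q,p))$ whenever $\norm{p-q}<1$, and in general $\norm{p-q}<1 \iff \norm{pq^\perp}<1$ and $\norm{qp^\perp}<1$. So if $\norm{p-q}=1$ then at least one of $\mathbf d(p,q),\mathbf d(q,p)$ equals $1$; to conclude $\mathbf d(p,q)=1$ specifically when $\norm{p-q}=1$ I would use that $\mathbf h(p,q)\leq\mathbf d(p,q)$ and separately that $\mathbf h(p,q)=1$ in this case — indeed $\mathbf h(p,q)=\norm{(p-q)_+}$ and for projections $(p-q)$ has spectrum in $\{-1,0,1\}$ union an interval, with $\norm{p-q}=1$ meaning $1$ or $-1$ is in the spectrum; if $1$ is an eigenvalue then $\mathbf h(p,q)=1$, if only $-1$ then... this is exactly where care is needed. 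I expect the honest resolution is: $\norm{p-q}=1$ with $\mathbf h(p,q)<1$ would mean $p-q\leq t<1$ so $p\leq q+t$, but $p$ a projection and $q+t$ has norm $\leq 1+t$, and testing on the range of $p$ not in range of $q$... I would show such vectors give eigenvalue $1$ of $p-q$, contradiction. Thus $\mathbf h(p,q)=1\leq\mathbf d(p,q)\leq 1$, and similarly $1=\mathbf h(p,q)\leq\mathbf p(p,q)\leq\mathbf e(p,q)\leq 1$ — wait, $\mathbf e(p,q)=\norm{p-q}=1$, good — and likewise for $\mathbf n$. So all four equal $1$. The main obstacle is thus the spectral bookkeeping for projections at distance exactly $1$; everything else is a routine assembly of the pointwise inequalities $\mathbf h\leq\mathbf d$, $\mathbf h\leq\mathbf p\leq\mathbf e$, $\mathbf h\leq\mathbf n\leq\mathbf e$ with \autoref{epqhpq}.
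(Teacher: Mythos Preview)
Your case split on $\mathbf{e}(p,q)=\norm{p-q}$ is the wrong invariant, and this produces a genuine error in the second case. The four functions $\mathbf{d},\mathbf{h},\mathbf{p},\mathbf{n}$ are all asymmetric in $(p,q)$, whereas $\mathbf{e}$ is symmetric, so there is no reason for $\norm{p-q}=1$ to force any of them to equal $1$. Concretely, take $p=0$ and $q$ any nonzero projection: then $\norm{p-q}=1$, yet $\mathbf{d}(p,q)=\mathbf{h}(p,q)=\mathbf{p}(p,q)=\mathbf{n}(p,q)=0$. So the sentence ``Here I would argue all four quantities equal $1$'' is simply false, and the sketched resolution (finding a unit eigenvector of $p-q$ for the eigenvalue $+1$) cannot work, because no such eigenvector need exist. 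The paper instead splits on whether $\mathbf{d}(p,q)<1$; when $\mathbf{d}(p,q)=1$ one has $1=\norm{pq^\perp p}=\norm{(p(p-q)p)_+}\leq\norm{(p-q)_+}=\mathbf{h}(p,q)$ by \eqref{aba+}, which forces $\mathbf{h}(p,q)=1$ directly and hence $\mathbf{p},\mathbf{n},\mathbf{e}$ as well.

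There is a second, smaller gap in your first case. For $\norm{p-q}<1$ the sandwich $\mathbf{h}\leq\mathbf{p},\mathbf{n}\leq\mathbf{e}$ together with \autoref{epqhpq} does give $\mathbf{h}=\mathbf{p}=\mathbf{n}=\mathbf{e}$, which is fine. But for $\mathbf{d}$ you only establish $\mathbf{d}(p,q)^2\leq\mathbf{h}(p,q)$, and the global bound $\mathbf{h}\leq 2\mathbf{d}$ does not close the gap to equality. One can repair this within your framework: from $(p-q)^2=pq^\perp p+p^\perp qp^\perp$ (orthogonal pieces) one gets $\norm{p-q}=\max(\mathbf{d}(p,q),\mathbf{d}(q,p))$, and the exchanging symmetry of \autoref{symex} then gives $\mathbf{d}(p,q)=\mathbf{d}(q,p)=\norm{p-q}=\mathbf{h}(p,q)$. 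The paper takes a different route, using the range projection $[qp]\in A$ (which exists because $\mathbf{d}(p,q)<1$ makes $qp$ well-supported) and identifying all four quantities with $\norm{p-[qp]}$ via \autoref{epqhpq} applied to $p$ and $[qp]$; this also handles the case $\mathbf{d}(p,q)<1$ but $\norm{p-q}=1$, which your split leaves in limbo.
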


\begin{proof}
If $\mathbf{d}(p,q)<1$ then $qp$ is well-supported so we have a range projection $[qp]=f(qpq)\in A$ (for continuous $f$ on $[0,1]$ that is $1$ on $\sigma(qpq)$).  By \cite{Bice2012} \S2.3,
\begin{align*}
\mathbf{d}(p,q)&=||p-[qp]||\geq\mathbf{p}(p,q)\geq\mathbf{h}(p,q)\quad\text{and}\\
0&=p(q-[qp])=(p-[qp])(q-[qp])\quad\text{so}\\
(p-q)_+&=(p-[qp])_++([qp]-q)_+=(p-[qp])_+\quad\text{and hence}\\
\mathbf{h}(p,q)&=\mathbf{h}(p,[qp])=||p-[qp]||,\quad\text{by \autoref{epqhpq}}.
\end{align*}
While if $\mathbf{d}(p,q)=1$ then $1=||pq^\perp p||=||(pq^\perp p)_+||$ so, by \eqref{aba+},
\[1=||(pq^\perp p)_+||=||(p(p-q)p)_+||\leq||(p-q)_+||=\mathbf{h}(p,q)\leq\mathbf{p}(p,q)\leq||p-q||\leq1.\]
Thus $\mathbf{d}=\mathbf{h}=\mathbf{p}$ on $\mathcal{P}$.  A similar argument with $[p^\perp q^\perp]^\perp\in A$ applies to $\mathbf{n}$.
\end{proof}

Alternatively we could have noted that, by reverting to a C*-subalgebra if necessary, it suffices to prove \autoref{dsadp} when $A$ is generated by $p$ and $q$.  As every irreducible representation of a C*-algebra generated by a pair of projections is on a Hilbert space of dimension at most $2$, for $\mathbf{d}=\mathbf{h}$ it suffices to consider $A=\mathbb{C}$ or $M_2$, which can be done with some elementary calculations.

For our characterizations of $\mathbf{d}$-filters, we will also need to consider a number of unary functions defined from $\mathbf{d}$.  In general, for $\mathbf D\colon X\times X\to [0,\infty]$, we define $x\mathbf{D},\mathbf{D}y:X\rightarrow[0,\infty]$ by fixing the left/right coordinate, i.e.
\[
x\mathbf{D}(y)=\mathbf{D}(x,y)=\mathbf{D}y(x).
\]

\begin{prp}\label{aba=a+b}
For any $a,b\in A^1_+$ and $\epsilon\in(0,1)$,
\[\mathbf{d}(aba)\ \approx\ \mathbf{d}a+\mathbf{d}b\ \approx\ \mathbf{d}(\epsilon a+(1-\epsilon)b).\]
\end{prp}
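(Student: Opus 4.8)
The plan is to prove each of $\mathbf{d}(aba)$ and $\mathbf{d}(\epsilon a+(1-\epsilon)b)$ uniformly equivalent to $\mathbf{d}a+\mathbf{d}b$ as functions of $x\in A^1_+$, in each case obtaining one inequality pointwise (up to a constant) and the reverse one up to a fractional power.  Throughout put $y^\perp=1-y$, so that $\mathbf{d}y(x)=\norm{xy^\perp}$, and recall $\mathbf{d}\leq1$ on $A^1_+$.  Write $c=\epsilon a+(1-\epsilon)b\in A^1_+$.  Since $x(1-c)=\epsilon\,xa^\perp+(1-\epsilon)\,xb^\perp$ we get $\mathbf{d}(x,c)\leq\mathbf{d}(x,a)+\mathbf{d}(x,b)$, so $\mathbf{d}c\precapprox\mathbf{d}a+\mathbf{d}b$.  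Likewise, from the identity $1-aba=a^\perp+ab^\perp+aba^\perp$ together with the bounds $\norm{xab^\perp}\leq\mathbf{d}(x,a)+\mathbf{d}(x,b)$ and $\norm{xaba^\perp}\leq2\mathbf{d}(x,a)+\mathbf{d}(x,b)$ (each obtained by repeatedly substituting $xa=x-xa^\perp$, $xb=x-xb^\perp$ and using $\norm{a^\perp},\norm{b^\perp}\leq1$), we get $\mathbf{d}(aba)\leq4\mathbf{d}a+2\mathbf{d}b$, so $\mathbf{d}(aba)\precapprox\mathbf{d}a+\mathbf{d}b$.

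For the reverse inequalities the key is the chain
\begin{align*}
\mathbf{d}(x,a)^2&=\norm{a^\perp x^2a^\perp}\leq\norm{a^\perp xa^\perp}=\norm{x^{1/2}a^\perp}^2\\
&=\norm{x^{1/2}(a^\perp)^2x^{1/2}}\leq\norm{x^{1/2}a^\perp x^{1/2}}\leq\norm{x^{1/2}dx^{1/2}}\leq\norm{dx^{1/2}}\leq\norm{d}^{1/2}\norm{xd}^{1/2},
\end{align*}
valid for $x\in A^1_+$ and $d\in\widetilde{A}_+$ with $a^\perp\leq d$, where successively $x^2\leq x$, $(a^\perp)^2\leq a^\perp$ and $a^\perp\leq d$ are conjugated by $a^\perp$ or the positive element $x^{1/2}$, $\norm{x^{1/2}}\leq1$, and $\norm{dx^{1/2}}^2=\norm{dxd}\leq\norm{d}\norm{xd}$.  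Taking $d=\epsilon^{-1}(1-c)$, which dominates $a^\perp$ since $1-c\geq\epsilon a^\perp$, yields $\mathbf{d}(x,a)\leq\epsilon^{-1/2}\mathbf{d}(x,c)^{1/4}$ and symmetrically $\mathbf{d}(x,b)\leq(1-\epsilon)^{-1/2}\mathbf{d}(x,c)^{1/4}$, so $\mathbf{d}a+\mathbf{d}b\precapprox\mathbf{d}c$; hence $\mathbf{d}a+\mathbf{d}b\approx\mathbf{d}(\epsilon a+(1-\epsilon)b)$.  Taking instead $d=1-aba$, which dominates $a^\perp$ as $aba\leq a^2\leq a$, yields $\mathbf{d}(x,a)\leq\mathbf{d}(x,aba)^{1/4}$, so $\mathbf{d}a\precapprox\mathbf{d}(aba)$.

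What remains, and what I expect to be the hard part, is $\mathbf{d}b\precapprox\mathbf{d}(aba)$: naive estimates for $\norm{xb^\perp}$ keep reducing to $\norm{xb^\perp}$ itself, so one must effectively ``invert $a$ on the support of $x$''.  Set $\delta=\mathbf{d}(x,aba)$ and $\eta=\delta^{1/4}$, noting $\eta\geq\mathbf{d}(x,a)$ by the previous paragraph and $\eta\geq\delta$.  From $\norm{x-xa},\norm{x-ax}\leq\eta$ one gets $\norm{x-xba}\leq\delta+\eta\leq2\eta$, hence $\norm{x-abx}\leq2\eta$ by taking adjoints, hence $\norm{ab^\perp x}=\norm{abx-ax}\leq3\eta$.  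Now let $q=f(a)\in A_+$, where $f\colon[0,1]\to[0,1]$ is $0$ on $[0,\tfrac{1}{4}]$, affine on $[\tfrac{1}{4},\tfrac{1}{2}]$ and $1$ on $[\tfrac{1}{2},1]$; then $q$ commutes with $a$, $\norm{q}\leq1$, $q\leq q^{1/2}$, and (from the elementary inequalities $1-f(t)\leq2(1-t)$ and $\tfrac{1}{4}f(t)\leq t^2$ on $[0,1]$) also $q^\perp\leq2a^\perp$ and $\tfrac{1}{4}q\leq a^2$.  Conjugating $q^\perp\leq2a^\perp$ by $x$ gives $\norm{q^\perp x}^2\leq\norm{xq^\perp x}\leq2\norm{xa^\perp x}\leq2\eta$, and similarly $\norm{(1-q^{1/2})x}^2\leq2\eta$ using $1-q^{1/2}\leq q^\perp$; while $\tfrac{1}{4}q\leq a^2$ gives $\norm{q^{1/2}y}\leq2\norm{ay}$ for every $y$, whence $\norm{q^{1/2}b^\perp x}\leq2\norm{ab^\perp x}\leq6\eta$.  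Then $\norm{q^{1/2}b^\perp q^{1/2}x}\leq\norm{q^{1/2}b^\perp x}+\norm{(1-q^{1/2})x}\leq6\eta+\sqrt{2\eta}$, so $\norm{b^\perp q^{1/2}x}^2=\norm{xq^{1/2}(b^\perp)^2q^{1/2}x}\leq\norm{xq^{1/2}b^\perp q^{1/2}x}\leq6\eta+\sqrt{2\eta}$, and finally
\[\mathbf{d}(x,b)=\norm{b^\perp x}\leq\norm{b^\perp q^{1/2}x}+\norm{(1-q^{1/2})x}\leq\sqrt{6\eta+\sqrt{2\eta}}+\sqrt{2\eta},\]
which tends to $0$ as $\delta\to0$.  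Hence $\mathbf{d}b\precapprox\mathbf{d}(aba)$; together with $\mathbf{d}a\precapprox\mathbf{d}(aba)$ and $\mathbf{d}(aba)\leq4\mathbf{d}a+2\mathbf{d}b$ this gives $\mathbf{d}(aba)\approx\mathbf{d}a+\mathbf{d}b$, which with the second paragraph completes the proof.  Apart from this last step, everything is routine bookkeeping with $x^2\leq x$, squaring of positive contractions, and monotonicity under conjugation by self-adjoints.
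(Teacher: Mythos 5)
Your proof is correct --- I checked each estimate (the identity $1-aba=a^\perp+ab^\perp+aba^\perp$, the chain giving $\mathbf{d}(x,a)^2\leq\norm{d}^{1/2}\norm{xd}^{1/2}$ whenever $a^\perp\leq d$, and the spectral-cutoff argument with $q=f(a)$, where indeed $1-f(t)\leq2(1-t)$ and $\tfrac14 f(t)\leq t^2$ on $[0,1]$) --- but in the two hard directions you take a genuinely different route from the paper. For the convex combination, the paper proves $\mathbf{d}a\precapprox\mathbf{d}(\epsilon a+(1-\epsilon)b)$ by an iteration: it passes to $(\epsilon a+1-\epsilon)^n$ in $\widetilde{A}$, uses $\sup_{x\in[0,1]}(\epsilon x+1-\epsilon)^n(1-x)\leq\tfrac1{n\epsilon}$ together with $\mathbf{d}a\approx\mathbf{d}a^n$ and $\mathbf{d}^2\leq\mathbf{d}\circ\mathbf{h}$; your scaling trick $a^\perp\leq\epsilon^{-1}(1-c)$ fed into the domination lemma (itself a quantitative cousin of the paper's $\mathbf{d}^2\leq\mathbf{d}\circ\mathbf{h}$, used there with $aba\leq a$ to get $\mathbf{d}a\precapprox\mathbf{d}(aba)$) gives the same conclusion in one line with explicit constants $\epsilon^{-1/2}\mathbf{d}(x,c)^{1/4}$, which is arguably cleaner. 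Conversely, for $\mathbf{d}b\precapprox\mathbf{d}(aba)$ the paper has a much shorter argument than yours, and it is worth knowing: square first, $\mathbf{d}(c,b)^2\leq\norm{cb^\perp c}=\norm{c^2-cbc}$, then telescope
\[\norm{c^2-cbc}\leq\norm{(c-caba)c}+\norm{(ca-c)bac}+\norm{cb(ac-c)}\leq\mathbf{d}(c,aba)+2\mathbf{d}(c,a)\leq\mathbf{d}(c,aba)+2\sqrt{\mathbf{d}(c,aba)},\]
so no functional calculus is needed; your ``local unit'' $q^{1/2}$ built from $f(a)$ is a valid and more generally applicable device for inverting $a$ on the approximate support of $x$, but here it costs an extra page of bookkeeping and worse moduli ($\delta^{1/8}$-type rather than $\delta^{1/4}$-type).
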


\begin{proof}  First note $\mathbf{d}aba\leq2\mathbf{d}a+\mathbf{d}b$ and hence $\mathbf{d}(aba)\precapprox\mathbf{d}a+\mathbf{d}b$, as
\begin{align*}
\mathbf{d}(c,aba)&=\norm{c-caba}\\
&\leq\norm{c-ca}+\norm{ca-cba}+\norm{cba-caba}\\
&\leq2\mathbf{d}(c,a)+\mathbf{d}(c,b).
\end{align*}

Next, as $aba\leq a^2\leq a$, $\mathbf{d}(c,a)^2\leq\mathbf{d}(c,aba)+\mathbf{h}(aba,a)=\mathbf{d}(c,aba)$ and
\begin{align*}
\mathbf{d}(c,b)^2 &=\norm{cb^{\perp2}c}\leq\norm{cb^\perp c}=\norm{c^2-cbc}\\
&\leq\norm{c^2-cabac}+\norm{cabac-cbac}+\norm{cbac-cbc}\\
&\leq\norm{c-caba}\norm{c}+\norm{ca-c}\norm{bac}+\norm{cb}\norm{ac-c}\\
&\leq\mathbf{d}(c,aba)+2\mathbf{d}(c,a)\\
&\leq\mathbf{d}(c,aba)+2\sqrt{\mathbf{d}(c,aba)}.
\end{align*}
Thus $\mathbf{d}a+\mathbf{d}b\precapprox\mathbf{d}(aba)$ and hence $\mathbf{d}a+\mathbf{d}b\approx\mathbf{d}(aba)$.

In particular, for any $n\in\mathbb{N}$, setting $a=b$ above and using \eqref{d2<hd} yields
\begin{equation}\label{dan}
\mathbf{d}a\precapprox\mathbf{d}a^n\precapprox\mathbf{d}a^{3^n}\precapprox\mathbf{d}a.
\end{equation}
Also $\sup_{x\in[0,1]}(\epsilon x+1-\epsilon)^n(1-x)\leq\frac{1}{n\epsilon}$ so
\begin{align*}
\mathbf{d}a-\tfrac{1}{n\epsilon}&\leq\mathbf{d}a-\mathbf{d}((\epsilon a+1-\epsilon)^n,a)\\
&\leq\mathbf{d}(\epsilon a+1-\epsilon)^n\qquad\text{by \eqref{tri}}\\
&\precapprox\mathbf{d}(\epsilon a+1-\epsilon)\qquad\text{by \eqref{dan}}\\
&\precapprox\mathbf{d}(\epsilon a+(1-\epsilon)b)\qquad\text{by \eqref{d2<hd}}.
\end{align*}
As $n$ was arbitrary, $\mathbf{d}a\precapprox\mathbf{d}(\epsilon a+(1-\epsilon)b)$ and, by symmetry, $\mathbf{d}b\precapprox\mathbf{d}(\epsilon a+(1-\epsilon)b)$.

On the other hand, $\mathbf{d}(\epsilon a+(1-\epsilon)b)\precapprox\mathbf{d}a+\mathbf{d}b$ follows from
\begin{align*}
\mathbf{d}(c,\epsilon a+(1-\epsilon)b)&=\norm{c-c(\epsilon a+(1-\epsilon)b)}\\
&=\norm{\epsilon c-\epsilon ca+(1-\epsilon)c-(1-\epsilon)cb)}\\
&\leq\epsilon\norm{c-ca}+(1-\epsilon)\norm{c-cb}\\
&=\epsilon\mathbf{d}(c,a)+(1-\epsilon)\mathbf{d}(c,b).\qedhere
\end{align*}
\end{proof}

A slightly better substitute for multiplication on $A^1_+$ than $aba$ might be
\[a\odot b=\sqrt{a}b\sqrt{a}.\]
For if $ab=ba$ then $a\odot b=ab$ and $(a\odot b)\odot c=a\odot(b\odot c)$.  In particular, $\odot$ is left alternative, i.e., $(a\odot a)\odot b=a\odot(a\odot b)$ and also right distributive, i.e., $a\odot(b+c)=a\odot b+a\odot c$.  We can also quantify $\ll$ using $\odot$ by
\[\mathbf{f}(a,b)=\norm{a-a\odot b}.\]
The advantage of $\mathbf{f}$ over $\mathbf{d}$ is that it determines $\mathbf{h}$ in a natural way.

\begin{prp}
We have $\mathbf{d}\approx\mathbf{f}=\mathbf{f}\circ\mathbf{h}$ and
\begin{equation}\label{hf}
\mathbf{h}(a,b)=\sup_{c\in A^1_+}(\mathbf{f}(c,b)-\mathbf{f}(c,a)).
\end{equation}
\end{prp}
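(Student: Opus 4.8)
The plan is to prove the three assertions in the order $\mathbf{f}=\mathbf{f}\circ\mathbf{h}$, then \eqref{hf}, then $\mathbf{d}\approx\mathbf{f}$, since the identity $\mathbf{f}=\mathbf{f}\circ\mathbf{h}$ will be used to establish \eqref{hf}. For $\mathbf{f}\circ\mathbf{h}\leq\mathbf{f}$, I would use that $\mathbf{e}$ quantifies equality to get $\mathbf{f}\circ\mathbf{e}=\mathbf{f}$, combined with $\mathbf{h}\leq\mathbf{e}$ (shown in \eqref{ddist}'s proof, $\norm{a_+}\leq\norm a$), so $\mathbf{f}\circ\mathbf{h}\leq\mathbf{f}\circ\mathbf{e}=\mathbf{f}$. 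For the reverse $\mathbf{f}\leq\mathbf{f}\circ\mathbf{h}$, I need: for all $a,b,c\in A^1_+$, $\mathbf{f}(a,b)\leq\mathbf{f}(a,c)+\mathbf{h}(c,b)$. Writing $\mathbf{f}(a,b)=\norm{a-\sqrt a b\sqrt a}=\norm{\sqrt a(1-b)\sqrt a}=\norm{\sqrt a b^\perp\sqrt a}$, I would estimate $\norm{\sqrt a b^\perp\sqrt a}\leq\norm{\sqrt a c^\perp\sqrt a}+\norm{\sqrt a(c^\perp-b^\perp)\sqrt a}=\mathbf{f}(a,c)+\norm{\sqrt a(b-c)\sqrt a}$, and then bound $\norm{\sqrt a(b-c)\sqrt a}\leq\norm{(\sqrt a(b-c)\sqrt a)_+}\leq\norm{(b-c)_+}=\mathbf{h}(c,b)$ using \eqref{aba+} with the element $\sqrt a\in A^1$ (exactly as in the proof of \eqref{d2<hd}). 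The one subtlety is that $\norm{\sqrt a b^\perp\sqrt a}$ need not equal its positive part unless $b^\perp\geq 0$, which it is, so $\sqrt a b^\perp\sqrt a\geq 0$ and the first two norms really are norms of positive elements.

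For \eqref{hf}, write $F(a,b):=\sup_{c\in A^1_+}(\mathbf{f}(c,b)-\mathbf{f}(c,a))$. The inequality $F(a,b)\leq\mathbf{h}(a,b)$ is immediate from $\mathbf{f}=\mathbf{f}\circ\mathbf{h}$: for every $c$, $\mathbf{f}(c,b)\leq\mathbf{f}(c,a)+\mathbf{h}(a,b)$, so the supremum is at most $\mathbf{h}(a,b)$. For the reverse $\mathbf{h}(a,b)\leq F(a,b)$, the natural move is to test $c=a$ (or $c$ an approximate identity-type element for the hereditary subalgebra of $a$): $\mathbf{f}(a,a)=\norm{a-\sqrt a a\sqrt a}=\norm{a-a^2}$ is small only when $a$ is close to a projection, so a single test element will not suffice, and instead I expect to take a sequence $c_n$ built from the functional calculus — e.g. $c_n=f_n(a)$ with $f_n\to 1$ pointwise on $(0,1]$ — so that $\mathbf{f}(c_n,a)=\norm{c_n-\sqrt{c_n}a\sqrt{c_n}}=\norm{c_n(1-a)}\to 0$ while $\mathbf{f}(c_n,b)=\norm{\sqrt{c_n}b^\perp\sqrt{c_n}}$ stays close to $\sup_{\phi\in\mathsf Q}\phi(a^\perp b^\perp)$-type quantities; reconciling this with $\norm{(a-b)_+}=\sup_{\phi\in\mathsf Q}\phi(a-b)$ via \eqref{||a+||} is the crux. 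Concretely I would pick a quasistate $\phi$ nearly attaining $\norm{(a-b)_+}$, and choose $c_n$ so that $\phi(\sqrt{c_n}\cdot\sqrt{c_n})$ concentrates where $a$ is near its top of spectrum; then $\mathbf{f}(c_n,b)-\mathbf{f}(c_n,a)\gtrsim\phi(a-b)-o(1)$.

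Finally, $\mathbf{d}\approx\mathbf{f}$: one direction, $\mathbf{f}\precapprox\mathbf{d}$, should come from $\mathbf{f}(c,b)=\norm{\sqrt c b^\perp\sqrt c}\leq\norm{\sqrt c b^\perp}=\sqrt{\norm{\sqrt c b^\perp b^\perp\sqrt c}}\leq\sqrt{\norm{\sqrt c b^\perp\sqrt c}}$... more directly, $\mathbf{f}(c,b)\leq\norm{c-\sqrt c\,b\,\sqrt c}$ and $\norm{\sqrt c b^\perp\sqrt c}\leq\norm{\sqrt c}\norm{b^\perp\sqrt c}=\norm{b^\perp\sqrt c}$, while $\mathbf{d}(c,b)^2=\norm{\sqrt c\,c\,b^{\perp 2}\sqrt c}$ — so I would instead relate both to $\mathbf{d}\sqrt c$ or use \eqref{dan}: since $\mathbf{f}(c,b)=\norm{\sqrt c b^\perp\sqrt c}$ and $\mathbf{d}(\sqrt c,b)=\norm{\sqrt c b^\perp}$ with $\norm{\sqrt c b^\perp}^2=\norm{\sqrt c b^{\perp 2}\sqrt c}\geq\norm{\sqrt c b^\perp\sqrt c}$ is false in general too; the clean path is $\mathbf{f}(c,b)\leq\mathbf{d}(c,b)$ directly from $\norm{\sqrt c b^\perp\sqrt c}\leq\norm{\sqrt c}\,\norm{b^\perp\sqrt c}$ is wrong-ordered, so I would argue $\mathbf{f}\precapprox\mathbf{d}$ via $\mathbf{f}(c,b)^2\leq\norm{\sqrt c b^\perp\sqrt c}\cdot\norm{c}\leq\norm{b^\perp c b^\perp}=\mathbf{d}(b,c)^?$ — this bookkeeping is the routine-but-fiddly part I would expect to pin down carefully, plausibly by passing through $c^{1/2}$ and invoking \eqref{dan} ($\mathbf{d}c\approx\mathbf{d}c^n$) to trade square roots for powers. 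For $\mathbf{d}\precapprox\mathbf{f}$, the symmetric estimate $\mathbf{d}(c,b)=\norm{cb^\perp}$, $\norm{cb^\perp}^2=\norm{cb^{\perp 2}c}\leq\norm{cb^\perp c}=\norm{\sqrt c(\sqrt c b^\perp\sqrt c)\sqrt c}\leq\norm{\sqrt c b^\perp\sqrt c}=\mathbf{f}(c^{?},b)$ again needs a power-trick to replace $c$ by $\sqrt c$; here \eqref{dan} applied as $\mathbf{d}c\precapprox\mathbf{d}c^{3}$ or the identity $\mathbf{d}a\approx\mathbf{d}(aba)$ from \autoref{aba=a+b} with $b$ replaced suitably gives the comparison. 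The main obstacle throughout is the lower bound $\mathbf{h}\leq F$ in \eqref{hf}: choosing the right test elements $c_n$ from the functional calculus of $a$ and verifying the quasistate estimate is where the real work lies; everything else is triangle-inequality and operator-monotonicity bookkeeping of the kind already carried out in the preceding propositions.
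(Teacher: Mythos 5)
Your treatment of $\mathbf{f}=\mathbf{f}\circ\mathbf{h}$ contains a wrong-direction inequality at the decisive step. After applying the plain triangle inequality you claim $\norm{\sqrt{a}(b-c)\sqrt{a}}\leq\norm{(\sqrt{a}(b-c)\sqrt{a})_+}\leq\norm{(b-c)_+}$; the first inequality is backwards (for self-adjoint $x$ one has $\norm{x_+}\leq\norm{x}$, not the reverse), and the bound you actually need, $\norm{\sqrt{a}(b-c)\sqrt{a}}\leq\mathbf{h}(c,b)=\norm{(c-b)_+}$, is false outright: take $a=1$ and $b\leq c$ with $b\neq c$, so the right side vanishes while the left does not (note also your $\norm{(b-c)_+}$ is $\mathbf{h}(b,c)$, not $\mathbf{h}(c,b)$). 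The repair is to decompose \emph{inside} the positive part, exactly as in the proof of \eqref{d2<hd}: since $a\odot b^\perp\geq0$, $\mathbf{f}(a,b)=\norm{(a\odot b^\perp)_+}\leq\norm{(a\odot c^\perp)_+}+\norm{(a\odot(b^\perp-c^\perp))_+}\leq\mathbf{f}(a,c)+\norm{(c-b)_+}$, using subadditivity of $\norm{(\cdot)_+}$ from \eqref{||a+||} and then \eqref{aba+}. (Your easy directions, $\mathbf{f}\circ\mathbf{h}\leq\mathbf{f}$ and $\sup_c(\mathbf{f}(c,b)-\mathbf{f}(c,a))\leq\mathbf{h}(a,b)$, are fine.)

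The genuine gap is the reverse inequality in \eqref{hf}, which you acknowledge is unproved, and the heuristic you propose—test elements $c_n=f_n(a)$ from the functional calculus of $a$ concentrated where $a$ is near the top of its spectrum—cannot work: the state nearly attaining $\norm{(a-b)_+}=\sup_{\phi\in\mathsf{Q}}\phi(a-b)$ generally lives nowhere near the top of the spectrum of $a$. Already in $\mathbb{C}^2$ with $a=(1,\tfrac12)$ and $b=(1,0)$, such $c_n$ sit at the first coordinate and give $\mathbf{f}(c_n,b)-\mathbf{f}(c_n,a)\approx0$ while $\mathbf{h}(a,b)=\tfrac12$; the correct witness lives at the other point, and in the non-commutative case need not come from the C*-subalgebra generated by $a$ at all. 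The missing ingredient is excision of pure states: take a pure state $\phi$ with $\mathbf{h}(a,b)<\phi(a-b)+\epsilon$ and apply \cite[Proposition 2.2]{AkemannAndersonPedersen1986} to get $c\in A^1_+$ with $\phi(c)=1$ and $\norm{c\odot a-c\phi(a)}<\epsilon$; then $\mathbf{f}(c,a)=\norm{c\odot a^\perp}<\phi(a^\perp)+\epsilon$ while $\mathbf{f}(c,b)\geq\phi(c\odot b^\perp)=\phi(b^\perp)$, so $\mathbf{f}(c,b)-\mathbf{f}(c,a)>\phi(a-b)-2\epsilon$. Finally, $\mathbf{d}\approx\mathbf{f}$ is left as unfinished bookkeeping, with several of your trial inequalities wrong-ordered as you note; no appeal to \eqref{dan} is needed, since $\mathbf{d}(a,b)=\norm{ab^\perp}\leq\norm{\sqrt{a}b^\perp}\leq\norm{\sqrt{a}\sqrt{b^\perp}}=\sqrt{\mathbf{f}(a,b)}$ gives $\mathbf{d}\leq\sqrt{\mathbf{f}}$, while $\mathbf{f}(a,b)=\norm{\sqrt{a}b^\perp\sqrt{a}}\leq\norm{\sqrt{a}b^\perp}=\sqrt{\norm{b^\perp ab^\perp}}\leq\sqrt{\mathbf{d}(a,b)}$ gives $\mathbf{f}\leq\sqrt{\mathbf{d}}$, and these two bounds already yield the uniform equivalence.
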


\begin{proof}  First note that, for $a,b\in A^1_+$
\[\norm{ab^\perp}^2=\norm{b^\perp a^2b^\perp}\leq\norm{a^2b}.\]
Thus, as binary functions on $A^1_+$,
\[\mathbf{d}(a,b)=\norm{ab^\perp}\approx\norm{\sqrt{a}b^\perp}\approx\norm{\sqrt{a}\sqrt{b^\perp}}\approx\norm{\sqrt{a}\sqrt{b^\perp}}^2=\mathbf{c}(a,b).\]

  As in the proof of \eqref{d2<hd}, we have
\begin{align*}
\mathbf{f}(a,b)&=\norm{a\odot b^\perp}\\
&=\norm{(a\odot b^\perp)_+}\\
&\leq\norm{(a\odot c^\perp)_+}+\norm{(a\odot(b^\perp-c^\perp))_+}\\
&\leq\norm{(a\odot c^\perp)}+\norm{(c-b)_+}\\
&\leq\mathbf{f}(a,c)+\mathbf{h}(c,b).
\end{align*}
Thus $\mathbf{f}=\mathbf{f}\circ\mathbf{h}$ and $\mathbf{h}(a,b)\geq\sup_{d\in A^1_+}(\mathbf{f}(d,a)-\mathbf{f}(d,b))$.  Conversely, take $a,b\in A^1_+$.  If $\mathbf{h}(a,b)=0$ then the reverse inequality is immediate from $\mathbf{h}(a,b)=\mathbf{f}(0,b)-\mathbf{f}(0,c)$.  Otherwise, for any $\epsilon>0$, we can take a pure state $\phi$ on $A$ with $\mathbf{h}(a,b)<\phi(a-b)+\epsilon$.  By \cite[Proposition 2.2]{AkemannAndersonPedersen1986}, we have $c\in A^1_+$ with
\[\phi(c)=1\qquad\text{and}\qquad\norm{c\odot a-c\phi(a)}<\epsilon.\]
Thus $\norm{c\odot a^\perp-c\phi(a^\perp)}<\epsilon$ so $\norm{c\odot a^\perp}<\norm{c\phi(a^\perp)}+\epsilon=\phi(a^\perp)+\epsilon$ and
\begin{align*}
\mathbf{h}(a,b)&<\phi(a-b)+\epsilon\\
&=\phi(b^\perp-a^\perp)+\epsilon\\
&=\phi(c\odot b^\perp)-\phi(a^\perp)+\epsilon\\
&<\norm{c\odot b^\perp}-\norm{c\odot a^\perp}+2\epsilon.\\
&=\mathbf{f}(c,b)-\mathbf{f}(c,a)+2\epsilon.
\end{align*}
As $\epsilon>0$ was arbitrary, we are done.
\end{proof}

The drawback of $\mathbf{f}$ is that it may not be a distance.  Indeed, by \eqref{hf}, $\mathbf{f}$ is a distance if and only if $\mathbf{h}\leq\mathbf{f}$.  But by \autoref{dsadp}, for projections $p,q\in A$,
\[\mathbf{h}(p,q)=\mathbf{d}(p,q)=\sqrt{\norm{pq^\perp p}}=\sqrt{\mathbf{f}(p,q)}.\]
So $\mathbf{h}\nleq\mathbf{f}$ whenver we have projections $p,q\in A$ with $0<\mathbf{f}(p,q)<1$.  A more involved argument could even show that $\mathbf{h}\nleq\mathbf{f}$ whenever $A$ is non-commutative.

\section{Filters}\label{F}

\begin{dfn}
For $\mathbf{D}\colon X\times X\to [0,\infty]$, define the following conditions on $Y\subseteq X$.
\begin{align}
\label{Dfilter}\tag{$\mathbf{D}$-filter}a,b\in Y\quad&\Leftrightarrow\quad\inf_{c\in Y}(\mathbf{D}(c,b)+\mathbf{D}(c,a))=0.\\
\label{Ddirected}\tag{$\mathbf{D}$-directed}a,b\in Y\quad&\Rightarrow\quad\inf_{c\in Y}(\mathbf{D}(c,b)+\mathbf{D}(c,a))=0.\\
\label{Dclosed}\tag{$\mathbf{D}$-closed}b\in Y\quad&\Leftarrow\quad\inf_{c\in Y}\mathbf{D}(c,b)=0.\\
\label{Ddominated}\tag{$\mathbf{D}$-initial}b\in Y\quad&\Rightarrow\quad\inf_{c\in Y}\mathbf{D}(c,b)=0.\\
\label{Dcoinitial}\tag{$\mathbf{D}$-coinitial}b\in X\quad&\Rightarrow\quad\inf_{c\in Y}\mathbf{D}(c,b)=0.\\
\label{Dcofinal}\tag{$\mathbf{D}$-cofinal}c\in X\quad&\Rightarrow\quad\inf_{b\in Y}\mathbf{D}(c,b)=0.
\end{align}
For any operation $\bullet:X^n\rightarrow X$, we also call $Y\subseteq X$ \emph{$\bullet$-closed} if $\bullet[Y^n]\subseteq Y$.
\end{dfn}

This terminology covers a number of familiar concepts from metric, order and $\mathrm{C}^*$-algebra theory.  For example, $\mathbf{d}$-cofinal $\geq$-directed subsets of $A^1_+$ are increasing approximate units in the usual sense, when considered as self-indexed nets.  

If $\mathbf{D}$ is a metric, $\mathbf{D}$-coinitial/cofinal means dense while $\mathbf{D}$-closed means closed, with respect to the usual ball topology defined by $\mathbf{D}$. The other terms become trivial \textendash\, specifically, arbitrary subsets are $\mathbf{D}$-initial, while the empty and one-point subsets are the only $\mathbf{D}$-directed subsets.  In particular, for $\mathrm{C}^*$-algebras, $\mathbf{e}$-closed/coinitial means norm closed/dense in the usual sense.  

On the other hand, for any order relation $\leq$ (again identified with its characteristic function), $\leq$-closed means upwards closed, $\leq$-directed means downwards directed and $\leq$-cofinal means cofinal in the usual sense.  In particular, $\leq$-filters are the usual order-theoretic filters and, more generally,
\[
\mathbf{D}\text{-filter}\quad\Leftrightarrow\quad\mathbf{D}\text{-directed and $\mathbf{D}$-closed}.
\]

We now characterize the $\mathrm{C}^*$-algebra filters considered in \cite{Bice2011} in various ways.  First we recall that $F\subseteq A^1_+$ is a \emph{norm filter}, according to \cite{Bice2011} Definition 3.1, if
\[\inf_{\substack{k\in\mathbb{N}\\ a_1,\ldots,a_k\in F}}\norm{a_1a_2\ldots a_kb^\perp}=0\quad\Rightarrow\quad b\in F.\]
Also recall that a subset $C$ of a vector space $X$ is \emph{convex} if $\epsilon a+(1-\epsilon)b\in C$ whenever $a,b\in C$ and $\epsilon\in(0,1)$ and $F\subseteq C$ is a \emph{face} of $C$ if converse also holds, i.e. if, for all $a,b\in C$ and $\epsilon\in(0,1)$,
\[a,b\in F\quad\Leftrightarrow\quad\epsilon a+(1-\epsilon)b\in F\]
(for faces it actually suffices to take $\epsilon=\frac{1}{2}$ or any other fixed element of $(0,1)$).

\begin{thm}\label{FilterEqvs}
For $F\subseteq A^1_+$, the following are equivalent.
\begin{enumerate}
\item\label{dd} $F$ is a $\mathbf{d}$-filter.
\item\label{dh} $F$ is a $\mathbf{d}$-initial $\mathbf{h}$-filter.
\item\label{<=fil} $F$ is the norm closure of a $\mathbf{d}$-initial $\leq$-filter.
\item\label{prodfil} $F$ is norm closed, $\leq$-closed and $\odot$-closed.\footnote{Note \eqref{prodfil}$\Rightarrow$\eqref{normfil} eliminates the real rank zero hypothesis from \cite{Bice2011} Proposition 3.5.}
\item\label{convex} $F$ is norm closed, $\leq$-closed, $^2$-closed and convex.
\item\label{face} $F$ is a norm closed $\mathbf{d}$-cofinal face.
\item\label{normfil} $F$ is a norm filter.

\setcounter{cnt}{\theenumi}
\end{enumerate}
If $A$ is separable or commutative, they are also equivalent to the following.
\begin{enumerate}
\setcounter{enumi}{\thecnt}
\item\label{llllfilter} $F$ is the norm closure of a $\ll$-filter.
\end{enumerate}
\end{thm}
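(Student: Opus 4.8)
The plan is to treat \eqref{dd} together with its immediate reformulation \eqref{dh} as the core of the argument and cycle out to and back from the remaining conditions, with the estimates of \autoref{D} — principally \eqref{h<2d}, \eqref{d2<hd}, \eqref{d2<dh} and \autoref{aba=a+b} — serving as the engine throughout.

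I would first record the key preliminary: every $\mathbf{d}$-directed set is automatically $\mathbf{d}$-initial, since applying $\mathbf{d}$-directedness with $a=b$ gives $\inf_{c\in F}2\mathbf{d}(c,b)=0$. Granting this, \eqref{dd}$\Leftrightarrow$\eqref{dh} is a matter of transporting directedness and closedness between $\mathbf{d}$ and $\mathbf{h}$: \eqref{h<2d} sends the $\mathbf{d}$-versions to the $\mathbf{h}$-versions, while \eqref{d2<hd} and \eqref{d2<dh} send them back provided $\mathbf{d}$-initiality is retained — for instance $\mathbf{d}(c',a)^2\le\mathbf{d}(c',c)+\mathbf{h}(c,a)$ shows that a $c\in F$ with $\mathbf{h}(c,a),\mathbf{h}(c,b)$ small, coupled with a $c'\in F$ that is $\mathbf{d}$-close to $c$ (available by $\mathbf{d}$-initiality), is $\mathbf{d}$-close to both $a$ and $b$. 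Since $\mathbf{h}\le\mathbf{e}$ and $\mathbf{h}$ quantifies $\le$, being an $\mathbf{h}$-filter already packages ``norm closed'' and ``$\le$-closed'', which is the backbone of \eqref{<=fil}--\eqref{convex}.

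For the multiplicative and convex conditions I would lean on \autoref{aba=a+b} together with the routine variants $a\odot a=a^2$, $aba=a\odot(a\odot b)$ and $\mathbf{d}(a\odot b)\approx\mathbf{d}a+\mathbf{d}b$: since $\mathbf{d}(aba)$, $\mathbf{d}(a\odot b)$, $\mathbf{d}a^2$ and $\mathbf{d}(\epsilon a+(1-\epsilon)b)$ are all uniformly equivalent to $\mathbf{d}a+\mathbf{d}b$ (resp.\ $\mathbf{d}a$), a $\mathbf{d}$-filter is closed under each of these operations — pick $c\in F$ with $\mathbf{d}(c,a)+\mathbf{d}(c,b)$ small, deduce $\mathbf{d}(c,\,\cdot\,)$ small for the relevant combination, and apply $\mathbf{d}$-closedness — and the face condition $\epsilon a+(1-\epsilon)b\in F\Rightarrow a,b\in F$ drops out of $\mathbf{d}$-initiality and $\mathbf{d}(\epsilon a+(1-\epsilon)b)\approx\mathbf{d}a+\mathbf{d}b$ the same way. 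The $\mathbf{d}$-cofinality in \eqref{face} I would get separately: if $b_0\in F$ then $b_\lambda:=b_0+(1-b_0)^{1/2}g_\lambda(1-b_0)^{1/2}$ lies between $b_0$ and $1$, hence in $F$ by $\le$-closedness, and $\norm{c(1-b_\lambda)}=\norm{c(1-b_0)^{1/2}(1-g_\lambda)(1-b_0)^{1/2}}\to0$ along an approximate unit $(g_\lambda)$. For \eqref{normfil} the device is a telescoping estimate: if $c\in F$ and $\mathbf{d}(c,a_i)<\epsilon$ for all $i\le k$ then, expanding $c-ca_1\cdots a_k$ as a telescoping sum, $\norm{c-ca_1\cdots a_k}<k\epsilon$, so in the presence of $\mathbf{d}$-directedness $\inf_{k,a_i\in F}\norm{a_1\cdots a_kb^\perp}=0$ is equivalent to $\inf_{c\in F}\mathbf{d}(c,b)=0$. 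For \eqref{<=fil} I would exhibit the $\le$-filter core $G=\{a\in A^1_+:\exists\,b\in F\text{ with }b\ll a\}$ (using $b\ll a\le a'\Rightarrow b\ll a'$ to see $G$ is $\le$-closed, $\mathbf{d}$-initiality of $F$ for $\mathbf{d}$-initiality of $G$, and $\mathbf{e}$-invariance of $\mathbf{d}$ for $\overline G^{\|\cdot\|}=F$), and conversely check that the norm closure of any $\mathbf{d}$-initial $\le$-filter is a $\mathbf{d}$-filter; finally \eqref{llllfilter} differs from \eqref{<=fil} only in demanding a genuine descending $\ll$-chain, which can be manufactured inside $G$ when $A$ is separable (take a sequence and thin it out using the uniform equivalences) or commutative (build it concretely), but not in general — cf.\ \cite[Corollary 6.6]{Bice2011}.

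The step I expect to be the main obstacle is the batch of reverse implications \eqref{prodfil},\eqref{convex},\eqref{face},\eqref{normfil}$\Rightarrow$\eqref{dd}: each of those conditions is ``static'' and does not visibly encode the existence of a common $\mathbf{d}$-lower bound for two members of $F$, so recovering $\mathbf{d}$-directedness is exactly where the uniform equivalences of \autoref{D} have to be pushed hardest, and routing the cycle back through \eqref{<=fil} (or through \eqref{face} via the $\mathbf{d}$-cofinality) is likely the cleanest way to do it. A minor point to watch is the improper filter $F=\varnothing$: it satisfies \eqref{dd}--\eqref{normfil} vacuously but fails $\mathbf{d}$-cofinality in \eqref{face}, so one should either restrict to $F\ne\varnothing$ or dispatch this case by hand.
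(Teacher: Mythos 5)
Your forward directions (\eqref{dd}$\Rightarrow$\eqref{dh},\eqref{prodfil},\eqref{convex},\eqref{face},\eqref{normfil}) and the telescoping argument for norm filters match the paper's route, and your explicit cofinality construction and the remark about $F=\varnothing$ are sound. But the proposal has two genuine gaps. The first is the batch you yourself flag as "the main obstacle": \eqref{prodfil},\eqref{convex},\eqref{face},\eqref{normfil}$\Rightarrow$\eqref{dd} are left unproved, with only the hope that "routing the cycle back through \eqref{<=fil}" will work. These are exactly where concrete devices are needed: for \eqref{convex}$\Rightarrow$\eqref{dd} and \eqref{prodfil}$\Rightarrow$\eqref{dd} one sets $c=\tfrac12(a+b)$ (resp.\ $c=a\odot b$) and uses that $c^{2^n}\in F$ with $\mathbf{d}(c^{2^n},c)\le 2^{-n}$, so $\mathbf{d}a+\mathbf{d}b\precapprox\mathbf{d}c$ produces the common $\mathbf{d}$-lower bound inside $F$; for \eqref{normfil}$\Rightarrow$\eqref{dd} one first gets $aba\in F$ from the norm-filter condition and then uses its powers the same way; and for \eqref{face} one needs the Akemann--Pedersen face arguments (order intervals $[a,b]\subseteq F$, stability of $F$ under continuous functional calculus fixing $0$ and $1$, and the $c_\epsilon b c_\epsilon$ trick) to get $\odot$-closedness and $\leq$-closedness, none of which appear in your sketch. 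Moreover, every "static $\Rightarrow$ $\mathbf{d}$-closed" step needs norm closed $+$ $\leq$-closed $\Rightarrow$ $\mathbf{h}$-closed, which rests on $\mathbf{h}\approx\mathbf{n}$ (\autoref{h=n}); that uniform equivalence is absent from your toolkit and cannot be replaced by \eqref{h<2d}, \eqref{d2<hd}, \eqref{d2<dh} alone.

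The second gap is \eqref{dd}$\Rightarrow$\eqref{<=fil}. You propose $G=\{a\in A^1_+:\exists\,b\in F,\ b\ll a\}$ and verify only $\leq$-closedness, $\mathbf{d}$-initiality and density; but a $\leq$-filter must be $\leq$-\emph{directed}, and that is the hard interpolation step: given $b\ll a$ and $b'\ll a'$ with $b,b'\in F$, nothing in your argument produces a single $c\leq a,a'$ with some member of $F$ compactly below $c$ (the one-sided substitutes $\mathbf{p}\approx\mathbf{h}$ give a lower bound under one element at a time, not both). The paper handles this by passing to the invertible elements of $F$ in the unitization and showing $d=c-(c-a)_+-(c-b)_+$ stays invertible and in $F$, then reducing the non-unital case via $aG'a$; some argument of this strength is unavoidable and is missing here. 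Even your density claim $\overline{G}=F$ tacitly uses that $f(a)\in F$ for continuous $f$ fixing $0$ and $1$ (to get $b\in F$ with $b\ll f_\epsilon(a)$), which itself comes from the face machinery you have not established. The separable case of \eqref{llllfilter} ("thin out a dense sequence") is also vaguer than what is needed (the paper sums $a=\sum 2^{-n}a_n$ and takes $f_n(a)$ with $f_1\gg f_2\gg\cdots$), but that is a lesser issue than the two above.
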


\begin{proof}\
\begin{itemize}
\item[\eqref{dd}$\Leftrightarrow$\eqref{dh}]  As $\mathbf{h}\precapprox\mathbf{d}$,
\begin{align*}
\text{$\mathbf{d}$-directed}\quad&\Rightarrow\quad\text{$\mathbf{h}$-directed}.\\
\text{$\mathbf{h}$-closed}\quad&\Rightarrow\quad\text{$\mathbf{d}$-closed}.
\end{align*}
If $F$ is $\mathbf{d}$-initial then
\[\inf_{b\in F}\mathbf{h}(b,a)=\inf_{b\in F}\inf_{c\in F}(\mathbf{d}(c,b)+\mathbf{h}(b,a))=\inf_{c\in F}(\mathbf{d}\circ\mathbf{h})(c,a).\]
So $0=\inf_{b\in F}\mathbf{h}(b,a)$ implies $0=\inf_{b\in F}\mathbf{d}(b,a)$, as $\mathbf{d}\precapprox\mathbf{d}\circ\mathbf{h}$, which yields the converse implications.

\item[\eqref{dd}$\Rightarrow$\eqref{convex}]  As $\mathbf{d}a^2\precapprox\mathbf{d}a$, $\mathbf{d}(\epsilon a+(1-\epsilon)b)\precapprox\mathbf{d}a+\mathbf{d}b$, and $\mathbf{h}\leq\mathbf{e}$ quantifies $\leq$,
\begin{align*}
\text{$\mathbf{d}$-filter}\quad&\Rightarrow\quad\text{$^2$-closed and convex}.\\
\label{norm<=closed}\text{$\mathbf{h}$-closed}\quad&\Rightarrow\quad\text{norm closed and $\leq$-closed}.
\end{align*}
Thus \eqref{dd}$\Rightarrow$\eqref{convex} now follows from \eqref{dd}$\Rightarrow$\eqref{dh}.

\item[\eqref{convex}$\Rightarrow$\eqref{dd}]  For any $a\in A^1_+$, $\mathbf{d}(a^{2^n},a)\rightarrow0$ as
\begin{equation}\label{a^na}
\mathbf{d}(a^n,a)=\norm{a^na^\perp}\leq\sup_{x\in[0,1]}x^n(1-x)\leq1/n.
\end{equation}
Taking $a=\frac{1}{2}(b+c)$, $\mathbf{d}b+\mathbf{d}c\precapprox\mathbf{d}a$, $\mathbf{d}(a^{2^n},a)\rightarrow0$ and $\mathbf{h}\precapprox\mathbf{n}$ yields
\begin{align*}
\text{$^2$-closed and convex}\quad&\Rightarrow\quad\text{$\mathbf{d}$-directed}.\\
\text{norm closed and $\leq$-closed}\quad&\Rightarrow\quad\text{$\mathbf{h}$-closed}.
\end{align*}
As $\mathbf{h}\precapprox\mathbf{d}$, $\mathbf{h}$-closed implies $\mathbf{d}$-closed.

\item[\eqref{dd}$\Rightarrow$\eqref{prodfil}]  As $\mathbf{d}(a\odot b)\precapprox\mathbf{d}\sqrt{a}+\mathbf{d}b\precapprox\mathbf{d}a+\mathbf{d}b$,
\[\text{$\mathbf{d}$-filter}\quad\Rightarrow\quad\text{$\odot$-closed}.\]
By \eqref{dd}$\Rightarrow$\eqref{dh}, any $\mathbf{d}$-filter is $\mathbf{h}$-closed and hence norm closed and $\leq$-closed.

\item[\eqref{prodfil}$\Rightarrow$\eqref{dd}]  Taking $c=a\odot b$, $\mathbf{d}a+\mathbf{d}b\precapprox\mathbf{d}\sqrt{a}+\mathbf{d}b\precapprox\mathbf{d}c$ and $\mathbf{d}(c^{2^n},c)\rightarrow0$ yields
\[\text{$\odot$-closed}\quad\Rightarrow\quad\text{$\mathbf{d}$-directed}.\]
As $\mathbf{d}\precapprox\mathbf{h}\precapprox\mathbf{n}$, norm closed and $\leq$-closed implies $\mathbf{d}$-closed.

\item[\eqref{normfil}$\Rightarrow$\eqref{dd}]  We immediately see that
\[\text{norm filter}\quad\Rightarrow\quad\text{$\mathbf{d}$-closed}.\]
For any $a,b\in F$, $\norm{(aba)^n(aba)^\perp}\rightarrow0$, by \eqref{a^na}, so $aba\in F$.  Thus $(aba)^{3^n}\in F$ too so, as $\mathbf{d}a+\mathbf{d}b\precapprox\mathbf{d}(aba)$,
\[\text{norm filter}\quad\Rightarrow\quad\text{$\mathbf{d}$-directed}.\]

\item[\eqref{dd}$\Rightarrow$\eqref{normfil}]  Assume $F\subseteq A^1_+$ is a $\mathbf{d}$-filter and take $b\in A^1_+$ with
\[\inf_{\substack{k\in\mathbb{N}\\ a_1,\ldots,a_k\in F}}\norm{a_1a_2\ldots a_kb^\perp}=0.\]
As $F$ is $\mathbf{d}$-directed, for any $a_1,\ldots,a_k\in F$ and $\epsilon>0$, we can find $a\in F$ with $\mathbf{d}(a,a_j)\leq\epsilon$, for all $j\leq k$, and hence
\begin{align*}
\mathbf{d}(a,b)&=\norm{a(a_k^\perp+a_k)b^\perp}\\
&\leq\norm{aa_k^\perp}+\norm{aa_kb^\perp}\\
&\leq\epsilon+\norm{a(a_{k-1}^\perp+a_{k-1})a_kb^\perp}\leq\cdots\\
&\leq k\epsilon+\norm{a_1a_2\ldots a_kb^\perp}.
\end{align*}
Thus $\inf_{a\in F}\mathbf{d}(a,b)=0$.  As $F$ is $\mathbf{d}$-closed, $b\in F$ so $F$ is a norm filter.

\item[\eqref{face}$\Rightarrow$\eqref{prodfil}]  Assume $F$ is a norm closed face of $A^1_+$.  We first claim
\[a,b\in F\quad\Rightarrow\quad[a,b]\subseteq F,\]
where $[a,b]=\{c\in A^1_+:a\leq c\leq b\}$.  For if $c\in[a,b]$ then $d=a+b-c\in A^1_+$ and $\frac{1}{2}(c+d)=\frac{1}{2}(a+b)\in F$ and hence $c\in F$.  Next we claim that
\[a\in F\quad\Rightarrow\quad f(a)\subseteq F,\]
for any continuous $f$ on $[0,1]$ taking $0$ to $0$ and $1$ to $1$, as in \cite[Lemma 2.1]{AkemannPedersen1992}.  For let $g(a)=(2a-1)_+$ and $h(a)=f(a^\perp)^\perp$.  Then $\frac{1}{2}(g(a)+h(a))=a$ so $g(a),h(a)\in F$.  Thus $g^n(a),h^n(a)\in F$, for all $n$, and hence
\[f(a)\in\overline{\bigcup[g^n(a),h^n(a)]}\subseteq F.\]
Now we claim $F$ is $\odot$-closed, as in the proof of \cite[Theorem 2.9]{AkemannPedersen1992}.  For given $a,b\in F$ and $\epsilon\in(0,1)$, let $c_\epsilon=(\epsilon\sqrt{a}+(1-\epsilon)b)\in F$ so
\[\epsilon c_\epsilon\sqrt{a}c_\epsilon+(1-\epsilon)c_\epsilon bc_\epsilon=c_\epsilon^3\in F\]
and hence $c_\epsilon bc_\epsilon\in F$.  As $\epsilon\rightarrow1$, $c_\epsilon bc_\epsilon\rightarrow a\odot b\in F$, as required.

To see that $F$ is $\leq$-closed when $F$ is also $\mathbf{d}$-cofinal, take $a\in A^1_+$ with $a\geq b\in F$ and $(c_n)\subseteq F$ with $\mathbf{d}(a,c_n)\rightarrow0$.  Then $c_n\odot b\in F$ and $c_n\odot b\leq c_n\odot a\leq c_n$ so $c_n\odot a\rightarrow a\in F$, by our first claim above.

\item[\eqref{dd}$\Rightarrow$\eqref{face}]  Take a $\mathbf{d}$-filter $F$. By \eqref{dd}$\Rightarrow$\eqref{convex}, $F$ is norm closed and convex.  As $F$ is $\mathbf{d}$-initial, $\mathbf{d}$-closed and $\mathbf{d}a+\mathbf{d}b\precapprox\mathbf{d}(\epsilon a+(1-\epsilon)b)$, $F$ is also a face of $A^1_+$.  As $A$ has an approximate unit, for any $a,b\in A^1_+$ and $\epsilon>0$ we have $c\in A^1_+$ with $\mathbf{d}(a,c),\mathbf{d}(b,c)<\epsilon$.  In particular this applies to $b\in F$ and, as $\mathbf{n}\precapprox\mathbf{h}\precapprox\mathbf{d}$, we may take $c\geq b$.  As $F$ is $\leq$-closed, $c\in F$ which shows that $F$ is $\mathbf{d}$-cofinal.

\item[\eqref{<=fil}$\Rightarrow$\eqref{dh}]  As $\mathbf{d}$ and $\mathbf{h}$ are $\mathbf{e}$-invariant and $\mathbf{n}\precapprox\mathbf{h}$, for any $F\subseteq A^1_+$,
\begin{align*}
F\text{ is $\mathbf{d}$-initial}\quad&\Rightarrow\quad\overline{F}\text{ is $\mathbf{d}$-initial}.\\
F\text{ is $\leq$-directed}\quad&\Rightarrow\quad\overline{F}\text{ is $\mathbf{h}$-directed}.\\
F\text{ is $\leq$-closed}\quad&\Rightarrow\quad\overline{F}\text{ is $\mathbf{h}$-closed}.
\end{align*}

\item[\eqref{dd}$\Rightarrow$\eqref{<=fil}]  Take a $\mathbf{d}$-filter $F$ and assume first that $A$ is unital.  Consider the invertible elements $G$ of $F$.  For every $a\in F$ and $\epsilon>0$, $(1-\epsilon)a+\epsilon\in G$ so $\overline{G}=F$.  As $F$ is $\leq$-closed and $\mathbf{d}$-initial, so is $G$.  It only remains to show that $G$ is $\leq$-directed.  So take $a,b\in G$.  For some $\epsilon>0$ and $a',b'\in A^1_+$, we have $a=\epsilon+(1-\epsilon)a'$ and $b=\epsilon+(1-\epsilon)b'$.  As $F$ is a face of $A^1_+$ containing $1$, $a',b'\in F$.  As $F$ is $\mathbf{h}$-directed, we have $c'\in F$ with $\mathbf{h}(c',a'),\mathbf{h}(c',b')<\epsilon/2$.  Letting $c=\epsilon+(1-\epsilon)c'\in F$, we thus have $\mathbf{h}(c,a),\mathbf{h}(c,b)<\epsilon/2$.  Thus $d=c-(c-a)_+-(c-b)_+$ is an invertible element of $A^1_+$ (as $c\geq\epsilon$ and $\norm{c-d}<\epsilon$) and we further claim that $d\in F$ and hence $d\in G$.  Indeed, for any $\delta>0$, we have $e\in F$ with $\mathbf{d}(e,a),\mathbf{d}(e,b),\mathbf{d}(e,c)<\delta$.  Thus $\norm{e(c-a)_+}\leq\norm{e(c-a)}<2\delta$ and $\norm{e(c-b)_+}<2\delta$ so $\mathbf{d}(e,d)<5\delta$.  As $\delta>0$ was arbitrary and $F$ is $\mathbf{d}$-closed, we have $d\in F$, as required.

If $A$ is not unital then first extend $F$ to a $\mathbf{d}$-filter $F'$ in $\widetilde{A}^1_+$ by taking the (upwards) $\leq$-closure.  By what we just proved, the invertible elements $G'$ of $F'$ are a $\leq$-filter with $\overline{G'}=F'$.  In particular, $G'$ is $\mathbf{d}$-coinitial in $F'$.  Thus, for any $a\in F$, $aG'a$ is $\mathbf{d}$-coinitial in $F$.  Hence the $\leq$-closure $G$ of $aG'a$ in $A^1_+$ is a $\mathbf{d}$-initial $\leq$-filter in $A^1_+$ with $\overline{G}=F$, by \autoref{h=n}.

\item[\eqref{dd}$\Rightarrow$\eqref{llllfilter}]  If $A$ is separable then we can take dense $(a_n)\subseteq F$ and let $a=\sum2^{-n}a_n\in F$.  As noted above, $f(a)\in F$, for any continuous $f$ on $[0,1]$ taking $0$ to $0$ and $1$ to $1$.  Thus by choosing such $(f_n)$ converging pointwise to $0$ everywhere except at $1$ and satisfying $f_1\gg f_2\gg\ldots$, the (upwards) $\ll$-closure $G$ of $(f_n(a))$ is a $\ll$-filter with $F=\overline{G}$.

Now take
\[G=\{a\in F:a\gg b\in F\}.\]
Again, if $a\in F$ then $f(a)\in F$, for any continuous $f$ on $[0,1]$ taking $0$ to $0$ and $1$ to $1$.  In particular, for any $\epsilon>0$ we can take $f(x)=(1+\epsilon)x\wedge1$ and $g(x)=(\epsilon^{-1}(x-1)+1)_+$ so $f(a)\gg g(a)\in F$ and hence $f(a)\in G$.  As $\epsilon\rightarrow0$, $f(a)\rightarrow a$ so $F=\overline{G}$.  Likewise, if $a\gg b\in F$ then $a\gg f(b)\gg g(b)\in F$ so
\[G=\{a\in F:a\gg b\gg c\in F\}.\]
If $A$ is commutative, $a\gg b$ and $a'\gg b'$ implies $aa'\gg bb'$.  For $a,a'\in G$, we have $b,b',c,c'\in F$ with $a\gg b\gg c$ and $a'\gg b'\gg c'$ so $bb'\gg cc'\in F$ and hence $a,a'\gg bb'\in G$, i.e. $G$ is $\ll$-directed and hence a $\ll$-filter.

\item[\eqref{llllfilter}$\Rightarrow$\eqref{<=fil}]  As $\mathbf{d}$ quantifies $\ll$ and $\ll\circ\leq\ \ \subseteq\ \ \ll\ \ \subseteq\ \ \leq$,
\begin{align*}
\text{$\ll$-initial}\quad&\Rightarrow\quad\text{$\mathbf{d}$-initial}.\\
\text{$\ll$-closed and $\ll$-initial}\quad&\Rightarrow\quad\text{$\leq$-closed}.\\
\text{$\ll$-directed}\quad&\Rightarrow\quad\text{$\leq$-directed}.\qedhere
\end{align*}
\end{itemize}
\end{proof}

In \eqref{prodfil} and \eqref{convex}, we could not replace `$\leq$-closed' with `$\ll$-closed'.  For example, the norm closure $C$ of the convex combinations of the functions $x^n$ in $C([0,1])$, for $n\in\mathbb{N}$, satisfies these conditions \textendash\, as every $f\in C$ is positive on $(0,1]$, $C$ is vacuously $\ll$-closed \textendash\, however $C$ is not $\leq$-closed, being bounded above by $x$.  Although we could replace `norm closed and $\leq$-closed' with `$\mathbf{h}$-closed' or `$\mathbf{d}$-closed'.

Furthermore, not every $\mathbf{d}$-filter is the norm closure of a $\ll$-filter.  Indeed, if this were the case then, for any non-unital $A$, the $\mathbf{d}$-filter $\{1-a:a\in A^1_+\}$ in $\widetilde{A}^1_+$ would be the norm closure of a $\ll$-filter $F$.  Then $1-F$ would be an `almost idempotent' approximate unit of $A$ in the sense of \cite[II.4.1.1]{Blackadar2017}.  However, a $\mathrm{C}^*$-algebra was recently constructed in \cite{BiceKoszmider2017} that does not possess such an approximate unit.  All $\omega_1$-unital $\mathrm{C}^*$-algebras have them though, by another result from \cite{BiceKoszmider2017}, so \eqref{llllfilter} could be extended to any $A$ with a dense subset of size $\leq\omega_1$.

We can at least say a bit more in the commutative case.

\begin{prp}
If $A$ is commutative and $F\subseteq A^1_+$ is a $\mathbf{d}$-filter then
\[G=\{a\in F:a\gg b\in F\}\]
is the unique $\ll$-filter with $F=\overline{G}$.
\end{prp}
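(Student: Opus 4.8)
Since the proof of \autoref{FilterEqvs} (implication \eqref{dd}$\Rightarrow$\eqref{llllfilter}) already exhibits the displayed set $G$ as a $\ll$-filter with $F=\overline G$ in the commutative case, only uniqueness requires proof. So the plan is to fix an arbitrary $\ll$-filter $H$ with $\overline H=F$ and show $H=G$.

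The first step is a purely order-theoretic identity, valid for any $\ll$-filter: $H=\{a\in A^1_+:a\gg h\text{ for some }h\in H\}$. Indeed, $\ll$-closedness of $H$ is exactly the inclusion $\supseteq$, while applying $\ll$-directedness with both arguments equal to a given $h\in H$ produces $h'\in H$ with $h'\ll h$, which gives $\subseteq$. This reduces the problem to showing that $G$ coincides with $\{a\in A^1_+:a\gg h\text{ for some }h\in H\}$.

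One inclusion is formal: if $a\gg h\in H$, then $h\leq a$ since $\ll\ \subseteq\ \leq$, and $h\in H\subseteq\overline H=F$, which is $\leq$-closed by \autoref{FilterEqvs} (as $F$, being a $\mathbf{d}$-filter, satisfies \eqref{convex}); hence $a\in F$ and therefore $a\in G$. The reverse inclusion is the crux and the only place commutativity enters. Given $a\in G$, fix $b\in F$ with $a\gg b$; since $b\in F=\overline H$ we may pick $h_0\in H$ with $\norm{h_0-b}<1$, and $\ll$-directedness of $H$ yields $h\in H$ with $h\ll h_0$. Identifying $A$ with $C_0(X)$, for any $x\in X$: if $h(x)>0$ then $h_0(x)=1$ (as $h=hh_0$), hence $b(x)\geq 1-\norm{h_0-b}>0$, hence $a(x)=1$ (as $b=ba$); so $ah=h$, i.e.\ $a\gg h$ with $h\in H$, as needed. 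Combined with the identity above this gives $G=H$.

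The main obstacle is precisely this last inclusion. Norm-density of $H$ in $F$ supplies approximants $h_0$ of the witness $b$ but gives no control over where $h_0$ is supported, so one cannot arrange $a\gg h_0$ directly. The device that overcomes this is to replace $h_0$ by an element $h\ll h_0$ produced by $\ll$-directedness: such an $h$ is forced to vanish wherever $h_0\neq 1$, and the quantitative bound $\norm{h_0-b}<1$ confines $\{h_0=1\}$ inside $\{b>0\}$, on which $a\equiv 1$ because $a\gg b$. Everything else — the reduction to the identity and the soft inclusion — is routine.
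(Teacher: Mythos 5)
Your proof is correct, and its uniqueness argument takes a genuinely different route from the paper's. The paper works through the compact set $C=\bigcap_{f\in F}f^{-1}\{1\}$: it first shows $C=\bigcap_{g\in G}g^{-1}\{1\}$ for any $\ll$-filter $G$ dense in $F$, then uses a finite-cover compactness argument on $g^{-1}[\tfrac{1}{2},1]\setminus f^{-1}\{1\}^\circ$ together with $\ll$-directedness to show that every $f\in A^1_+$ with $C\subseteq f^{-1}\{1\}^\circ$ lies in $G$, and finally sandwiches $G$ between $\{f\in F:f\gg g\in F\}$ and this set. You avoid $C$ and the covering argument entirely: after the order-theoretic identity $H=\{a\in A^1_+:a\gg h\text{ for some }h\in H\}$ and the soft inclusion $H\subseteq G$, your crux is the quantitative pointwise step \textendash\, from $a\gg b\in F=\overline{H}$ pick $h_0\in H$ with $\norm{h_0-b}<1$ and $h\in H$ with $h\ll h_0$, so that $\{h>0\}\subseteq\{h_0=1\}\subseteq\{b>0\}\subseteq\{a=1\}$, giving $a\gg h$ and hence $a\in H$ by $\ll$-closedness. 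All steps check out (including the implicit use of $\ll\ \subseteq\ \leq$ on $A^1_+$ and the $\leq$-closedness of $F$ from \autoref{FilterEqvs}\eqref{convex}), and like the paper you legitimately defer existence to the proof of \eqref{dd}$\Rightarrow$\eqref{llllfilter}. What each approach buys: yours is shorter and more elementary, needing no local compactness of the spectrum beyond the Gelfand picture; the paper's, at the cost of the covering argument, additionally identifies $G$ explicitly as $\{f\in A^1_+:C\subseteq f^{-1}\{1\}^\circ\}$, the description that matches the compact-projection correspondence of \autoref{CP}.
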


\begin{proof}
The only thing left to show is uniqueness.  By the Gelfand represtentation, we may assume that $A=C_0(X)$ for some locally compact Hausdorff $X$, so
\[f\ll g\qquad\Leftrightarrow\qquad X\setminus f^{-1}\{0\}\subseteq g^{-1}\{1\}.\]
Take a $\mathbf{d}$-filter $F\subseteq A^1_+$ and let
\[C=\bigcap_{f\in F}f^{-1}\{1\}.\]
For any $\ll$-filter $G$ with $\overline{G}=F$, we must also have $C=\bigcap_{g\in G}g^{-1}\{1\}$.  Otherwise, we could pick some $x\in\bigcap_{g\in G}g^{-1}\{1\}\setminus C$ and $f\in F$ with $f(x)\neq1$ and then $\norm{f-g}\geq g(x)-f(x)=1-f(x)>0$, for all $g\in G$, contradicting $\overline{G}=F$.  

Take $f\in A^1_+$ with $C\subseteq f^{-1}\{1\}^\circ$.  For every $x\in X\setminus f^{-1}\{1\}^\circ$, we have $g_x\in G$ with $g_x(x)\neq1$.  Thus we can pick arbitrary $g\in G$ and cover the compact set $g^{-1}[\frac{1}{2},1]\setminus f^{-1}\{1\}^\circ$ with finitely many open sets $X\setminus g_{x_1}^{-1}\{1\},\ldots,X\setminus g_{x_k}^{-1}\{1\}$.  As $G$ is $\ll$-directed, we have some $h\in G$ with $h\ll g,g_{x_1},\ldots,g_{x_k}$ and hence
\[X\setminus h^{-1}\{0\}\subseteq g^{-1}\{1\}\cap g_{x_1}^{-1}\{1\}\cap\ldots\cap g_{x_k}^{-1}\{1\}\subseteq f^{-1}\{1\}^\circ\subseteq f^{-1}\{1\},\]
i.e., $h\ll f$.  Thus $f\in G$, as $G$ is $\ll$-closed, so
\[\{f\in F:f\gg g\in F\}\subseteq\{f\in A^1_+:C\subseteq f^{-1}\{1\}^\circ\}\subseteq G.\]
Conversely, $G\subseteq\{f\in F:f\gg g\in F\}$, as $G$ is a $\ll$-filter contained in $F$.
\end{proof}

This does not extend to non-commutative $A$, i.e.
\[G=\{a\in F:a\gg b\in F\}\]
may fail to be a $\ll$-filter and $F$ may contain various dense $\ll$-filters.  For example, consider $A=C([0,1],M_2)$ and take everywhere rank 1 projections $p,q\in A$ with $p(0)=P=q(0)$ but $p(x)\neq q(x)$, for all $x>0$.  Also take continuous $f_n$ on $[0,1]$ with $f_1\gg f_2\gg\ldots$ and $\bigcap_n f^{-1}\{1\}=\{0\}$.  Then the $\ll$-closures $F,G\subseteq A^1_+$ of $(f_np)$ and $(f_nq)$ respectively are distinct $\ll$-filters with $\overline{F}=\overline{G}=\{a\in A^1_+:a(0)\geq P\}$.

\begin{dfn}
We say $Y$ \emph{generates} a $\mathbf{D}$-filter $F\subseteq X$ if $F$ is the smallest $\mathbf{D}$-filter containing $Y$.  We call $X$ a \emph{$\mathbf{D}$-semilattice} if every $Y\subseteq X$ generates a $\mathbf{D}$-filter.
\end{dfn}

For posets, $\leq$-semilattices are precisely the meet semilattices in the usual sense.

\begin{prp}
If $\leq$ is a partial order on $X$ then
\[X\text{ is a $\leq$-semilattice}\qquad\Leftrightarrow\qquad\text{every $x,y\in X$ has an infimum }x\wedge y\in X.\]
\end{prp}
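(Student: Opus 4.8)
The plan is first to unwind the definitions. Taking $\mathbf{D}$ to be the characteristic function of $\leq$, the condition $\inf_{c\in Y}\mathbf{D}(c,b)=0$ just says that some $c\in Y$ satisfies $c\leq b$, and $\inf_{c\in Y}(\mathbf{D}(c,a)+\mathbf{D}(c,b))=0$ just says that some $c\in Y$ is a common lower bound of $a$ and $b$. Hence a $\leq$-filter is precisely an upwards-closed, downwards-directed subset of $X$, so what must be shown is that every subset of $X$ is contained in a smallest such set exactly when every two-element subset of $X$ has an infimum.

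For the forward direction, suppose $X$ is a $\leq$-semilattice and fix $x,y\in X$. Let $F$ be the $\leq$-filter generated by $\{x,y\}$. Downwards directedness yields some $c\in F$ with $c\leq x$ and $c\leq y$, and the claim will be that $c=x\wedge y$. The key observation is that for any lower bound $d$ of $\{x,y\}$ the principal up-set $\{z\in X:d\leq z\}$ is again a $\leq$-filter (it is clearly upwards closed, and $d$ itself witnesses directedness) which contains $\{x,y\}$; so minimality of $F$ forces $F\subseteq\{z\in X:d\leq z\}$, whence $d\leq c$. Thus $c$ is a greatest lower bound of $\{x,y\}$, unique by antisymmetry.

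For the converse, assume every pair, and hence (by an easy induction) every nonempty finite subset, of $X$ has a meet. Given $Y\subseteq X$ with $Y\neq\emptyset$ — the case $Y=\emptyset$ being trivial since $\emptyset$ is vacuously a $\leq$-filter — I would put
\[F=\{z\in X:\textstyle\bigwedge S\leq z\text{ for some nonempty finite }S\subseteq Y\}\]
and verify that $Y\subseteq F$, that $F$ is upwards closed, and that $F$ is downwards directed, the last because for nonempty finite $S,S'\subseteq Y$ the element $\bigwedge(S\cup S')$ lies in $F$ and below both $\bigwedge S$ and $\bigwedge S'$. Finally, a short induction on $|S|$ shows that any $\leq$-filter $G$ containing $Y$ must contain $\bigwedge S$ for every nonempty finite $S\subseteq Y$ — directedness produces a common lower bound of finitely many elements of $G$, that bound sits below their meet, and upwards closure then pulls the meet into $G$ — so $G\supseteq F$, making $F$ the smallest $\leq$-filter containing $Y$.

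The one mildly delicate step is the forward direction's identification of the directedness witness $c$ with an honest greatest lower bound; the principal up-sets $\{z\in X:d\leq z\}$, used as test filters, are exactly what makes this work. The remaining verifications — closure and directedness of $F$, and the finite induction — are routine.
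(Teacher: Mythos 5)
Your proof is correct and follows essentially the same route as the paper: the converse is the paper's ``$\leq$-closure of the $\wedge$-closure'' construction (your $F$ of upper bounds of finite meets), and your forward direction uses the same key device of principal up-sets $\{z\in X:d\leq z\}$ of lower bounds as test filters, merely phrased directly rather than via the paper's contrapositive with the intersection $\bigcap_{z\leq x,y}\{w:z\leq w\}$. No gaps; the extra care you take with $Y=\emptyset$ and the finite-meet inductions is fine but routine.
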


\begin{proof}
If every $x,y\in X$ has an infimum $x\wedge y\in X$ then the $\leq$-closure of the $\wedge$-closure of any $Y\subseteq X$ is the $\leq$-filter generated by $Y$, so $X$ is a $\leq$-semilattice.  If some $x,y\in X$ have no infimum then
\[\bigcap_{z\leq x,y}\{w\in X:z\leq w\}\]
is an intersection of $\leq$-filters containing $x$ and $y$ but no lower bound of $x$ and $y$.  Thus $\{x,y\}$ does not generate a $\leq$-filter and hence $X$ is not a $\leq$-semilattice.
\end{proof}

\begin{prp}\label{dsemilat}
$A^1_+$ is a $\mathbf{d}$-semilattice.
\end{prp}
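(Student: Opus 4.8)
The plan is to exploit the fact that a $\mathbf{d}$-filter is, by the equivalence \eqref{dd}$\Leftrightarrow$\eqref{prodfil} of \autoref{FilterEqvs}, exactly a subset of $A^1_+$ that is norm closed, $\leq$-closed and $\odot$-closed, and each of these three conditions is manifestly stable under arbitrary intersections. So I would first observe that $A^1_+$ is itself a $\mathbf{d}$-filter: it is norm closed, trivially $\leq$-closed (if $a\in A^1_+$ and $a\leq b\in A^1_+$ then $b\in A^1_+$), and $\odot$-closed since $0\leq\sqrt{a}b\sqrt{a}\leq\sqrt{a}\sqrt{a}=a\leq1$ for $a,b\in A^1_+$. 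Consequently, for any $Y\subseteq A^1_+$ the collection $\mathcal{F}_Y$ of $\mathbf{d}$-filters containing $Y$ is nonempty.

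Next I would check that $F:=\bigcap\mathcal{F}_Y$ is again a $\mathbf{d}$-filter. Writing $F=\bigcap_i F_i$ with each $F_i$ a $\mathbf{d}$-filter, hence norm closed, $\leq$-closed and $\odot$-closed by \autoref{FilterEqvs}\eqref{prodfil}: an intersection of norm closed sets is norm closed; if $a\in F$ and $a\leq b\in A^1_+$ then $a\in F_i$ and so $b\in F_i$ for every $i$, whence $b\in F$, so $F$ is $\leq$-closed; and if $a,b\in F$ then $a,b\in F_i$ and so $a\odot b\in F_i$ for every $i$, whence $a\odot b\in F$, so $F$ is $\odot$-closed. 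Applying \eqref{prodfil}$\Rightarrow$\eqref{dd} of \autoref{FilterEqvs}, $F$ is a $\mathbf{d}$-filter, and by construction it contains $Y$ and is contained in every $\mathbf{d}$-filter containing $Y$. Thus $F$ is the $\mathbf{d}$-filter generated by $Y$, and since $Y$ was arbitrary, $A^1_+$ is a $\mathbf{d}$-semilattice.

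I do not expect any genuine obstacle here: the substantive work has already been carried out in \autoref{FilterEqvs}. The one point worth flagging is that the argument should be run through the algebraic characterization \eqref{prodfil} rather than the defining description of a $\mathbf{d}$-filter as "$\mathbf{d}$-directed and $\mathbf{d}$-closed"; the clauses of \eqref{prodfil} are universally quantified conditions on elements of $F$ and hence automatically inherited by intersections, whereas $\mathbf{d}$-directedness is an infimum (existential-type) condition that need not survive intersection, so a direct attack would be awkward.
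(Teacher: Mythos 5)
Your proof is correct, but it takes the opposite (top-down) route to the paper's. The paper constructs the generated filter bottom-up: for $Y\subseteq A^1_+$ it takes the $\odot$-closure $D$ of $Y$, which is $\mathbf{d}$-directed and contained in every $\mathbf{d}$-filter containing $Y$ (since $\mathbf{d}$-filters are $\odot$-closed by \autoref{FilterEqvs}), and then takes the $\mathbf{d}$-closure of $D$, which is a $\mathbf{d}$-filter and still contained in every $\mathbf{d}$-filter containing $Y$ (since $\mathbf{d}$-filters are $\mathbf{d}$-closed). You instead intersect all $\mathbf{d}$-filters containing $Y$ and verify the intersection is a $\mathbf{d}$-filter via the characterization \eqref{prodfil}; your observation that the universally quantified closure conditions (norm closed, $\leq$-closed, $\odot$-closed) pass to arbitrary intersections while $\mathbf{d}$-directedness need not is exactly the right point, and you correctly secure non-emptiness of the family by checking $A^1_+$ is itself a $\mathbf{d}$-filter (a step the paper's direct construction does not need). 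Both arguments ultimately rest on \autoref{FilterEqvs}, though yours uses both directions of \eqref{dd}$\Leftrightarrow$\eqref{prodfil}, whereas the paper only needs the directedness estimates behind \eqref{prodfil}$\Rightarrow$\eqref{dd} together with the $\odot$- and $\mathbf{d}$-closedness of $\mathbf{d}$-filters. What the paper's version buys is an explicit description of the generated filter as the $\mathbf{d}$-closure of the $\odot$-closure of $Y$, which it reuses later (for instance to see that a norm-centred set generates a \emph{proper} $\mathbf{d}$-filter, where properness is read off from the construction); your intersection argument is shorter and cleaner given \autoref{FilterEqvs}, but being non-constructive it does not immediately yield such information.
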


\begin{proof}
For any $B\subseteq A^1_+$, let $D$ be the $\odot$-closure of $B$, so $D$ is $\mathbf{d}$-directed and every $\mathbf{d}$-filter containing $B$ must contain $D$.  Let $F$ by the $\mathbf{d}$-closure of $D$, so $F$ is a $\mathbf{d}$-filter and every $\mathbf{d}$-filter containing $B$ and hence $D$ contains $F$, i.e., $F$ is the $\mathbf{d}$-filter generated by $B$.
\end{proof}

\begin{dfn}
We call $Y\subseteq X$ \emph{$\mathbf{D}$-centred} if, for all $y_1,\ldots,y_k\subseteq Y$,
\[\inf_{x\in X}\mathbf{D}(x,y_1)+\ldots+\mathbf{D}(x,y_k)=0.\]
\end{dfn}

If $\leq$ is a partial order on $X$ then $Y\subseteq X$ is $\leq$-centred if and only if every finite subset of $Y$ has a lower bound in $X$, i.e., if and only if $Y$ is centred in the usual order theoretic sense.

As with filters, we see that the centred subsets of $\mathrm{C}^*$-algebras considered in \cite{Bice2011} are precisely the $\mathbf{d}$-centred subsets, this time in the positive unit sphere $A^{=1}_+$ rather than the positive unit ball $A^1_+$. Specifically, recall that $C\subseteq A^{=1}_+$ is \emph{norm centred}, according to \cite[Definition 2.1]{Bice2011}, if the multiplicative closure of $C$ is contained in the unit sphere.

\begin{prp}
For $C\subseteq A^{=1}_+$, the following are equivalent.
\begin{enumerate}
\item\label{dcen} $C$ is $\mathbf{d}$-centred in $A^{=1}_+$.
\item\label{hcen} $C$ is $\mathbf{h}$-centred in $A^{=1}_+$.
\item\label{ncen} $C$ is norm centred.
\item\label{pfil} $C$ is generates a proper $\mathbf{d}$-filter in $A^1_+$.
\end{enumerate}
\end{prp}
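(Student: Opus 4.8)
The plan is to establish the cycle \eqref{dcen}$\Rightarrow$\eqref{hcen}$\Rightarrow$\eqref{ncen}$\Rightarrow$\eqref{dcen} together with the equivalence \eqref{dcen}$\Leftrightarrow$\eqref{pfil}. Everything hinges on two preliminary facts. First, iterating \autoref{aba=a+b} (noting that $cac=c^{2}\odot a$ stays positive and of norm $\leq1$) shows that for $a_{1},\dots,a_{k}\in A^{1}_{+}$ one has $\mathbf{d}a_{1}+\dots+\mathbf{d}a_{k}\approx\mathbf{d}w$ on $A^{1}_{+}$, where $w=w(a_{1},\dots,a_{k})$ is the nested product given by $w^{(1)}=a_{1}$, $w^{(j+1)}=w^{(j)}a_{j+1}w^{(j)}$; crucially $w$ is positive, of norm $\leq1$, and lies in both the multiplicative closure and the $\odot$-closure of $\{a_{1},\dots,a_{k}\}$. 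Second, for positive $w\in A^{1}_{+}$ one has $\inf_{x\in A^{=1}_{+}}\mathbf{d}(x,w)=0\iff\norm{w}=1$: for $\Leftarrow$ I would take $x=f(w)$ for continuous $f\colon[0,1]\to[0,1]$ with $f(1)=1$ supported near $1$ (so $f(0)=0$, hence $x\in A^{=1}_{+}$ even when $A$ is non-unital) and compute $\mathbf{d}(x,w)=\sup_{t\in\sigma(w)}f(t)(1-t)\to0$; for $\Rightarrow$ one uses $w\leq\norm{w}$ in $\widetilde{A}$, so $\mathbf{d}(x,w)=\norm{x(1-w)}\geq\norm{x(1-w)x}\geq(1-\norm{w})\norm{x}^{2}=1-\norm{w}$ for every $x\in A^{=1}_{+}$.

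Granting these, three of the steps are short. For \eqref{ncen}$\Rightarrow$\eqref{dcen}: given $a_{1},\dots,a_{k}\in C$, the word $w(a_{1},\dots,a_{k})$ is a product of elements of $C$, so $\norm{w}=1$, so $\inf_{x\in A^{=1}_{+}}\mathbf{d}(x,w)=0$, and $\mathbf{d}a_{1}+\dots+\mathbf{d}a_{k}\precapprox\mathbf{d}w$ transfers this to the $a_{j}$. The implication \eqref{dcen}$\Rightarrow$\eqref{hcen} is immediate from $\mathbf{h}\leq2\mathbf{d}$ \eqref{h<2d}. For \eqref{dcen}$\Leftrightarrow$\eqref{pfil}: by \autoref{dsemilat} the $\mathbf{d}$-filter $F$ generated by $C$ is the $\mathbf{d}$-closure of the $\odot$-closure $D$ of $C$, so $F$ is proper iff $0\notin F$ iff $\inf_{d\in D}\norm{d}>0$ (as $\mathbf{d}(d,0)=\norm{d}$). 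If $C$ is $\mathbf{d}$-centred then, since $\mathbf{d}(a\odot b)\precapprox\mathbf{d}a+\mathbf{d}b$, an easy induction gives $\mathbf{d}d\precapprox\sum_{a}\mathbf{d}a$ (sum over the finitely many $a\in C$ used to build any $d\in D$), whence $\inf_{x\in A^{=1}_{+}}\mathbf{d}(x,d)=0$ and so $\norm{d}=1$; thus $\inf_{d\in D}\norm{d}=1$. Conversely, if $C$ is not $\mathbf{d}$-centred, then some word $w=w(a_{1},\dots,a_{k})\in D$ has $\norm{w}<1$, and its powers $w^{n}=w\odot w^{n-1}\in D$ satisfy $\norm{w^{n}}=\norm{w}^{n}\to0$, so $0\in F$.

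The main obstacle is \eqref{hcen}$\Rightarrow$\eqref{ncen}, which cannot be handled by the $\approx$-calculus alone, since $\mathbf{d}$- and $\mathbf{h}$-centredness directly control only the special nested words, not arbitrary products. Here I would fix $a_{1},\dots,a_{k}\in C$ and, for given $\epsilon>0$, use $\mathbf{h}$-centredness to produce $x\in A^{=1}_{+}$ with $\mathbf{h}(x,a_{j})<\epsilon$ for all $j$, together with a state $\psi$ on $\widetilde{A}$ with $\psi(x)=1$ (which exists since $\norm{x}=1$). From $a_{j}\geq x-(x-a_{j})_{+}\geq x-\epsilon$ and $a_{j}\leq1$ one gets $0\leq\psi(1-a_{j})<\epsilon$, hence $\psi((1-a_{j})^{2})<\epsilon$ using $(1-a_{j})^{2}\leq1-a_{j}$. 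Then the telescoping identity $1-a_{1}\cdots a_{k}=\sum_{j=1}^{k}a_{1}\cdots a_{j-1}(1-a_{j})$ together with the Cauchy--Schwarz inequality for $\psi$ (bounding each $\psi(a_{1}\cdots a_{j-1}(1-a_{j}))$ in absolute value by $\psi(a_{1}\cdots a_{j-1}a_{j-1}\cdots a_{1})^{1/2}\psi((1-a_{j})^{2})^{1/2}\leq\sqrt{\epsilon}$) yields $\mathrm{Re}\,\psi(a_{1}\cdots a_{k})>1-k\sqrt{\epsilon}$, so $\norm{a_{1}\cdots a_{k}}>1-k\sqrt{\epsilon}$; letting $\epsilon\to0$ gives $\norm{a_{1}\cdots a_{k}}=1$, which is exactly norm centredness.
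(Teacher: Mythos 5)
Your proof is correct, but it closes the equivalences along a genuinely different route from the paper's. The paper never proves \eqref{hcen}$\Rightarrow$\eqref{ncen} or \eqref{ncen}$\Rightarrow$\eqref{dcen} directly: it obtains \eqref{hcen}$\Rightarrow$\eqref{dcen} by replacing the $\mathbf{h}$-witness $a\in A^{=1}_+$ with its powers $a^n\in A^{=1}_+$ and invoking $\mathbf{d}^2\leq\mathbf{d}\circ\mathbf{h}$ \eqref{d2<hd}; it gets \eqref{dcen}$\Rightarrow$\eqref{ncen} from the purely algebraic telescoping estimate $1=\norm{b(a_1+a_1^\perp)}\leq\cdots\leq\norm{a_k\cdots a_1}+\epsilon$; and it goes \eqref{ncen}$\Rightarrow$\eqref{pfil}$\Rightarrow$\eqref{dcen} using that the closure of a norm centred set under $(a,b)\mapsto aba$ stays in the unit sphere and that a proper $\mathbf{d}$-filter must lie in $A^{=1}_+$ (via powers of any element of norm $<1$). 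You instead prove \eqref{hcen}$\Rightarrow$\eqref{ncen} by a state-plus-Cauchy--Schwarz computation (the classical Farah--Weaver-style argument), and \eqref{ncen}$\Rightarrow$\eqref{dcen} via the nested words $w$ built from \autoref{aba=a+b} together with the functional-calculus observation that $\inf_{x\in A^{=1}_+}\mathbf{d}(x,w)=0$ if and only if $\norm{w}=1$; your handling of \eqref{pfil} (all of the $\odot$-closure $D$ having norm one, $\mathbf{d}(d,0)=\norm{d}$, and powers of a word of norm $<1$) is the piece closest in spirit to the paper's \eqref{pfil}$\Rightarrow$\eqref{dcen}. The paper's route is shorter and stays entirely within the distance calculus it has already set up, avoiding states, whereas your route isolates two reusable facts: the quantitative criterion $\inf_{x\in A^{=1}_+}\mathbf{d}(x,w)\geq 1-\norm{w}$ with equality to $0$ exactly when $\norm{w}=1$, and a direct bridge from $\mathbf{h}$-centredness to norm centredness. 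The only point you leave implicit is the identification of the $\mathbf{d}$-filter generated by $C$ with $\{b:\inf_{d\in D}\mathbf{d}(d,b)=0\}$ (needed for ``$F$ proper iff $\inf_{d\in D}\norm{d}>0$''), but this is exactly how \autoref{dsemilat} constructs the generated filter (using $\mathbf{d}(d^{2^n},d)\rightarrow0$ so that $D$ lies in its own $\mathbf{d}$-closure), so this is a presentational rather than a mathematical gap.
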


\begin{proof}\
\begin{itemize}
\item[\eqref{dcen}$\Rightarrow$\eqref{hcen}]  Immediate from $\mathbf{h}\leq2\mathbf{d}$.
\item[\eqref{hcen}$\Rightarrow$\eqref{dcen}]  If $\norm{a}=1$ then $\norm{a^n}=1$ and, for any $b\in A^1_+$,
\[\mathbf{d}(a^n,b)^2\leq\mathbf{d}(a^n,a)+\mathbf{h}(a,b)\rightarrow\mathbf{h}(a,b).\]

\item[\eqref{dcen}$\Rightarrow$\eqref{ncen}]  If $C$ is $\mathbf{d}$-centred then, for any $a_1,\ldots,a_k\in C$ and $\epsilon>0$, we have $b\in A^{=1}_+$ with $\mathbf{d}(b,a_1)+\ldots+\mathbf{d}(b,a_k)<\epsilon$ so
\begin{align*}
1&=\norm{b(a_1+a_1^\perp)}\\
&\leq\norm{b(a_2+a_2^\perp)a_1}+\mathbf{d}(b,a_1)\\
&\leq\norm{b(a_3+a_3^\perp)a_2a_1}+\mathbf{d}(b,a_2)+\mathbf{d}(b,a_1)\leq\ldots\\
&\leq\norm{ba_k\ldots a_1}+\epsilon\\
&\leq\norm{a_k\ldots a_1}+\epsilon.
\end{align*}
As $\epsilon>0$ was arbitrary, $C$ is norm centred.

\item[\eqref{ncen}$\Rightarrow$\eqref{pfil}]  If the multiplicative closure of $C$ is contained in the unit sphere then the same goes for the closure $D$ of $C$ under the operation $(a,b)\mapsto aba$.  The same then applies to the $\mathbf{d}$-closure $F$ of $D$ so, in particular, $F$ is proper.  As in \autoref{dsemilat}, $F$ is the $\mathbf{d}$-filter generated by $C$.

\item[\eqref{pfil}$\Rightarrow$\eqref{dcen}]  We show that a $\mathbf{d}$-filter $F\subseteq A^1_+$ is proper if and only if it is contained in the positive unit sphere. For if $a\in F$ and $\norm{a}<1$ then $a^n\in F$ so, for any $b\in A^1_+$,
\[\mathbf{d}(a^n,b)\leq\norm{a^n}=\norm{a}^n\rightarrow0,\]
and hence $b\in F$, as $F$ is $\mathbf{d}$-closed.

If $C$ is contained in such a $\mathbf{d}$-filter $F$ then, for any $c_1,\ldots,c_n\in C$,
\[\inf_{a\in A^{=1}_+}\mathbf{d}(a,c_1)+\ldots+\mathbf{d}(a,c_k)\leq\inf_{a\in F}\mathbf{d}(a,c_1)+\ldots+\mathbf{d}(a,c_k)=0,\]
as $F$ is $\mathbf{d}$-directed, i.e., $C$ is $\mathbf{d}$-centred in $A^{=1}_+$.\qedhere
\end{itemize}
\end{proof}

In particular, the maximal $\mathbf{d}$-centred subsets are precisely the maximal proper $\mathbf{d}$-filters.  These were the original quantum filters defined by Farah and Weaver to study pure states.  Pure states correspond to minimal projections in $A^{**}$ and, more generally, $\mathbf{d}$-filters correspond to the compact projections $A^{**}$ introduced by Akemann (which was touched on briefly in \cite[Corollary 3.4]{Bice2011}).  This is the connection we explore next.

\section{Compact Projections}\label{CP}

Let $\uparrow\!p$ denote the upper set in $A^1_+$ defined by any projection $p\in A^{**}$, that is,
\[\uparrow\!p=\{a\in A^1_+:p\leq a\}.\]

\begin{dfn}
A projection $p\in A^{**}$ is \emph{compact} if $p={\displaystyle\bigwedge}\uparrow\!p$.
\end{dfn}

Note that for $p$ to be compact it is implicit that $\uparrow\!p$ is non-empty.

\begin{thm}\label{CompactFilter}
We have mutually inverse bijections
\[p\mapsto\ \uparrow\!p\qquad\text{and}\qquad F\mapsto\bigwedge F\]
between compact projections $p\in A^{**}$ and $\mathbf{d}$-filters $F\subseteq A^1_+$.  Moreover, for compact projections $p,q\in A^{**}$ and corresponding $\mathbf{d}$-filters $F,G\subseteq A^1_+$,
\begin{equation}\label{supinfd}
\mathbf{d}(p,q)=\sup_{b\in G}\inf_{a\in F}\mathbf{d}(a,b).
\end{equation}
\end{thm}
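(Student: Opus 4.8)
The plan is to establish the bijection first and then deduce the distance formula \eqref{supinfd}. For the bijection, the key point is that the maps $p\mapsto\,\uparrow\!p$ and $F\mapsto\bigwedge F$ are mutually inverse. Starting from a compact projection $p$, I would first check that $\uparrow\!p$ is a $\mathbf{d}$-filter: it is clearly $\leq$-closed and norm closed (being an intersection of the norm-closed sets $\{a:a\geq p\}$... actually $\{a\in A^1_+:\phi(a)\geq\phi(p) \text{ suitably}\}$, more carefully one uses that $\{a\in A^1_+: a\geq p\}$ is closed in norm since $p\leq a$ is a closed condition), and it is convex and $^2$-closed since $p\leq a,b$ implies $p\leq\epsilon a+(1-\epsilon)b$ and $p\leq a$ implies $p=p^2\leq a^2$; then \autoref{FilterEqvs}\eqref{convex}$\Rightarrow$\eqref{dd} gives that $\uparrow\!p$ is a $\mathbf{d}$-filter. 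That $\bigwedge\uparrow\!p=p$ is exactly the definition of compactness. Conversely, starting from a $\mathbf{d}$-filter $F$, I need to show $p:=\bigwedge F$ is a compact projection with $\uparrow\!p=F$. The containment $F\subseteq\,\uparrow\!p$ is immediate. The reverse containment $\uparrow\!p\subseteq F$ is the crux: if $a\geq p=\bigwedge F$, I must produce $a\in F$, and here I expect to invoke a compactness/approximation argument — since $F$ is downward $\mathbf{d}$-directed and $\mathbf{d}$-closed, and $\bigwedge F\leq a$, one should be able to find elements of $F$ that are $\mathbf{d}$-close to $a$. This is the standard correspondence between compact projections and their "upper sets," and I would cite or adapt the Akemann–Pedersen machinery (as the paper does elsewhere via \cite{AkemannPedersen1992}); the fact that $p\in A^{**}$ is a genuine projection (not just a positive element) follows because $F$ is $^2$-closed, so $\bigwedge F$ is a limit of idempotent-like elements — more precisely, $p^2=(\bigwedge F)^2=\bigwedge\{a^2:a\in F\}\geq\bigwedge F=p$ combined with $p\leq 1$ giving $p^2\leq p$, hence $p^2=p$.

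For the distance formula \eqref{supinfd}, the plan is to prove both inequalities. For "$\geq$": given $a\in F$ and $b\in G$, I want $\mathbf{d}(p,q)\leq\mathbf{d}(a,b)$ — wait, that is not quite it; rather I should show $\mathbf{d}(p,q)\geq\inf_{a\in F}\mathbf{d}(a,b)$ for each fixed $b\in G$, hence $\geq\sup_{b\in G}\inf_{a\in F}\mathbf{d}(a,b)$. Since $q=\bigwedge G\leq b$ for each $b\in G$, and $\mathbf{d}(\cdot,b)$ behaves well under $\leq$ in the second argument (indeed $b'\leq b$ should give... one must be careful, $\mathbf{d}$ is $\mathbf{e}$-invariant and $\mathbf{d}$ quantifies $\ll$), I would use $\mathbf{d}(p,q)=\mathbf{d}(p,\bigwedge G)$ and the fact that $\bigwedge G$ is a $\sigma$-weak limit of elements of $G$ together with $\mathbf{e}$-invariance or lower semicontinuity of $\mathbf{d}(p,\cdot)$ in the appropriate topology. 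Actually the cleanest route: for $a\in F$ we have $a\geq p$, and I expect $\mathbf{d}(a,b)\geq\mathbf{d}(p,b)$ is false in general, so instead I would argue via the defining infimum characterization of $\bigwedge F$ — namely $\mathbf{d}(p,b)\leq\inf_{a\in F}\mathbf{d}(a,b)$ would go the wrong way too. Let me reconsider: the right statement is that for fixed $b$, $\inf_{a\in F}\mathbf{d}(a,b)$ computes $\mathbf{d}(\bigwedge F, b)=\mathbf{d}(p,b)$ by a limiting argument ($F$ is $\mathbf{d}$-directed so $\mathbf{d}(\cdot,b)$ attains its infimum in the limit along $F$, which converges $\sigma$-weakly to $p$), and then $\sup_{b\in G}\mathbf{d}(p,b)=\mathbf{d}(p,\bigwedge G)=\mathbf{d}(p,q)$ similarly (sup over $b\in G$, using that $b$ ranges over a $\sigma$-weakly dense-below subset of $\uparrow\!q$ and $\mathbf{d}(p,\cdot)$ is suitably continuous). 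So the formula reduces to the two identities $\inf_{a\in F}\mathbf{d}(a,b)=\mathbf{d}(p,b)$ and $\sup_{b\in G}\mathbf{d}(p,b)=\mathbf{d}(p,q)$.

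The main obstacle I anticipate is the topological bookkeeping in $A^{**}$: justifying that $\bigwedge F$ (an infimum in the poset of self-adjoint elements of $A^{**}$) is actually the $\sigma$-weak limit of a suitable net from $F$, that it is a projection, and that $\mathbf{d}(x,\cdot)$ interacts correctly with $\sigma$-weak limits — $\mathbf{d}(x,y)=\norm{x-xy}$ is norm-continuous but only $\sigma$-weakly lower semicontinuous in $y$, so one gets inequalities that must be paired carefully to yield the exact formula. I would handle the projection claim via the $^2$-closed property as sketched above, handle the infimum-as-limit via the fact that a $\mathbf{d}$-directed norm-closed $F$ is also $\leq$-directed downward (Theorem \ref{FilterEqvs} gives $F$ is $\leq$-closed, and combined with $\mathbf{d}$-directedness one gets approximate lower bounds in $F$), so $F$ viewed as a decreasing net has $\sigma$-weak limit $\bigwedge F=p$, and then bound $\mathbf{d}(p,b)$ above by $\mathbf{d}(a,b)+\mathbf{d}(p,a)$ with $\mathbf{d}(p,a)=0$ since $p\leq a$ means $pa=p$... wait $p\ll a$? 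We have $p\leq a\leq 1$ so $pa$: since $p$ is a projection and $p\leq a$, indeed $pap=p$ hence $\norm{p-pa}^2=\norm{p-pa-ap+pap}=\norm{p-pa-ap+p}$, hmm, better: $pa=ap=p$ when $p$ projection and $p\leq a\leq1$ (standard), so $\mathbf{d}(p,a)=\norm{p-pa}=0$, giving $p\ll a$ for all $a\in\,\uparrow\!p$, which cleanly yields $\mathbf{d}(p,b)\leq\mathbf{d}(p,a)+\mathbf{d}(a,b)=\mathbf{d}(a,b)$ by the triangle inequality \eqref{tri}, hence $\mathbf{d}(p,b)\leq\inf_{a\in F}\mathbf{d}(a,b)$; the reverse $\inf_{a\in F}\mathbf{d}(a,b)\leq\mathbf{d}(p,b)$ needs the limiting argument. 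Then $\sup_{b\in G}\mathbf{d}(p,b)$: again $\mathbf{d}(p,b)\leq\mathbf{d}(p,q)+\mathbf{d}(q,b)=\mathbf{d}(p,q)$ since $q\ll b$, giving "$\leq$", and the reverse needs $b$'s from $G$ approximating $q$ $\sigma$-weakly with lower semicontinuity of $\mathbf{d}(p,\cdot)$. This last lower-semicontinuity step is where I'd expect to spend the most care.
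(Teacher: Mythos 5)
Your overall architecture is the same as the paper's (check $\uparrow\!p$ against \autoref{FilterEqvs}\eqref{convex}, build $p=\bigwedge F$ from directedness, reduce \eqref{supinfd} to $\inf_{a\in F}\mathbf{d}(a,b)=\mathbf{d}(p,b)$ and $\sup_{b\in G}\mathbf{d}(p,b)=\mathbf{d}(p,q)$), but there is a genuine gap at precisely the point you defer to ``Akemann--Pedersen machinery'' and ``the limiting argument''. The hard half of your first identity, $\inf_{a\in F}\mathbf{d}(a,b)\leq\mathbf{d}(p,b)$, is also exactly what is needed for the crux $\uparrow\!(\bigwedge F)\subseteq F$ (take $b\geq p$, so $\mathbf{d}(p,b)=0$, then use $\mathbf{d}$-closedness), so you have left the central step of the whole theorem unproved; and the tool you name for such limits, semicontinuity of the norm under $\sigma$-weak/strong convergence, points the wrong way. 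For a decreasing net $a_\lambda\downarrow p$, lower semicontinuity only gives $\norm{pb^\perp}\leq\liminf_\lambda\norm{a_\lambda b^\perp}$, which is the trivial direction you already get from the triangle inequality and $p\ll a$. What is needed is an upper bound $\inf_\lambda\norm{b^\perp a_\lambda b^\perp}\leq\norm{b^\perp pb^\perp}$, i.e.\ an exchange of $\inf_\lambda$ with the supremum over $\mathsf{Q}$ defining the norm; the paper obtains this from Dini's theorem (monotone pointwise convergence of $\phi\mapsto\phi(b^\perp a b^\perp)$ along a $\leq$-directed family is uniform on the weak* compact $\mathsf{Q}$). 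Without some such monotonicity-plus-compactness argument, neither the injectivity of $F\mapsto\bigwedge F$ nor \eqref{supinfd} is established. (Your second identity is fine: for $\sup_{b\in G}\mathbf{d}(p,b)\geq\mathbf{d}(p,q)$, lower semicontinuity along $b_\lambda\downarrow q$ is indeed the right direction, and it even sidesteps the $b^\perp$ versus $b^{\perp2}$ bookkeeping the paper handles via $\{b^\perp:b\in G\}=\{b^{\perp2}:b\in G\}$.)

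Two further points need repair. First, $\bigwedge F$ does not obviously exist: bounded subsets of $A^{**}_{\mathrm{sa}}$ need not have infima, and $F$ itself is not a decreasing net, so ``$F$ viewed as a decreasing net has $\sigma$-weak limit $\bigwedge F$'' is not literally available. The paper passes to the dense $\leq$-filter $F'\subseteq F$ given by \autoref{FilterEqvs}\eqref{<=fil}; its downward directedness makes the pointwise infimum on $\mathsf{Q}$ affine, hence an element of $A^{**}$, which is both $\bigwedge F$ and the $\sigma$-strong limit legitimizing your net arguments. Second, two inline steps rely on operator monotonicity of squaring, which fails: $p\leq a$ does not give $p^2\leq a^2$ for general positive elements, and $(\bigwedge F)^2=\bigwedge\{a^2:a\in F\}$ is unjustified as written. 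Both are repairable using the fact you cite later, namely $pa=p$ when $p$ is a projection and $p\leq a\leq1$: then $pa^2=p$ gives $^2$-closedness of $\uparrow\!p$, and for the projection claim it suffices that $a_\lambda^2\in F$ (so $a_\lambda^2\geq\bigwedge F$) and $a_\lambda^2\rightarrow p^2$ $\sigma$-strongly, giving $p^2\geq p$. Note the paper argues differently here, using $\mathbf{d}$-initiality of $F$ and $\mathbf{d}^2\leq\mathbf{h}\circ\mathbf{d}$ to get $p\ll a$ for all $a\in F$ and hence $\sqrt{p}\leq p$, which also delivers the $p\ll a$ you need elsewhere.
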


\begin{proof}
Take a projection $p\in A^{**}$ and consider $\uparrow\!p$.  If $p=pa$ then $pa^2=pa=p$, i.e., $p\ll a$ implies $p\ll a^2$ so $\uparrow\!p$ is $^2$-closed.  Likewise, if $p=pa$, $p=pb$ and $\epsilon\in(0,1)$ then $p(\epsilon a+(1-\epsilon)b)=\epsilon p+(1-\epsilon)p=p$, i.e., $p\ll(\epsilon a+(1-\epsilon)b)$ so $\uparrow\!p$ is convex.  Also $p\ll a\leq b\in A^1_+$ implies $p\ll b$, as $\mathbf{d}^2\leq\mathbf{d}\circ\mathbf{h}$ on $A^1_+$, so $\uparrow\!p$ is $\leq$-closed.  Finally, if $a_n\rightarrow a$ and $p\ll a_n$, for all $n$, then $\mathbf{d}(p,a)=\lim\mathbf{d}(p,a_n)=0$, i.e., $p\ll a$ so $\uparrow\!p$ is norm closed and thus a $\mathbf{d}$-filter, by \autoref{FilterEqvs}.

Conversely, take a $\mathbf{d}$-filter $F\subseteq A^1_+$ which, by \eqref{FilterEqvs}, contains a dense $\leq$-filter $F'$.  The pointwise infimum of $F'$ on $\mathsf{Q}$ (recall that $\mathsf{Q}$ is the space of quasistates on $A$) is an affine function and thus defines an element $p\in A^{**}$.  As $\leq$ on $A^{**}_\mathrm{sa}$ is determined by $\mathsf{Q}$, $p=\bigwedge F'=\bigwedge F=\bigwedge\uparrow\!p$.  As $p$ takes $\mathsf{Q}$ to $[0,1]$, $p$ is positive and has norm at most $1$, i.e., $p\in A^{**1}_+$.  As $F$ is $\mathbf{d}$-initial,
\[\sup_{a\in F}\mathbf{d}(p,a)^2\leq\sup_{a\in F}\inf_{b\in F}(\mathbf{h}(p,b)+\mathbf{d}(b,a))=0,\]
i.e., for all $a\in F$, $p\ll a$ so $\sqrt{p}\ll a$.  Thus $\sqrt{p}\leq\bigwedge F=p$ so $p$ is a projection.

Now take another $\mathbf{d}$-filter $G$ containing a dense $\leq$-filter $G'$ and defining a compact projection $q=\bigwedge G$ which is a pointwise infimum of $G$ on $\mathsf{Q}$.  Then $pG'p$ is also $\leq$-directed so $pqp=\bigwedge pG'p=\bigwedge pGp$ is also a pointwise infimum on $\mathsf{Q}$ and hence
\begin{align}
\nonumber\mathbf{d}(p,q)^2&=\norm{pq^\perp p}=\sup_{\phi\in\mathsf{Q}}\phi(pq^\perp p)\\
\nonumber&=\phi(q)-\inf_{\phi\in\mathsf{Q}}\phi(pqp)\\
\nonumber&=\phi(q)-\inf_{\phi\in\mathsf{Q},b\in G}\phi(pbp)\\
\nonumber&=\sup_{\phi\in\mathsf{Q},b\in G}\phi(pb^\perp p)\\
\label{bperp2}&=\sup_{\phi\in\mathsf{Q},b\in G}\phi(pb^{\perp2}p)\\
\nonumber&=\sup_{b\in G}\norm{pb^{\perp2}p}=\sup_{b\in G}\mathbf{d}(p,b)^2.
\end{align}
For \eqref{bperp2}, note that $\norm{ab^\perp}^2=\norm{ab^{\perp2}a}\leq\norm{ab^{\perp2}}\leq\norm{ab^\perp}$ so $\mathbf{d}b\approx\mathbf{d}b^{\perp2\perp}$.  Thus, as $G$ is $\mathbf{d}$-initial and $\mathbf{d}$-closed, $b\in G$ if and only if $b^{\perp2\perp}\in G$, i.e., $\{b^\perp:b\in G\}=\{b^{\perp2}:b\in G\}$.

Fix $b\in G$ and define weak* continuous $f_a:\mathsf{Q}\rightarrow[0,1]$ by
\[f_a(\phi)=(\phi(b^\perp ab^\perp)-\norm{b^\perp pb^\perp})_+.\]
Then $(f_a)_{a\in F'}$ is downwards directed in the product ordering on $[0,1]^\mathsf{Q}$ and converges to $0$ pointwise.  As $\mathsf{Q}$ is weak* compact, Dini's theorem says $(f_a)_{a\in F'}$ must actually converge uniformly to $0$ on $\mathsf{Q}$ and hence
\[\inf_{a\in F'}\norm{b^\perp ab^\perp}\leq\norm{b^\perp pb^\perp}\leq\inf_{a\in F}\norm{b^\perp ab^\perp}.\]
As $F$ is $^2$-closed and $\sqrt{\ }$-closed (as $F$ is $\leq$-closed),
\[\inf_{a\in F}\mathbf{d}(a,b)^2=\inf_{a\in F}\norm{b^\perp a^2b^\perp}=\inf_{a\in F}\norm{b^\perp ab^\perp}=\inf_{a\in F'}\norm{b^\perp ab^\perp}=\norm{b^\perp pb^\perp}=\mathbf{d}(p,b)^2.\]
Thus, together with the above we have
\begin{equation}\label{supinfFG}
\mathbf{d}(p,q)=\sup_{b\in G}\mathbf{d}(p,b)=\sup_{b\in G}\inf_{a\in F}\mathbf{d}(a,b).
\end{equation}

Now note that
\begin{equation}\label{GsubF}
\sup_{b\in G}\inf_{a\in F}\mathbf{d}(a,b)=0\qquad\Leftrightarrow\qquad G\subseteq F.
\end{equation}
Indeed, the $\mathbf{d}$-initiality of $F$ yields $\Leftarrow$, while the fact $F$ is $\mathbf{d}$-closed yields $\Rightarrow$.  Combined with \eqref{supinfFG}, this shows that $p=q$ implies $F=G$, i.e., the map $F\mapsto\bigwedge F$ is injective on $\mathbf{d}$-filters.  Thus the given maps are bijections, as required.
\end{proof}

In the above proof, we used dense $\leq$-filter subsets of $\mathbf{d}$-filters in a couple of places, but this was not absolutely necessary.  Indeed, one could verify directly that pointwise infimums on $\mathsf{Q}$ of $\mathbf{h}$-directed subsets are affine and hence define elements of $A^{**}$.  Likewise, Dini's theorem can be generalized to $\mathbf{h}$-directed subsets and even $\mathbf{h}$-Cauchy nets \textendash\, see \cite[Theorem 1]{Bice2016c}.

The gist of \autoref{CompactFilter} is that compact projections in $A^{**}$ can be more concretely represented by $\mathbf{d}$-filters in $A$, and this extends to various relations or functions one might consider.  For example, from \eqref{supinfd} and \eqref{GsubF} we immediately see that, for compact projections $p,q\in A^{**}$ and corresponding $\mathbf{d}$-filters $F,G\subseteq A^1_+$,
\[p\leq q\qquad\Leftrightarrow\qquad F\supseteq G.\]
Likewise, as $\norm{p-q}=\max\{\norm{pq^\perp},\norm{qp^\perp}\}$, \eqref{supinfd} yields
\[\norm{p-q}=\max(\sup_{b\in G}\inf_{a\in F}\mathbf{d}(a,b),\sup_{a\in F}\inf_{b\in G}\mathbf{d}(b,a)),\]
i.e., the metric on compact projections corresponds to the Hausdorff metric on $\mathbf{d}$-filters.  We can also show that the natural quantification of orthogonality on compact projections is determined by the corresponding $\mathbf{d}$-filters.

\begin{thm}
For compact $p,q\in A^{**}$ and $\mathbf{d}$-filters $F=\ \uparrow\!p$ and $G=\ \uparrow\!q$,
\[\norm{pq}=\inf_{a\in F,b\in G}\norm{ab}.\]
\end{thm}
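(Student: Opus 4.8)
The inequality $\inf_{a\in F,b\in G}\norm{ab}\leq\norm{pq}$ is the easy direction: since $p\leq a$ and $q\leq b$ for all $a\in F$, $b\in G$, and multiplication by positive elements below $1$ is norm-contractive in the relevant sense, one expects $\norm{pq}\leq\norm{ab}$ for every such pair, giving even $\norm{pq}\leq\inf\norm{ab}$. Concretely I would write $pq = p(aq) = p(a q b) \cdots$ — more carefully, use $p = pa$ (as $p\ll a$, since $p$ is compact and $a\in\ \uparrow\!p$ forces $pa=p$ via $p\leq a\leq 1$) and likewise $q = bq$, so $pq = pa\cdot bq$ and hence $\norm{pq}\leq\norm{a}\norm{ab}\norm{q}\leq\norm{ab}$. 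Wait — I need $p=pa$, which holds because $p\leq a$ with $p$ a projection and $a\leq 1$ implies $p(1-a)p=0$ hence $p(1-a)=0$; so indeed $p\ll a$ for all $a\in\ \uparrow\!p$. This settles $\leq$.

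For the reverse inequality $\inf_{a\in F,b\in G}\norm{ab}\leq\norm{pq}$, the plan is to approximate. Fix $\epsilon>0$. Using \autoref{CompactFilter}, $p=\bigwedge F$ and $q=\bigwedge G$ are pointwise infima on the quasistate space $\mathsf{Q}$ of dense $\leq$-filters $F'\subseteq F$, $G'\subseteq G$; by Dini-type arguments (as used repeatedly in the proof of \autoref{CompactFilter}, via weak* compactness of $\mathsf{Q}$ and downward-directedness), one can push the infimum of $\norm{ab}$ over $a\in F'$, $b\in G'$ down to something controlled by $p$ and $q$. The cleanest route: observe $\norm{ab}^2=\norm{bab}$ and that $\phi\mapsto\phi(bab)$ for $\phi\in\mathsf{Q}$, $b\in G'$ fixed, decreases to $\phi(bpb)$ as $a$ ranges over $F'$; then a second directedness in $b$ lets $\phi(bpb)$ approach $\phi(qpq)$, and $\sup_{\phi}\phi(qpq)=\norm{qpq}=\norm{pq}^2$. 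The subtlety is that we have two separate directed families, so I would run the Dini argument once in $a$ (for each fixed $b$) to get $\inf_{a\in F}\norm{bab} = \norm{bpb}$, exactly as in the displayed computation $\inf_{a\in F}\norm{b^\perp ab^\perp}=\norm{b^\perp pb^\perp}$ in \autoref{CompactFilter}, and then a second Dini argument in $b$ to get $\inf_{b\in G}\norm{bpb}=\norm{qpq}$, using that $bpb\in A$ and $p$ is fixed. Since $F$ is $^2$-closed and $\sqrt{\ }$-closed, $\inf_{a\in F}\norm{ab}^2 = \inf_{a\in F}\norm{a^2 b\cdot \text{(something)}}$ — here I'd be careful and instead just note $\norm{ab}^2=\norm{bab}$ directly with $a\in F$, avoiding squares entirely.

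The main obstacle I anticipate is the double limit: justifying that $\inf_{a\in F,\,b\in G}\norm{bab}=\inf_{b\in G}\inf_{a\in F}\norm{bab}=\inf_{b\in G}\norm{bpb}=\norm{qpq}$ cleanly, i.e. that iterating the two Dini arguments is legitimate. The first equality is just that infimum over a product is the iterated infimum. The first Dini argument (in $a$, $b$ fixed) works verbatim as in \autoref{CompactFilter}: define $f_a(\phi)=(\phi(bab)-\norm{bpb})_+$, note $(f_a)_{a\in F'}$ is downward directed and converges pointwise to $0$ on the weak* compact $\mathsf{Q}$, so it converges uniformly, giving $\inf_{a\in F'}\norm{bab}\leq\norm{bpb}$; combined with $p\leq a$ giving $\norm{bpb}\leq\norm{bab}$, we get equality, and density of $F'$ in $F$ passes it to $F$. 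The second Dini argument in $b$ is identical with the roles of the fixed element now being $p$ rather than $a$, using $\phi(bpb)\downarrow\phi(qpq)$ over $b\in G'$. Finally $\norm{qpq}=\norm{(p^{1/2}q)^*(p^{1/2}q)}=\norm{p^{1/2}q}^2$... but actually I want $\norm{pq}$, so I use $\norm{qpq}=\norm{qp\cdot pq}$? No: $\norm{qpq}=\norm{(pq)^*(pq)}=\norm{pq}^2$ since $p=p^2$. That closes the chain: $\inf\norm{ab}=\sqrt{\norm{qpq}}=\norm{pq}$, as required.
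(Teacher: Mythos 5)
Your first inequality is fine: despite the mislabelling at the start, the argument you give (using $p=pa$ and $q=bq$, so $pq=p(ab)q$) proves $\norm{pq}\leq\norm{ab}$ for all $a\in F$, $b\in G$, which is essentially the paper's easy direction. The gap is in the reverse inequality, specifically in your ``second Dini argument in $b$''. The first Dini argument works for the same reason it works in \autoref{CompactFilter}: the varying element $bab$ lies in $A$ (so $\phi\mapsto\phi(bab)$ is weak* continuous on $\mathsf{Q}$) and $a\mapsto bab$ is order preserving because you conjugate by a \emph{fixed} $b$, so $(bab)_{a\in F'}$ really is downward directed. Neither property survives the second step. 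The map $b\mapsto bpb$ is not operator monotone: already in $M_2$ one can have a positive contraction $b$ and a projection $p$ with $bpb\nleq p$, so $(bpb)_{b\in G'}$ need not be downward directed and your claim $\phi(bpb)\downarrow\phi(qpq)$ is unjustified. If you symmetrize instead, writing $\norm{bpb}=\norm{pb^2p}$ and using that $G$ is $^2$- and $\sqrt{\ }$-closed so that $(pbp)_{b\in G'}$ genuinely decreases to $pqp$, you lose continuity: $\phi\mapsto\phi(pbp)$ is not weak* continuous on $\mathsf{Q}$, since $p\in A^{**}\setminus A$ (in the paper's Dini arguments $p$ only ever enters through the \emph{constant} $\norm{b^\perp pb^\perp}$). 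So Dini's theorem applies in neither formulation, and the statement you need, $\inf_{b\in G}\sup_{\phi\in\mathsf{Q}}\phi(pbp)\leq\sup_{\phi\in\mathsf{Q}}\phi(pqp)$, is an inf--sup interchange that does not follow from pointwise convergence alone. (Minor: $\norm{ab}^2=\norm{ba^2b}$, not $\norm{bab}$; this is harmless since $F$ is $\sqrt{\ }$-closed, but your remark about ``avoiding squares entirely'' is incorrect.)

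The paper sidesteps the double limit altogether, which is exactly the obstacle you flagged. Setting $r=\inf_{a\in F,b\in G}\norm{ab}$, it invokes \cite[Theorem 2.2]{Bice2011} to get, for each $a$ in a dense $\leq$-filter $F'\subseteq F$, a quasistate in $\mathsf{Q}_a=\{\phi\in\mathsf{Q}:\phi[G]=\{1\}\text{ and }\phi(a)\geq r^2\}$; these sets are weak* compact and directed, so some $\phi$ lies in all of them, and then $\phi(q)=1$ forces $\norm{pq}^2=\norm{qpq}\geq\phi(qpq)=\phi(p)\geq r^2$. The point is that a single state taking the value $1$ on all of $G$ eliminates the limit in $b$ entirely; your plan needs either this kind of external input or some other device to justify the interchange, and as written it does not close.
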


\begin{proof}
Let $r=\inf_{a\in F,b\in G}\norm{ab}$.  As $p\ll F$ and $q\ll G$, we immediately have
\[\norm{pq}^2=\norm{qpq}\leq\inf_{a\in F}\norm{qa^2q}=\inf_{a\in F}\norm{aqa}\leq\inf_{a\in F,b\in G}\norm{ab^2a}=r^2.\]
Conversely, take a dense $\leq$-filter $F'\subseteq F$ and, for any $a\in F'$, consider
\[\mathsf{Q}_a=\{\phi\in\mathsf{Q}:\phi[G]=\{1\}\text{ and }\phi(a)\geq r^2\}.\]
By \cite[Theorem 2.2]{Bice2011}, each $\mathsf{Q}_a$ is non-empty.  So $\bigcap_{a\in F'}\mathsf{Q}_a$ is a directed intersection of non-empty weak* compact subsets and we thus have some $\phi\in\bigcap_{a\in F}\mathsf{Q}_a$.  As $\phi[G]=\{1\}$, $\phi(q)=1$ and hence $\norm{pq}^2=\norm{qpq}\geq\phi(qpq)=\phi(p)\geq r^2$.
\end{proof}

A natural question to ask is if the infimum above is actually a minimum.

\begin{qst}[\cite{MO2015}]
Do we always have $a\in F$ and $b\in G$ with $\norm{pq}=\norm{ab}$?
\end{qst}

When $pq=0$ the answer is yes, by Akemann's non-commutative Urysohn lemma \textendash\, see \cite[Lemma III.1]{Akemann1971}.  However, we feel that a truly non-commutative Urysohn lemma should apply to compact projections that do not commute.

Dual to compact projections, we have open projections.  Specifically, let $\downarrow\!\!p$ denote the lower set in $A^1_+$ defined by any projection $p\in A^{**}$, i.e.
\[\downarrow\!p=\{a\in A^1_+:a\leq p\}.\]

\begin{dfn}
A projection $p\in A^{**}$ is \emph{open} if $p=\bigvee\downarrow\!p$ and \emph{closed} if $p^\perp$ is \nolinebreak open.
\end{dfn}

For $\mathbf{D},\mathbf{E}\colon X\times X\to[0,\infty]$, define
\[\widetilde{\mathbf{D}}\circ\mathbf{E}=\sup_{r\in\mathbb{R}}((r\mathbf{D})\circ\mathbf{E}),\]
So $(\widetilde{\mathbf{D}}\circ\mathbf{E})(x,y)=\inf\{r:\forall\epsilon>0\ \exists z\in X\ (x\mathbf{D}z<\epsilon\text{ and }z\mathbf{E}y<r)\}$.  In particular,
\begin{gather*}
\mathbf{D}\circ\mathbf{E}\ \leq\ \widetilde{\mathbf{D}}\circ\mathbf{E}\ \leq\ |\mathbf{D}|\circ\mathbf{E}.\\
\mathbf{C}\precapprox\mathbf{D}\quad\Rightarrow\quad\widetilde{\mathbf{C}}\circ\mathbf{E}\leq\widetilde{\mathbf{D}}\circ\mathbf{E}.
\end{gather*}

The following proof was inspired by interpolation arguments introduced by Brown \textendash\, see \cite{Brown1988} \textendash\, and adapted by Akemann and Pedersen \textendash\, see \cite{AkemannPedersen1992} (although the distance-like functions they used were never formalized as such).

\begin{thm}\label{compactequivalents}
Assume $p\in A^{**}$ is a closed projection.  On $A^1_+$, we have
\begin{equation}\label{plle}
p(\ll\circ\ \mathbf{e})\precapprox p\mathbf{d}
\end{equation}
\end{thm}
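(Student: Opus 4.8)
The plan is to establish the (uniformly) stronger inequality $p(\ll\circ\,\mathbf{e})\le 2\,p\mathbf{d}$ on $A^1_+$, from which \eqref{plle} is immediate. Unravelling the left side, $p(\ll\circ\,\mathbf{e})(a)=\inf\{\norm{z-a}:z\in A^1_+,\ p\ll z\}$; since $p\le z\le 1$ forces $pz=p$, this is just the norm distance from $a$ to $\uparrow\!p$. (The opposite inequality $p\mathbf{d}\le p(\ll\circ\,\mathbf{e})$ is trivial, from $\norm{p-pa}=\norm{p(z-a)}\le\norm{z-a}$ whenever $pz=p$, so the theorem in fact asserts uniform equivalence of the two functions.) Thus, given $a\in A^1_+$ with $\norm{pa^\perp}$ small, I must produce $z\in A^1_+$ with $p\le z$ and $\norm{z-a}\le 2\norm{pa^\perp}$. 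I may assume there is $a_0\in A^1_+$ with $p\le a_0$ — automatic when $A$ is unital, and otherwise $p(\ll\circ\,\mathbf{e})\equiv\infty$ while $p\mathbf{d}$ is bounded away from $0$, so \eqref{plle} holds vacuously. Using that $p^\perp$ is open, fix the hereditary subalgebra $B=\overline{p^\perp A^{**}p^\perp}\cap A$ with an approximate unit $(e_\lambda)$, so $e_\lambda\nearrow p^\perp$, and set $h_\lambda:=a_0^{1/2}(1-e_\lambda)a_0^{1/2}=a_0-a_0^{1/2}e_\lambda a_0^{1/2}$. Since $1-e_\lambda\ge p$ and $a_0^{1/2}pa_0^{1/2}=p$ (as $p\le a_0$), each $h_\lambda$ lies in $\uparrow\!p\cap A^1_+$; the net is decreasing, lives inside $A$, and converges $\sigma$-weakly to $p$.

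The heart of the proof — and the step I expect to be the main obstacle — is a Dini-type passage from weak to norm convergence. With $a^\perp=1-a\in\widetilde A$, the elements $a^\perp h_\lambda a^\perp$ lie in $A$ (as $A\trianglelefteq\widetilde A$), form a decreasing net (conjugation by the fixed $a^\perp$ is order preserving), and converge $\sigma$-weakly to $a^\perp p a^\perp=(pa^\perp)^*(pa^\perp)$, of norm $\norm{pa^\perp}^2$. Each $\phi\mapsto\phi(a^\perp h_\lambda a^\perp)$ is weak* continuous on the compact quasistate space $\mathsf{Q}$, this net of functions decreases, and pointwise it converges (via the normal extension of $\phi$ to $A^{**}$) to a value $\le\norm{pa^\perp}^2$; the easy half of Dini's theorem — for each $\phi$ pick $\lambda_\phi$ with $\phi(a^\perp h_{\lambda_\phi}a^\perp)<\norm{pa^\perp}^2+\varepsilon$, spread this over a neighbourhood by continuity, cover $\mathsf{Q}$ finitely, and pass beyond all the chosen indices — then yields $\inf_\lambda\norm{a^\perp h_\lambda a^\perp}\le\norm{pa^\perp}^2$, i.e.\ $\inf_\lambda\norm{a^\perp h_\lambda^{1/2}}\le\norm{pa^\perp}$. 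The subtlety to stress is that the pointwise limit $a^\perp p a^\perp$ is \emph{not} in $A$ (because $p$ is not), so the full Dini theorem on uniform convergence is unavailable; only this one-sided monotone version is needed, and it is precisely the closedness of $p$ that makes $B$, hence each $h_\lambda$ and each $a^\perp h_\lambda a^\perp$, sit inside $A$ so that it applies. I then fix $\varepsilon>0$, choose $\lambda_0$ with $\norm{a^\perp h_{\lambda_0}^{1/2}}<\norm{pa^\perp}+\varepsilon$, and put $h=h_{\lambda_0}\in\uparrow\!p\cap A^1_+$.

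The last step is to exhibit the interpolant $z:=(1-h)^{1/2}a(1-h)^{1/2}+h$, a non-commutative analogue of the pointwise convex combination $(1-h)a+h$. It is routine that $z\in A^1_+$ (it is positive, lies between $h\ge 0$ and $(1-h)+h=1$, and belongs to $A$ since $(1-h)^{1/2}a(1-h)^{1/2}\in\widetilde A\,A\,\widetilde A\subseteq A$) and that $p\ll z$ (from $1-h\le p^\perp$ we get $p(1-h)^{1/2}=0$, so $pz=ph=p$); thus $z$ competes in $p(\ll\circ\,\mathbf{e})(a)$. For the estimate, set $k:=1-(1-h)^{1/2}\in A_+$ (so $h=2k-k^2$); a direct expansion gives $z-a=ka^\perp+a^\perp k-ka^\perp k=ka^\perp(1-k)+a^\perp k$, whence $\norm{z-a}\le 2\norm{a^\perp k}$. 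Since $k=\psi(h)$ with $\psi(t)=1-\sqrt{1-t}$, and $\psi(t)\le\sqrt t$ on $[0,1]$ (because $\sqrt{1-t}+\sqrt t\ge 1$), I may write $\psi(t)=\sqrt t\,\sigma(t)$ with $\sigma(t)\in[0,1]$, so $k=h^{1/2}\sigma(h)$ and $\norm{a^\perp k}\le\norm{a^\perp h^{1/2}}\,\norm{\sigma(h)}\le\norm{a^\perp h^{1/2}}<\norm{pa^\perp}+\varepsilon$. Hence $p(\ll\circ\,\mathbf{e})(a)\le\norm{z-a}<2(\norm{pa^\perp}+\varepsilon)$, and letting $\varepsilon\to 0$ gives $p(\ll\circ\,\mathbf{e})\le 2\,p\mathbf{d}$, and in particular \eqref{plle}. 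This is the Brown / Akemann–Pedersen interpolation idea, recast directly in terms of $\mathbf{d}$ and $\mathbf{e}$.
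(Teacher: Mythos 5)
Your interpolation construction is essentially sound and, where it applies, even sharper than the paper's conclusion: granted some $a_0\in A^1_+$ with $p\le a_0$, the elements $h_\lambda=a_0^{1/2}(1-e_\lambda)a_0^{1/2}$, the one-sided Dini argument on $\mathsf{Q}$, and the interpolant $z=(1-h)^{1/2}a(1-h)^{1/2}+h$ correctly give $p(\ll\circ\,\mathbf{e})(a)\le 2\,\mathbf{d}(p,a)$ for all $a\in A^1_+$, a Lipschitz bound rather than mere uniform domination, and this is recognisably the same Brown/Akemann--Pedersen circle of ideas the paper uses. The genuine gap is the reduction to the case $\uparrow\!p\neq\emptyset$. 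You assert that if there is no $a_0\in A^1_+$ with $p\le a_0$ then $p\mathbf{d}$ is bounded away from $0$, so that \eqref{plle} holds vacuously. That assertion is precisely (the contrapositive of) \eqref{inf=>ll}: a closed projection with $\inf_{a\in A^1_+}\mathbf{d}(p,a)=0$ admits $a\in A^1_+$ with $p\ll a$. In the paper this is a \emph{corollary} of \eqref{plle}, not an input to it, and it is non-trivial: \autoref{boundedopen} exhibits an open projection with $\inf_{a\in A^1_+}\mathbf{d}(p,a)=0$ but no $a\in A^1_+$ with $p\ll a$, so closedness must do real work here \textendash\, yet your argument invokes closedness only after an element of $\uparrow\!p$ is already in hand. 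As written, the case split is circular, or at best silently relies on an uncited external result such as \cite[Lemma 2.4]{AkemannAndersonPedersen1989} or a variant of \cite[Theorem 1.2]{Akemann1970b}.

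By contrast, the paper's proof never assumes $\uparrow\!p\neq\emptyset$: it assumes only $\inf_{a\in A^1_+}\mathbf{d}(p,a)=0$, proves the weakened statement \eqref{weakplle} in which the interpolant $b$ merely satisfies $\mathbf{d}(p,b)<\gamma$ (using $\mathbf{n}\precapprox\mathbf{h}\precapprox\mathbf{d}$, a Dini argument on $\mathsf{Q}$, and \autoref{h=p}), and then manufactures an element of $\uparrow\!p$ as the norm limit of a recursively chosen Cauchy sequence. To repair your proof you must either prove the dichotomy you assert (essentially redoing that iteration, or quoting Akemann's domination result explicitly) or run your construction with $a_0$ replaced by elements only approximately dominating $p$ and iterate as the paper does; with such a patch it would stand as a clean alternative giving the explicit constant $2$ whenever $\uparrow\!p\neq\emptyset$.
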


\begin{proof}\
If $\inf_{a\in A^1_+}\mathbf{d}(p,a)>0$ then \eqref{plle} holds vacuously on $A^1_+$.  So assume $\inf_{a\in A^1_+}\mathbf{d}(p,a)=0$.  We first claim a weakened form of \eqref{plle} on $A^1_+$, namely
\begin{equation*}
p(\widetilde{\mathbf{d}}\circ\mathbf{e})\precapprox p\mathbf{d}.
\end{equation*}
In more classical terms, we are claiming that
\begin{gather}\label{weakplle}
\forall\epsilon>0\ \exists\delta>0\ \forall a\in A^1_+\\
\nonumber\mathbf{d}(p,a)<\delta\quad\Rightarrow\quad\forall\gamma>0\ \exists b\in A^1_+\ (\mathbf{d}(p,b)<\gamma\text{ and }\norm{a-b}<\epsilon).
\end{gather}
To see this, take $\epsilon>0$.  By \autoref{h=p}, we can take $\delta>0$ such that
\begin{equation}\label{h<4delta}
\mathbf{h}(a,b)<4\delta\quad\Rightarrow\quad\mathbf{p}(a,b)<\tfrac{1}{2}\epsilon,
\end{equation}
for all $a,b\in A^1_+$.  Now take $a\in A^1_+$ with $\mathbf{d}(p,a)<\delta$.  As $\inf_{a\in A^1_+}\mathbf{d}(p,a)=0$ and $A^1_+$ is $\mathbf{d}$-directed, for any $\gamma>0$, we have $u\in A^1_+$ with $\mathbf{d}(a,u),\mathbf{d}(p,u)<\gamma^2$.  By \autoref{h=n}, $\mathbf{n}\precapprox\mathbf{h}\leq2\mathbf{d}$ so we may also assume $a\leq u$ (alternatively, use \cite[Proposition 1]{Brown2015}) and hence $u-a\in A^1_+$.  We may further assume $\gamma^2<\delta$ so
\begin{align*}
\mathbf{h}(u-a,p^\perp)&\leq2\mathbf{d}(u-a,p^\perp)\\
&=2\norm{(u-a)p}\\
&\leq2(\norm{(u-ua)p}+\norm{(ua-a)p})\\
&\leq2(\norm{p-ap}+\norm{ua-a})\\
&=2(\mathbf{d}(p,a)+\mathbf{d}(a,u))\\
&<4\delta.
\end{align*}
As $u-a-p^\perp$ is the pointwise infimum on $\mathsf{Q}$ of $(u-a-c)_{p^\perp\geq c\in A^1_+}$, Dini's theorem again yields $c\in A^1_+$ with $c\leq p^\perp$ and $\mathbf{h}(u-a,c)<4\delta$.  By \eqref{h<4delta}, $\mathbf{p}(u-a,c)<\frac{1}{2}\epsilon$, i.e., we have $d\in A^1_+$ with $\norm{u-a-d}<\frac{1}{2}\epsilon$ and $d\leq c$.  Setting $b=(u-d)_+$,
\[\norm{u-d-b}=\norm{(d-u)_+}\leq\norm{(d+a-u)_+}\leq\norm{d+a-u}<\tfrac{1}{2}\epsilon\]
so $\norm{a-b}\leq\norm{a+d-u}+\norm{u-d-b}<\epsilon$.  As $pd=0$ and $u-d\leq b$,
\[\mathbf{d}(p,b)^2\leq\mathbf{d}(p,u-d)+\mathbf{h}(u-d,b)=\mathbf{d}(p,u)<\gamma^2,\]
thus proving \eqref{weakplle}.

Now \eqref{plle} is saying the same thing as \eqref{weakplle}, just with $\mathbf{d}(p,b)<\gamma$ strengthened to $p\ll b$.  To prove this, we iterate \eqref{weakplle}.  First take $\delta_n>0$ satisfying \eqref{weakplle} with $\epsilon$ replaced by $\epsilon/2^n$, for any fixed $\epsilon>0$.  So for any $a_1\in A^1_+$ with $\mathbf{d}(p,a_1)<\delta_1$, we can recursively take $a_{n+1}\in A^1_+$ with $\mathbf{d}(p,a_{n+1})<\delta_{n+1}$ and $\norm{a_n-a_{n+1}}<\epsilon/2^n$.  Thus $(a_n)$ has a limit $b\in A^1_+$ with $\mathbf{d}(p,b)\leq\mathbf{d}(p,a_n)+\mathbf{e}(a_n,b)\rightarrow0$, i.e., $p\ll b$.  Also $\norm{a_1-b}<\sum\epsilon/2^n=\epsilon$, thus proving \eqref{plle}.
\end{proof}

As one might expect, in the commutative case an easier proof of a stronger result is available.  Specifically, if $a\in A^1_+$ and $p$ is a projection with $ap=pa$ and $\mathbf{d}(p,a)=r<1$ then $p\ll f(a)\in A^1_+$, for any continuous function $f$ on $[0,1]$ taking $0$ to $0$ and $[1-r,1]$ to $1$.  Thus $\inf_{p\ll b\in A^1_+}\norm{a-b}=r$, i.e.
\[(\ll\circ\ \mathbf{e})(p,a)=\mathbf{d}(p,a).\]

In particular, for any projection $p$, $\inf_{a\in A^1_+,ap=pa}\mathbf{d}(p,a)$ must be $0$ or $1$.  However, this is not true for $\inf_{a\in A^1_+}\mathbf{d}(p,a)$.  For example, let $A$ be the C*-subalgebra of $C([0,1],M_2)$ with $f(0)\in\mathbb{C}Q$ for some fixed rank one $Q\in M_2$.  Take any other rank one $P\in M_2$ and define $p$ on $[0,1]$ by $p(0)=Q$ and $p(x)=P$ otherwise.  This represents a closed projection in $A^{**}$ (as the atomic representation is faithful on closed projections \textendash\, see \cite[Theorem 4.3.15]{Pedersen1979}) with $\inf_{a\in A^1_+}\mathbf{d}(p,a)=\norm{P-Q}$, which can be anywhere between $0$ and $1$.

Now we can show that `compact' is the same as `closed and bounded'.  Indeed, this is usually taken as the definition, i.e., compact projections are usually defined as closed projections satisfying some notion of boundedness, like $p\leq a\in A_+$ (see \cite[Definition II.1]{Akemann1971}) or $p\ll a\in A^1_+$ (see \cite[\S3.5]{OrtegaRordamThiel2012}).  We also mention some other boundedness notions below in \autoref{BoundedEqvs}.  However, we feel this obscures the duality between compact and open projections and it is more natural to define them independently via $\uparrow\!p$ and $\downarrow\!p$ respectively, as done here.

\begin{cor}
A projection $p\in A^{**}$ is compact if and only if $p$ is closed and
\[\inf_{a\in A^1_+}\mathbf{d}(p,a)=0.\]
\end{cor}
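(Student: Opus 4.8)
The plan is to establish the two implications separately, the direction \emph{compact} $\Rightarrow$ \emph{closed and} $\inf_{a\in A^1_+}\mathbf{d}(p,a)=0$ being the more routine one. For the metric conclusion, note that compactness of $p$ means $\uparrow\!p\neq\emptyset$, so taking any $a\in\uparrow\!p$ we have $p\leq a\leq1$ with $p$ a projection; hence $pap=p$, so $p(1-a)p=0$, so $(1-a)p=0$, i.e.\ $p=pa$ and $\mathbf{d}(p,a)=0$. For closedness, recall from the proof of \autoref{CompactFilter} that $\uparrow\!p$ is a $\mathbf{d}$-filter, so by \autoref{FilterEqvs}\eqref{<=fil} it is the norm closure of a $\leq$-filter $G$ with $p=\bigwedge G$; in particular $p$ is a decreasing pointwise limit on $\mathsf{Q}$ of the net $G\subseteq A^1_+$. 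Fixing $a\in\uparrow\!p$, the net $(a^{1/2}ga^{1/2})_{g\in G}$ decreases to $p$ and lies below $a$, so $a-p=\bigvee_{g\in G}(a-a^{1/2}ga^{1/2})$ is an increasing limit of elements of $A_+$ and hence its range projection $[a-p]=\bigvee_{g\in G}[a-a^{1/2}ga^{1/2}]$ is open, with $[a-p]\leq p^\perp$. Since $\uparrow\!p$ is $\mathbf{d}$-cofinal, nondegeneracy of the action of $A$ gives $\bigvee_{a\in\uparrow p}[a-p]=p^\perp$, so $p^\perp$ is a supremum of open projections, hence open; thus $p$ is closed. (This last step is standard; see \cite{Akemann1971}.)

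For the converse, suppose $p$ is closed and $\inf_{a\in A^1_+}\mathbf{d}(p,a)=0$. Applying \autoref{compactequivalents}, $p(\ll\circ\ \mathbf{e})\precapprox p\mathbf{d}$, so $\inf_{a\in A^1_+}(\ll\circ\ \mathbf{e})(p,a)=0$ and in particular there is $c\in A^1_+$ with $p\ll c$, i.e.\ $pc=cp=p$ and so $p\leq c$. Hence $\uparrow\!p$ is a non-empty $\mathbf{d}$-filter and, by \autoref{CompactFilter}, $r:=\bigwedge\uparrow\!p$ is a (compact) projection with $p\leq r$; it remains to show $r\leq p$. From $p\leq c\leq\sqrt c\leq1$ and $r\leq c\leq\sqrt c\leq1$, with $p$ and $r$ projections, we get $\sqrt c\,p=p=p\sqrt c$ and $\sqrt c\,r=r=r\sqrt c$. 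Now fix any $b\in A^1_+$ with $b\leq p^\perp$ and set
\[a=\sqrt c\,(1-b)\sqrt c=c-\sqrt c\,b\sqrt c\in A.\]
Then $0\leq a\leq c\leq1$, so $a\in A^1_+$, and $1-b\geq p$ yields $a\geq\sqrt c\,p\sqrt c=p$, so $a\in\uparrow\!p$ and thus $r\leq a$. Since $a\in A^1_+$ and $r$ is a projection, $r\leq a$ is equivalent to $rar=r$; but $rar=r\sqrt c\,(1-b)\sqrt c\,r=r(1-b)r=r-rbr$, forcing $rbr=0$ and hence $br=0$, i.e.\ $b\leq r^\perp$. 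As $p$ is closed, $p^\perp=\bigvee\{b\in A^1_+:b\leq p^\perp\}$, and since every such $b$ lies below $r^\perp$ we conclude $p^\perp\leq r^\perp$, i.e.\ $r\leq p$. Therefore $p=r=\bigwedge\uparrow\!p$ is compact.

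The substantive input is \autoref{compactequivalents}, which converts the purely metric hypothesis into the existence of some $c\in A^1_+$ with $p\ll c$; with $c$ in hand, the identity $a=\sqrt c(1-b)\sqrt c$ — in effect, conjugating $1-b$ down into $A$ — makes the converse almost formal. The step I expect to require the most care is the non-unital case of \emph{compact} $\Rightarrow$ \emph{closed}: since in general $1-a\notin A$ for $a\in\uparrow\!p$, the unital argument comparing $1-a$ with the interior of $p^\perp$ does not apply verbatim, and one must instead present $p^\perp$ as a supremum of honest open projections of $A$, as indicated above.
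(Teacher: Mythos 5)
Your proof is correct, and the two halves compare differently with the paper. For the converse (closed $+$ $\inf_{a\in A^1_+}\mathbf{d}(p,a)=0$ $\Rightarrow$ compact) you follow essentially the paper's route: \autoref{compactequivalents} produces $c\in A^1_+$ with $p\ll c$, and then one conjugates $\downarrow\!p^\perp$ into $\uparrow\!p$ and uses $p^\perp=\bigvee\downarrow\!p^\perp$. The paper does this by computing $p=apa=\bigwedge\{ab^\perp a:b\in\ \downarrow\!p^\perp\}\geq\bigwedge\uparrow\!p$ (which needs the parenthetical fact that pointwise infima on $\mathsf{Q}$ survive conjugation), whereas you work with $r=\bigwedge\uparrow\!p$ directly and extract $rbr=0$, hence $b\leq r^\perp$ for every $b\in\ \downarrow\!p^\perp$, giving $p^\perp\leq r^\perp$; that is a slightly cleaner bookkeeping of the same idea. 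For compact $\Rightarrow$ closed your argument genuinely differs in presentation: the paper takes the hereditary algebra $B=\{a\in A:ap=0=pa\}$, notes $B^1_+=\ \downarrow\!p^\perp$, and verifies $\bigvee B^1_+=p^\perp$ by showing every $\phi\in\mathsf{Q}$ with $\phi(p)=0$ nearly attains its norm on elements $a_n^\perp ba_n^\perp$ with $a_n\in\ \uparrow\!p$, $\phi(a_n)\to0$ (this is where compactness, i.e.\ $p=\bigwedge\uparrow\!p$ pointwise on $\mathsf{Q}$, enters); you instead exhibit $p^\perp$ as $\bigvee_{a\in\uparrow p}[a-p]$ with each $[a-p]$ open as a supremum of range projections of the elements $a-\sqrt{a}g\sqrt{a}\in A_+$. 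The one step you state without proof, $\bigvee_{a\in\uparrow p}[a-p]\geq p^\perp$ ``by $\mathbf{d}$-cofinality and nondegeneracy,'' is exactly the crux and is where the paper's quasistate/GNS computation lives; it does go through (a normal $\phi$ killing $p$ and all $[a-p]$ kills every $a\in\ \uparrow\!p$, so $\pi_\phi(a)\xi_\phi=0$, and $\mathbf{d}$-cofinality of the $\mathbf{d}$-filter $\uparrow\!p$ then forces $\pi_\phi(A)\xi_\phi=0$, hence $\phi=0$), but you should spell it out rather than wave at it. Finally, your detour through range projections and increasing limits is avoidable: since $pgp=p$ for $g\in G\subseteq\ \uparrow\!p$, each element $a-\sqrt{a}g\sqrt{a}$ already lies in $\downarrow\!p^\perp$, so you could argue directly that $\bigvee\downarrow\!p^\perp=p^\perp$, which is precisely the paper's formulation with $B^1_+=\ \downarrow\!p^\perp$.
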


\begin{proof}
If $p$ is compact then $\uparrow\!p$ is non-empty so certainly $\inf_{a\in A^1_+}\mathbf{d}(p,a)=0$.  To see that $p$ is closed, consider
\[B=\{a\in A:ap=0=pa\},\]
which is immediately seen to be a (hereditary) C*-(sub)algebra.  So $B^1_+$ is $\mathbf{d}$-directed and hence has a supremum $\bigvee B^1_+$ in $A^{**}$ which is a projection and also a pointwise supremum on $\mathsf{Q}$.  For any $\phi\in\mathsf{Q}$ with $\phi(p)=0$, we have $a_n\in\ \uparrow\!p$ with $\phi(a_n)\rightarrow0$.  We also have $b_n\in A$ with $\phi(b_n)\rightarrow\norm{\phi}$ so $a_n^\perp ba_n^\perp\in B^1_+$ and $\phi(a_n^\perp ba_n^\perp)\rightarrow\norm{\phi}$ (by the GNS construction).  Thus $p^\perp=\bigvee B^1_+$ is open so $p$ is closed.

If $p$ is closed then, by \eqref{plle},
\begin{equation}\label{inf=>ll}
\inf_{a\in A^1_+}\mathbf{d}(p,a)=0\qquad\Rightarrow\qquad\exists a\in A^1_+\ (p\ll a).
\end{equation}
Alternatively, note $\inf_{a\in A^1_+}\mathbf{d}(p,a)=0$ implies the facial support $\{\phi\in\mathsf{Q}:\phi(p)=1\}$ is weak* closed in $\mathsf{Q}$ so \cite[Lemma 2.4]{AkemannAndersonPedersen1989} yields \eqref{inf=>ll}.  In any case, we can take $a\in\ \uparrow\!p$.  For all $b\in\ \downarrow p^\perp$, we then have $ab^\perp a\in\ \uparrow\!p$.  Also, as $p$ is closed, i.e., $p^\perp$ is open, we have $p=p^{\perp\perp}=(\bigvee\downarrow\!p^\perp)^\perp=\bigwedge(\downarrow\!p^\perp)^\perp$ so
\[p=apa=\bigwedge a(\downarrow\!p^\perp)^\perp a=\bigwedge\{ab^\perp a:b\in\ \downarrow\!p^\perp\}\geq\bigwedge\uparrow\!p\geq p.\]
(for the second equality note, as $\phi(a\cdot a)\in\mathsf{Q}$ whenever $a\in A^1_+$ and $\phi\in\mathsf{Q}$, $\inf_{c\in C}\phi(c)=\phi(d)$, for all $\phi\in\mathsf{Q}$, implies $\inf_{c\in C}\phi(aca)=\phi(ada)$, for all $\phi\in\mathsf{Q}$).  Thus $p$ is compact.
\end{proof}

For \eqref{inf=>ll}, it is crucial for $p$ to be closed.

\begin{thm}\label{boundedopen}
It is possible to have open $p\in A^{**}$ with
\[\inf_{a\in A^1_+}\mathbf{d}(p,a)=0\qquad\text{but}\qquad\nexists a\in A^1_+\ (p\ll a).\]
\end{thm}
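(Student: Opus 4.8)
The plan is to exhibit a \emph{concrete} (necessarily non-unital) $\mathrm{C}^*$-algebra $A$ and an open projection $p\in A^{**}$ for which $\mathbf{d}(p,\cdot)$ has infimum $0$ on $A^1_+$ but never vanishes there; since $\mathbf{d}(p,a)=0\iff p\ll a$ for $a\in A^1_+$, this is exactly the assertion. By the corollary preceding the theorem, $\inf_{a\in A^1_+}\mathbf{d}(p,a)=0$ \emph{together with} closedness would force $p$ to be compact, hence $\uparrow\!p\neq\emptyset$, so $p$ must be open but not closed; more to the point, the failure of \eqref{inf=>ll} must come from an obstruction to domination that is ``delocalized''. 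Indeed, the computations behind the commutative remark and the $C([0,1],M_2)$ example just above show that a single fibre in which $p$ overshoots the ambient algebra simultaneously obstructs \emph{approximate} domination and pins $\inf_a\mathbf{d}(p,a)$ at $1$; so one needs an $A$ in which the obstruction to $p\ll a$ is jointly witnessed by no single $a\in A^1_+$, while every cofinite portion of it is witnessed by suitable $a_\varepsilon\in A^1_+$.

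Concretely, I would take $A$ to be a $\mathrm{C}^*$-algebra of sections of a continuous field over a suitable base, with the sections constrained fibrewise to lie in proper sub-$\mathrm{C}^*$-algebras along a net of ``boundary'' fibres, and let $p$ be the open projection whose associated hereditary subalgebra is read off from a projection-valued datum that conforms to those fibre constraints only asymptotically, never exactly. The verification then has three parts. First, $p=\bigvee\{a\in A^1_+:a\leq p\}$, i.e.\ $p$ is open -- arranged directly in the construction (ensuring $A$ contains enough coordinatewise pieces below $p$) and phrased via \autoref{FilterEqvs}. Second, $\uparrow\!p=\emptyset$: any $a\in A^1_+$ with $p\ll a$ would, by continuity of the field, be pushed in each boundary fibre into a position incompatible with that fibre's defining sub-$\mathrm{C}^*$-algebra (e.g.\ forced to act as the identity there, or to converge to an element not in the fibre), the point being that each of these fibre conditions is individually satisfiable but the family is jointly unsatisfiable. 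Third, $\inf_{a\in A^1_+}\mathbf{d}(p,a)=0$: given $\varepsilon>0$ one builds $a_\varepsilon\in A^1_+$ that handles all but an $\varepsilon$-fraction of the boundary fibres exactly and the rest to within $\varepsilon$, glued using the slack in $A$; the estimate on $\mathbf{d}(p,a_\varepsilon)$ is then run through $\mathbf{h}(p,a)=\sup_{\phi\in\mathsf{Q}}\phi(p-a)$ from \eqref{||a+||}, the monotonicity of $\phi\mapsto\phi(p-a)$, and the uniform equivalences $\mathbf{d}\approx\mathbf{h}\leq\mathbf{e}$ (with \autoref{h=n} available if the $a_\varepsilon$ are built ``from above'').

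The hard part is the third point, or rather making the second and third hold \emph{simultaneously}. As the near-misses indicate, any finite-dimensional matrix fibre forces the obstruction of the second point to be ``visible to $\mathbf{d}$'', collapsing the infimum to a positive number, so the construction seems to require either genuinely infinite-dimensional fibres arranged so that the approximants $a_\varepsilon$ drift off without converging or concentrating as $\varepsilon\to0$, or a non-separable $A$ -- consistent with the paper's remarks that $\omega_1$-unital algebras are better behaved. The reason one cannot cheat -- for instance by taking $p$ to be a norm limit of compact projections, which would immediately give $\inf_a\mathbf{d}(p,a)=0$ -- is \autoref{symex}: once some compact projection lies within distance $<1$ of $p$, it is exchanged with $p$ by a symmetry in $\widetilde{A}$, so $p$ is itself compact and $\uparrow\!p\neq\emptyset$. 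Thus the approximants are genuinely forced to spread out, and pinning down an $A$ where they can do so while $\uparrow\!p$ stays empty is where the real work lies.
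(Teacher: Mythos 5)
There is a genuine gap: the statement is an existence claim, so its proof consists precisely of exhibiting a concrete algebra $A$ and open projection $p$ and verifying the two displayed properties, and your proposal never does this. What you give is a list of desiderata (``a $\mathrm{C}^*$-algebra of sections of a continuous field \ldots\ with the sections constrained fibrewise \ldots\ and $p$ read off from a projection-valued datum that conforms to those constraints only asymptotically''), together with an accurate diagnosis of why finite-dimensional fibres won't work, and then an explicit admission that ``pinning down an $A$'' is ``where the real work lies.'' That real work is the entire content of the theorem. For comparison, the paper's proof does the pinning down: inside $C([0,1],\mathcal{B}(H))$ it takes rank-one projections $P_n$ switched on above points $x_n$ accumulating at $0$, sets $p=\bigvee p_n$, adjoins to the hereditary subalgebra $C$ (determined by $p\vee q$, with $q$ built from the vector $\sum 2^{-n}e_n$) the constant projection $Q^\perp$, and then checks by hand that $a_n=Q^\perp+f_nq$ gives $\mathbf{d}(p,a_n)\to0$, while any $a\in A^1_+$ with $p\ll a$ would have fibre values $1-f(x)q'(x)$ whose continuity is incompatible with the discontinuity of $q'=p\vee q-p$ at the points $x_n$ and with $qa(0)=0$. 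None of the three verifications you list (openness of $p$, emptiness of $\uparrow\!p$, vanishing of the infimum) is actually carried out for any specified $A$, so the proposal cannot be accepted as a proof.

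A secondary inaccuracy: your ``no cheating'' remark misuses \autoref{symex}. That lemma concerns projections in $\mathcal{P}\subseteq A$ and produces a symmetry in $\widetilde{A}$; applied instead to projections of $A^{**}$ it would only give a symmetry in $A^{**}$, whose inner automorphism need not preserve $A$, so closeness to a compact projection does not transfer compactness (which is a property relative to $A^1_+$, not to $A^{**}$). The observation is tangential to the theorem, but as stated it is not justified.
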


\begin{proof}
We consider a variant of the non-regular open dense projection considered in \cite[Example 4]{AkemannBice2014}, where $A$ is a C*-subalgebra of $C([0,1],\mathcal{B}(H))$, i.e., the continuous functions from $[0,1]$ to $\mathcal{B}(H)$ for a separable infinite dimensional Hilbert space $H$.  First let $(x_n)$ be a countable dense subset of $(0,1)$ (actually, it suffices to have $\inf_nx_n=0$), let $(e_n)$ be an orthonormal basis for $H$ and let $(P_n)$ be the rank $1$ projections onto $(\mathbb{C}e_n)$.  Define $p_n:[0,1]\rightarrow\mathcal{B}(H)$ by
\[p_n(x)=\begin{cases}P_n&\text{if }x>r_n\\ 0&\text{if }x\leq r_n.\end{cases}\]
Let $Q$ be the projection onto $\mathbb{C}v$, for $v=\sum2^{-n}e_n$, and define $q:[0,1]\rightarrow\mathcal{B}(H)$ by
\[q(x)=\begin{cases}Q&\text{if }x>0\\ 0&\text{if }x=0.\end{cases}\]
Let $B$ and $C$ be the hereditary C*-subalgebras of $K=C([0,1],\mathcal{K}(H))$ defined by $p=\bigvee p_n$ and $p'=p\vee q$, i.e.
\[B=pKp\cap K\qquad\text{and}\qquad C=p'Kp'\cap K.\]
Let $A$ be the C*-subalgebra of $C([0,1],\mathcal{B}(H))$ generated by $C$ and the constant projection $Q^\perp$.  Let $a_n=Q^\perp+f_nq\in A$, for some continuous function $f_n$ on $[0,1]$ with $f_n(0)=0$ and $f_n(x)=1$, for all $x\in[\frac{1}{n},1]$.  Then
\[\sup_{b\in B}\mathbf{d}(b,a_n)=\mathbf{d}(p,a_n)=\norm{p(1-f_n)Q}\rightarrow0,\]
i.e., $\inf_{a\in A^1_+}\mathbf{d}(p_B,a)=0$ for $p_B=\bigvee B^1_+\in A^{**}$.  However, for each $x\in(0,1)$,
\[\{a(x):a\in A\}=\mathbb{C}1+\mathcal{K}(q'Hq').\]
Thus if $p\leq a\in A^1_+$ then, for all $x\in(0,1)$, we must have $a(x)=1-f(x)q'(x)$, where $q'(x)=(p\vee q-p)(x)$ and $f$ is some continuous function on $[0,1]$.  But $q'$ is discontinuous at each $r_n$, so the only way $a$ could be continuous is if $f(r_n)=0$ so $a(r_n)=1$ and hence $qa(r_n)=Q$, for all $n$.  But then continuity yields $qa(0)=Q$, contradicting the fact $qa(0)=0$, by the definition of $A$.  Thus there is no $a\in A^1_+$ with $p_B\ll a$.
\end{proof}

There are several other boundedness conditions on $p$ that one might consider.  However, they are all equivalent, even in a more general context.

\begin{prp}\label{BoundedEqvs}
For any $a\in A^1_+$, $r>1$ and C*-subalgebra $B\subseteq A$, TFAE.
\begin{enumerate}
\item\label{a<=b} $\exists b\in B^r_+\ (a\leq b)$.
\item\label{hab} $\inf_{b\in B_\mathrm{sa}}\mathbf{h}(a,b)=0$.
\item\label{dab+} $\inf_{b\in B^1_+}\mathbf{d}(a,b)=0$.
\item\label{dab} $\inf_{b\in B}\mathbf{d}(a,b)=0$.
\end{enumerate}
\end{prp}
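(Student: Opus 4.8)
The plan is to establish the four equivalences by a cycle of implications, exploiting the uniform equivalences $\mathbf{h}\approx\mathbf{p}\approx\mathbf{n}$ from \autoref{h=p} and \autoref{h=n}, together with the basic inequalities $\mathbf{h}\leq 2\mathbf{d}$ and $\mathbf{d}^2\leq\mathbf{h}\circ\mathbf{d}$ on $A^1_+$. I would run the cycle \eqref{a<=b}$\Rightarrow$\eqref{dab+}$\Rightarrow$\eqref{dab}$\Rightarrow$\eqref{hab}$\Rightarrow$\eqref{a<=b}, since each step then uses exactly one of the available tools.

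\textbf{The implications.} For \eqref{a<=b}$\Rightarrow$\eqref{dab+}: given $a\leq b\in B^r_+$, I would first pass to $b'=r^{-1}b\in B^1_+$ and control $\mathbf{d}(a,b')$ via functional calculus on $b$ — roughly, using that $a\leq b$ forces $\mathbf{h}(a,b')$ small when $r$ is close to $1$, then $\mathbf{d}$-approximating $b'$ from within $B^1_+$ by elements $g_n(b)$ for suitable $g_n\uparrow 1$ on $(0,1]$, invoking $\mathbf{d}b\approx\mathbf{d}b^n$ (equation \eqref{dan}). Actually the cleaner route: $a\leq b$ with $b\in B_+$, scale to get $a\leq b\leq b^{1/2}\leq\cdots$; since $B$ is a C*-subalgebra it contains $g(b)$ for all $g\in C_0(0,\|b\|]$, and $\mathbf{d}(a,g(b))\to 0$ as $g\uparrow 1$ pointwise by \eqref{tri} and \eqref{d2<hd}, because $\mathbf{h}(a,g(b))\leq\mathbf{h}(b^{1/n},g(b))+\mathbf{h}(a,b^{1/n})$ and both terms are controlled. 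The implication \eqref{dab+}$\Rightarrow$\eqref{dab} is trivial since $B^1_+\subseteq B$. For \eqref{dab}$\Rightarrow$\eqref{hab}: if $\mathbf{d}(a,b_n)\to 0$ with $b_n\in B$, then using $\mathbf{d}^2\leq\mathbf{h}\circ\mathbf{d}$ on $A^1_+$ one cannot immediately conclude since $b_n$ need not be in $A^1_+$; instead I would note $\mathbf{h}\leq 2\mathbf{d}$ holds on $A^1_+$ but we need a version for general $b$. The right move is: $\mathbf{d}(a,b)=\|a-ab\|$ small means $ab\approx a$, so $ab_nb_n^*a^*$... — better to first reduce to $b_n\in B^1_+$ by noting $\mathbf{d}(a,b)<1\Rightarrow\mathbf{d}(a,f(b^*b))$ controlled for the truncation $f(t)=t\wedge 1$, which lies in $B^1_+$; then apply $\mathbf{h}\leq 2\mathbf{d}$ directly. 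So really \eqref{dab}$\Rightarrow$\eqref{dab+} first, then \eqref{dab+}$\Rightarrow$\eqref{hab} via $\mathbf{h}\leq 2\mathbf{d}$. Finally \eqref{hab}$\Rightarrow$\eqref{a<=b}: given $b_n\in B_\mathrm{sa}$ with $\mathbf{h}(a,b_n)\to 0$, use $\mathbf{h}\approx\mathbf{p}$ (\autoref{h=p}, extended from $A^1_+$ — here I would truncate $b_n$ to $(b_n)_+\wedge 1\in B^1_+$ first, legitimate since $\mathbf{h}(a,b)$ only sees the part of $b$ above $a$) to get $c_n\in A^1_+$ with $c_n\leq b_n'$ and $\|a-c_n\|\to 0$, hence $a\leq c_n+\epsilon_n$; but I need the dominating element in $B$. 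Instead use $\mathbf{n}\approx\mathbf{h}$ (\autoref{h=n}): $\mathbf{h}(a,b_n)\to 0$ gives, after truncating $b_n$ into $B^1_+$, elements $c_n$ with $a\leq c_n\leq 1$ and $\|c_n-b_n'\|\to 0$ — still not in $B$. The actual fix: once $\mathbf{h}(a,b')<\delta$ with $b'\in B^1_+$, the element $b''=b'+(a-b')_+$ satisfies $a\leq b''$ and $\|b''-b'\|=\mathbf{h}(a,b')<\delta$, but $(a-b')_+\notin B$ in general. So I would instead argue: $\mathbf{h}(a,b')<\delta$ implies, for $s=(1-\delta)^{-1}$, that $a\leq s\cdot b'+\delta s\cdot 1$... no — rather, $a-b'\leq\delta$ in $A_\mathrm{sa}$, so $a\leq b'+\delta$; scaling, $(1+\delta)^{-1}a\leq(1+\delta)^{-1}(b'+\delta)$, and taking $\epsilon\downarrow$ — this gives $a\leq b'+\delta$ with $b'+\delta\in\widetilde B$, not quite $B^r_+$. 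The honest route uses that $\delta$ can be taken arbitrarily small and a limiting/rescaling argument: from $\mathbf{h}(a,b_n)\to 0$ with $b_n\in B^1_+$, set $b=\sum_n 2^{-n}s_n b_n$ for suitable scalars so that $a\leq b$...

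\textbf{Main obstacle.} The genuinely delicate point is \eqref{hab}$\Rightarrow$\eqref{a<=b}: producing a \emph{single} $b\in B^r_+$ dominating $a$ from a \emph{sequence} of approximations, while staying inside the subalgebra $B$ (so the $(a-b')_+$ trick is unavailable). I expect the resolution to mirror the interpolation/telescoping argument in the proof of \autoref{compactequivalents}: iterate the approximation, at stage $n$ choosing $b_n\in B^1_+$ with $\mathbf{h}(a,b_n)<2^{-n}(r-1)$ and such that the partial combinations $\sum_{k\le n}c_k b_k$ (with $\sum c_k$ just over $1$) converge in $B$ to some $b$ with $a\leq b$ and $\|b\|<r$; the convexity/$\leq$-closure structure of $A^1_+$ from \autoref{FilterEqvs}, applied inside $B$, is what makes the telescoping land in $B^r_+$. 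The remaining implications are routine given \autoref{h=p}, \autoref{h=n} and the displayed inequalities $\mathbf{h}\leq 2\mathbf{d}$, $\mathbf{d}^2\leq\mathbf{h}\circ\mathbf{d}$.
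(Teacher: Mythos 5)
Your plan never actually proves the one genuinely hard implication, \eqref{hab}/\eqref{dab+}$\Rightarrow$\eqref{a<=b}. You correctly discard the $(a-b')_+$ and $b'+\delta$ repairs (both leave $B$), but what you offer instead \textendash\, partial sums $\sum_{k\le n}c_kb_k$ with $b_k\in B^1_+$, $\mathbf{h}(a,b_k)<2^{-k}(r-1)$, hoped to converge to some $b\in B^r_+$ with $a\le b$ \textendash\, is a declaration of intent, not an argument. Convex combinations do keep $\mathbf{h}$ small, since $\mathbf{h}(a,\sum c_kb_k)\le\sum c_k\mathbf{h}(a,b_k)$, but ``$\mathbf{h}(a,b)$ arbitrarily small with $b\in B^1_+$'' is exactly the hypothesis you started from; upgrading it to genuine domination $a\le b$ by a \emph{single} element of the subalgebra, with norm below $r$, is the entire content of the step and needs a recursively constructed series with telescoping estimates (Akemann's interpolation argument), which neither your sketch nor \autoref{compactequivalents} (which produces $b\in A^1_+$ with $p\ll b$, not an element of a prescribed subalgebra dominating $a$) supplies. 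The paper does not reprove this implication either: it handles \eqref{dab+}$\Rightarrow$\eqref{a<=b} by citing \cite[Theorem 1.2]{Akemann1970b}. So as written your proof is missing its essential ingredient, and at minimum needs that reference or a full Akemann-style construction.

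There is also a concrete error earlier in your cycle: the proposed reduction of \eqref{dab} to \eqref{dab+} via the truncation $f(t)=t\wedge1$, i.e.\ the claim that $\mathbf{d}(a,b)$ small forces $\mathbf{d}(a,f(b^*b))$ small, is false. In $A=B=M_2$ take $|b|=\mathrm{diag}(2^{-1/2},T)$ with $T$ large, a unit vector $\mu$ such that $\xi=|b|\mu$ is also a unit vector, a unitary $v$ with $v\mu=\xi$, $b=v|b|$ and $a$ the rank-one projection onto $\mathbb{C}\xi$; then $b^*\xi=|b|v^*\xi=|b|\mu=\xi$, so $\mathbf{d}(a,b)=\norm{a-ab}=0$, while $\norm{a-af(b^*b)}=\norm{(1-f(b^*b))\xi}\rightarrow2^{-3/2}$ as $T\rightarrow\infty$. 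Thus small $\norm{a-ab}$ with $\norm{b}$ unbounded does not control the truncation, and your cycle breaks at \eqref{dab}$\Rightarrow$\eqref{dab+}. The paper avoids truncation altogether: for \eqref{dab}$\Rightarrow$\eqref{hab} it uses that $a\le1$ gives $b^*ab\le b^*b\in B_\mathrm{sa}$ (no norm bound needed in \eqref{hab}), so $\mathbf{h}(a,b^*b)\le\mathbf{h}(a,b^*ab)\le\norm{a-b^*ab}$; then \eqref{hab}$\Rightarrow$\eqref{dab+} is one line from $\mathbf{d}^2\le\mathbf{h}\circ\mathbf{d}$ taken in $A_\mathrm{sa}$ together with an approximate unit of $B$, and \eqref{a<=b}$\Rightarrow$\eqref{hab} is immediate because $a\le b\in B^r_+$ gives $\mathbf{h}(a,b)=0$ outright \textendash\, so the functional-calculus work you invest in \eqref{a<=b}$\Rightarrow$\eqref{dab+}, while salvageable, is also more than is needed.
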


\begin{proof}  We immediately have \eqref{a<=b}$\Rightarrow$\eqref{hab} and \eqref{dab+}$\Rightarrow$\eqref{dab}.
\begin{itemize}
\item[\eqref{hab}$\Rightarrow$\eqref{dab+}]  By \eqref{d2<dh} (and the existence of an approximate unit for $B$ in $B^1_+$),
\[\inf_{b\in B^1_+}\mathbf{d}(a,b)^2\leq\inf_{c\in B_\mathrm{sa}}\inf_{b\in B^1_+}(\mathbf{h}(a,c)+\mathbf{d}(c,b))=\inf_{c\in B_\mathrm{sa}}\mathbf{h}(a,c).\]

\item[\eqref{dab}$\Rightarrow$\eqref{hab}]  If $\mathbf{d}(a,b_n)\rightarrow 0$, i.e., $ab_n\rightarrow a$ and hence $b_n^*ab_n\rightarrow a$, then
\[\mathbf{h}(a,b_n^*b_n)\leq\mathbf{h}(a,b_n^*ab_n)\leq\mathbf{e}(a,b_n^*ab_n)\rightarrow0.\]

\item[\eqref{dab+}$\Rightarrow$\eqref{a<=b}]  See \cite[Theorem 1.2]{Akemann1970b}.\qedhere
\end{itemize}
\end{proof}

Another relation on compact relations one might like to quantify is `interior containment'.  Specifically, define the \emph{interior} $p^\circ$ of any projection $p\in A^{**}$ to be the largest open projection below $p$, i.e.
\[p^\circ=\bigvee\downarrow\!p.\]
We quantify the interior containment relation $p\leq q^\circ$ by the distance
\[\mathbf{c}(p,q)=\norm{p-pq^\circ}.\]
By Akemann's non-commutative Urysohn lemma \textendash\, see \cite[Lemma III.1]{Akemann1971} \textendash\, $p\leq q^\circ$ is equivalent to $\exists a\in A^1_+\ (p\leq a\leq q)$.  For commutative $A$, this means
\[\mathbf{c}(p,q)=\inf_{a\in F}\sup_{b\in G}\mathbf{d}(a,b),\]
where $F=\ \uparrow\!p$ and $G=\ \uparrow\!q$.  However, this does not extend non-commutative $A$ and in general $\mathbf{c}$ can behave quite badly with respect to the metric $\mathbf{e}$.

\begin{thm}
It is possible to have compact $p,q\in A^{**}$ with
\begin{equation}\label{infsup}
\inf_{a\in F}\sup_{b\in G}\mathbf{d}(a,b)=0\qquad\text{but}\qquad p\nleq q^\circ,
\end{equation}
where $F=\ \uparrow\!p$ and $G=\ \uparrow\!q$.  It is also possible that $\mathbf{c}\not\precapprox\mathbf{e}\circ\mathbf{c}$ on compact projections.
\end{thm}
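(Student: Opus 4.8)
The plan is to establish both assertions by an explicit construction, modelled on the one in \autoref{boundedopen}. We work inside a carefully chosen C*-subalgebra $A$ of $C([0,1],\mathcal{B}(H))$, with $H$ separable infinite-dimensional, whose fibre over each $x\in(0,1)$ has the form $\mathbb{C}1+\mathcal{K}(V_x)$ for a suitable increasing field $(V_x)$ of closed subspaces, and whose fibre over $0$ is cut down even more severely.

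For the first assertion, inside such an $A$ one builds compact projections $p$ and $q$ in $A^{**}$ — \emph{compact} meaning \emph{closed} (verified via upper semicontinuity and the faithfulness of the atomic representation on closed projections, \cite[Theorem 4.3.15]{Pedersen1979}) together with boundedness, the latter checked directly as in \autoref{boundedopen} — arranged so that $p\leq q$ while the interior $q^\circ$ is strictly smaller than $q$ over $0$, in such a way that $p$ still sits over $0$ in the direction $q^\circ$ has dropped. The way the fibres collapse as $x\to0$ is precisely what prevents any element of $A^1_+$ below $q$ from retaining that direction over $0$, so there is no $a\in A^1_+$ with $p\leq a\leq q$, whence $p\nleq q^\circ$ by Akemann's non-commutative Urysohn lemma (\cite[Lemma III.1]{Akemann1971}); the obstruction is a continuity argument at $0$ of exactly the type used at the end of the proof of \autoref{boundedopen}. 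At the same time — and this is where the competing constraint on $A$ is exploited — for every $\epsilon>0$ one produces, by following the field structure just off $0$, an element $a\in\ \uparrow\!p$ with $\norm{aq^\perp}<\epsilon$. Since every $b\in\ \uparrow\!q$ satisfies $0\leq b^\perp\leq q^\perp$, and hence $\mathbf{d}(a,b)=\norm{ab^\perp}\leq\norm{aq^\perp}$, this gives $\sup_{b\in G}\mathbf{d}(a,b)\leq\norm{aq^\perp}<\epsilon$, so $\inf_{a\in F}\sup_{b\in G}\mathbf{d}(a,b)=0$ for $F=\ \uparrow\!p$ and $G=\ \uparrow\!q$, while $p\nleq q^\circ$ — which is \eqref{infsup}.

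The heart of the matter, and the step I expect to be the main obstacle, is making these two features hold simultaneously: $q$ must be `closed enough' over $0$ that $q^\circ$ loses the relevant direction of $p$, yet `regular enough' just off $0$ that members of $\uparrow\!p$ can be pushed to be almost $\ll q$. This is the same balancing act performed in \autoref{boundedopen}, and the accompanying verifications — that $p$ and $q$ are genuinely closed, that $q^\circ$ is as claimed, and that the approximants witnessing the second feature really belong to $A$ — should be comparably intricate.

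For the second assertion, I would assemble a sequence $(p_n,q_n)$ of rescaled, mutually disjointly supported copies of the pair just constructed into a single C*-algebra (for instance a $c_0$-type sum of rescalings of $A$). Since $p_n\nleq q_n^\circ$ with a uniform lower bound on $\norm{p_n-p_nq_n^\circ}$, we have $\mathbf{c}(p_n,q_n)\geq\delta>0$ for all $n$. On the other hand, because the passage $q\mapsto q^\circ$ is badly discontinuous for the metric $\mathbf{e}$, an arbitrarily small $\mathbf{e}$-perturbation $q_n'$ of $q_n$ can be chosen compact with $p_n\leq q_n'^\circ$, so that the interior recaptures the direction $q_n^\circ$ had lost; routing through such $q_n'$ makes the composition of $\mathbf{e}$ with $\mathbf{c}$ appearing in the statement tend to $0$, and hence $\mathbf{c}\not\precapprox\mathbf{e}\circ\mathbf{c}$ on compact projections.
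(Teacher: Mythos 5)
There is a genuine gap: in both halves you describe a construction rather than give one. For the first assertion, the entire content of the statement is the existence of the pair $(p,q)$, and your proposal defers exactly that — ``one builds'' compact $p\leq q$ with $q^\circ$ losing a direction of $p$ at $0$ while elements of $\uparrow\!p$ can be made almost $\ll q$, and you yourself flag this balancing act as ``the main obstacle'' whose verifications ``should be comparably intricate.'' Nothing is exhibited, so nothing is proved. What you miss is that no new construction is needed at all: the paper simply recycles \autoref{boundedopen}. If $p$ is the open projection there, with $\inf_{a\in A^1_+}\mathbf{d}(p,a)=0$ but no $a\in A^1_+$ with $p\ll a$, then passing to the unitization $\widetilde{A}$ and taking complements does the job, because in a unital C*-algebra compact is the same as closed and $\mathbf{d}(a,b)=\mathbf{d}(b^\perp,a^\perp)$; with $q=\bigvee A^1_+$ one gets compact $q^\perp\leq p^\perp$ in $\widetilde{A}^{**}$ with $\inf_{a\in\uparrow q^\perp}\sup_{b\in\uparrow p^\perp}\mathbf{d}(a,b)=0$ yet $q^\perp\nleq p^{\perp\circ}$. (Your reduction $\mathbf{d}(a,b)\leq\norm{aq^\perp}$ for $b\geq q$ is fine since $q$ is a projection, but it is downstream of the missing construction.)

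The second assertion has the same defect compounded: it rests on the claim that ``an arbitrarily small $\mathbf{e}$-perturbation $q_n'$ of $q_n$ can be chosen compact with $p_n\leq q_n'^\circ$,'' justified only by the assertion that $q\mapsto q^\circ$ is badly discontinuous — which is essentially the statement being proved, so the argument is circular; and it is built on top of the pair from the first part, which was never constructed, so there is no reason the needed perturbations exist in your (unspecified) algebra. The paper's proof of this half is independent and elementary: in $A=C([0,1],M_2)$ take the compact projections $p_\epsilon$ with $p_\epsilon(x)=P_{\epsilon\sin(1/x)}$ for $x>0$ and $p_\epsilon(0)=1$, and $p(x)\equiv P_0$; for $\epsilon>0$ the wiggling forces $p_\epsilon^\circ(0)=0$, so $\mathbf{c}(p,p_\epsilon)\geq1$, while routing through $p_0$ gives $(\mathbf{c}\circ\mathbf{e})(p,p_\epsilon)\leq\norm{P_0-P_\epsilon}\rightarrow0$. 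To repair your proposal you would either have to carry out your field construction and the perturbation claim in full, or adopt reductions of this explicit kind.
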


\begin{proof}
Take open $p\in A^{**}$ as in \autoref{boundedopen}.  Consider $p=\bigvee\downarrow\!p$ in $\widetilde{A}^{**}$ and let $q=\bigvee A^1_+\in\widetilde{A}$ so $0=\inf_{a\in\downarrow\!q}\mathbf{d}(p,a)=\inf_{a\in\downarrow\!q}\sup_{b\in\downarrow p}\mathbf{d}(p,a)$ but $\nexists a\in\ \downarrow\!q\ (p\ll a)$.  In unital C*-algebras, compact $\Leftrightarrow$ closed and $\mathbf{d}(a,b)=\mathbf{d}(b^\perp,a^\perp)$ so
\[0=\inf_{a\in\uparrow q^\perp}\sup_{b\in\downarrow p^\perp}\mathbf{d}(a,p^\perp),\]
even though there no $a\in\ \uparrow\!q^\perp$ with $a\ll p^\perp$, i.e., $q^\perp\nleq p^{\perp\circ}$.

For $\mathbf{c}\not\precapprox\mathbf{c}\circ\mathbf{e}$, let $A=([0,1],M_2)$.  Let $P_\theta$ be the projection onto $\mathbb{C}(\sin\theta,\cos\theta)$,
\[P_\theta=\begin{bmatrix}\sin^2\theta&\sin\theta\cos\theta\\ \sin\theta\cos\theta&\cos^2\theta\end{bmatrix}.\]
For $\epsilon\geq0$, consider the compact projections $p_\epsilon$ represented by
\[p_\epsilon(x)=\begin{cases}P_{\epsilon\sin(1/x)}&\text{if }x>0\\ 1&\text{if }x=0\end{cases}\]
(this is a projection in the atomic representation of $A$ rather than the universal representation $A^{**}$ but again this does not matter as the atomic representation is faithful on open and closed projections, by \cite[Theorem 4.3.15]{Pedersen1979}).  So $p(x)$ is a rank 1 projection which `wiggles' with amplitude $\epsilon$ and increasing frequency as $x\rightarrow0$.  This means that, for $\epsilon>0$, any $a\in A$ with $a\leq p_\epsilon$ must satisfy $a(0)=0$ so
\[p_\epsilon^\circ(x)=\begin{cases}P_{r\sin(1/\theta)}&\text{if }x>0\\ 0&\text{if }x=0.\end{cases}\]
Also let $p$ be the compact projection defined by $p(x)=P_0$, for all $x\in[0,1]$, so $p_0^\circ=p$.  For all $\epsilon>0$, $\mathbf{c}(p,p_\epsilon)\geq\norm{p(0)-p(0)p_\epsilon^\circ(0)}=\norm{P_0}=1$ even though
\[(\mathbf{c}\circ\mathbf{e})(p,p_\epsilon)\leq\mathbf{c}(p,p_0)+\mathbf{e}(p_0,p_\epsilon)=\mathbf{e}(p_0,p_\epsilon)=\norm{P_0-P_\epsilon}\rightarrow0,\quad\text{as }\epsilon\rightarrow0.\qedhere\]
\end{proof}

In other words, $\mathbf{c}$ fails to be $\mathbf{e}$-invariant in a strong way.  This suggests that the `reverse Hausdorff distance' $\inf_{a\in F}\sup_{b\in G}\mathbf{d}(a,b)$ may actually be the more natural extension of interior containment to non-commutative $A$.  This is especially so if one wants to consider $\mathbf{d}$-filters in a domain theoretic way \textendash\, see \cite{BDD}.

We finish by showing that this distance can also be calculated from $\mathbf{h}$.

\begin{thm}
For any $\mathbf{d}$-filters $F,G\subseteq A^1_+$,
\[\inf_{a\in F}\sup_{b\in G}\mathbf{d}(a,b)=\inf_{a\in F}\sup_{b\in G}\mathbf{h}(a,b).\]
\end{thm}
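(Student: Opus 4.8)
The plan is to reduce the two‑sided expression to a one‑sided one and then to exploit the face structure of $F$. By \autoref{CompactFilter} we may take $F=\ \uparrow\!p$ and $G=\ \uparrow\!q$ for compact projections $p,q\in A^{**}$; moreover, as in the proof of that theorem, $q=\bigwedge G$ is the pointwise infimum on $\mathsf{Q}$ of a norm‑dense $\leq$‑directed subfilter $G'\subseteq G$, so the decreasing bounded net $(b)_{b\in G'}$ converges strongly in $A^{**}$ to $q$ and $(b^\perp)_{b\in G'}$ increases strongly to $q^\perp$.

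First I would evaluate the inner suprema. Fix $a\in F$. Since $q\ll b$ for all $b\in G$ we have $\mathbf{d}(q,b)=\norm{qb^\perp}=0$, so the triangle inequality gives $\mathbf{d}(a,b)\leq\mathbf{d}(a,q)$; conversely, writing $\mathbf{d}(a,b)^2=\norm{ab^\perp}^2=\sup_{\norm{\xi}\leq1}\norm{ab^\perp\xi}^2$ as a supremum over unit vectors of the universal Hilbert space, interchanging it with the supremum over $b\in G'$, and using $b^\perp\xi\to q^\perp\xi$, yields $\sup_{b\in G}\mathbf{d}(a,b)=\mathbf{d}(a,q)$. Likewise $\mathbf{h}(a,b)=\sup_{\phi\in\mathsf{Q}}\phi(a-b)$ by \eqref{||a+||}, and interchanging suprema together with $\inf_{b\in G}\phi(b)=\phi(q)$ gives $\sup_{b\in G}\mathbf{h}(a,b)=\sup_{\phi\in\mathsf{Q}}\phi(a-q)=\mathbf{h}(a,q)$. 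So it remains to prove $\inf_{a\in F}\mathbf{d}(a,q)=\inf_{a\in F}\mathbf{h}(a,q)$.

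Next I would record the pointwise comparison. Every $a\in F$ satisfies $p\ll a$, hence $a=p+p^\perp a p^\perp$ and $a^n\geq p$ for all $n$; and the arguments behind \eqref{h<2d} and \eqref{d2<dh} use only \eqref{aba+} and generic C*-manipulations, so they apply with $q$ in place of $b$. As $\mathbf{d}(q,q)=0$ and $qaq\leq q^2=q$, this gives $\mathbf{d}(a,q)^2\leq\mathbf{h}(a,q)\leq2\mathbf{d}(a,q)$ for every $a\in F$; in particular the two infima vanish simultaneously, and in general $\inf_a\mathbf{d}(a,q)\leq(\inf_a\mathbf{h}(a,q))^{1/2}$ and $\inf_a\mathbf{h}(a,q)\leq2\inf_a\mathbf{d}(a,q)$.

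To promote this to equality — which is the crux — I would use that $F$ is a norm‑closed face of $A^1_+$ (\autoref{FilterEqvs}), so $f(a)\in F$ for every $a\in F$ and every continuous $f\colon[0,1]\to[0,1]$ with $f(0)=0$, $f(1)=1$; when $f(x)\leq x$ one has $f(a)\leq a$, whence $\mathbf{h}(f(a),q)\leq\mathbf{h}(a,q)$, and $f(a)^2\leq a^2$ (commuting positives), whence $\mathbf{d}(f(a),q)\leq\mathbf{d}(a,q)$. Running $f$ through ever steeper cut‑offs pushes $a$ towards a projection of $A^{**}$, on which $\mathbf{d}$ and $\mathbf{h}$ coincide by \autoref{dsadp} (whose proof, reverting to the C*-subalgebra generated by two projections, is valid in $A^{**}$), so the gap $|\mathbf{d}(f(a),q)-\mathbf{h}(f(a),q)|$ can be made arbitrarily small without increasing either term past $\min(\mathbf{d}(a,q),\mathbf{h}(a,q))$; taking the infimum over $a\in F$ then gives both inequalities. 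The main obstacle is precisely controlling this passage to the limit when $p\notin A$, i.e.\ when elements of $F$ cannot be norm‑approximated within $A$ by projections: one cannot take the strong limit of $f(a)$ naively, and must instead feed the semicontinuous affine functions $\phi\mapsto\phi(a)-\phi(q)$ on the weak\textendash$^*$ compact $\mathsf{Q}$ into Dini's theorem and combine this with the interpolation estimates of \autoref{h=p} and \autoref{compactequivalents}.
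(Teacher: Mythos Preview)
Your reduction to $\inf_{a\in F}\mathbf{d}(a,q)=\inf_{a\in F}\mathbf{h}(a,q)$ is correct and matches the paper's first step.  The idea that follows \textendash\, push $a$ towards a projection and use $\mathbf{d}=\mathbf{h}$ on projections \textendash\, is also the right one, but your execution has a genuine gap which you yourself flag as ``the main obstacle'' without resolving.  The claim that ``the gap $|\mathbf{d}(f(a),q)-\mathbf{h}(f(a),q)|$ can be made arbitrarily small'' by steepening the continuous cut-off $f$ is unjustified: however steep $f$ is, $\norm{f(a)-f(a)^2}=\sup_{x}f(x)(1-f(x))$ can remain $\tfrac14$, so $f(a)$ never gets norm-close to a projection, and neither $\mathbf{d}(\cdot,q)$ nor $\mathbf{h}(\cdot,q)$ is continuous for the strong topology in which $f(a)\to a_{\{1\}}$.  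Your proposed remedies (Dini, the interpolation estimates of \autoref{h=p} and \autoref{compactequivalents}) do not address this continuity issue.

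The paper sidesteps the obstacle entirely by working with the \emph{actual} spectral projections
\[
P=\{a_{[1-\epsilon,1]}:a\in F,\ \epsilon>0\}\subseteq A^{**},
\]
rather than continuous approximants in $A$.  The point is not to take a limit but to show that $P$ and $F$ are mutually coinitial for both $\mathbf{d}$ and $\mathbf{h}$: for $p=a_{[1-\epsilon,1]}$ one has $\mathbf{d}(p,a),\mathbf{h}(p,a)\leq\epsilon$, and conversely, choosing $g\in C[0,1]$ with $g(0)=0$, $g(1)=1$ and $\mathrm{supp}(g)\subseteq[1-\epsilon,1]$ gives $g(a)\in F$ with $\mathbf{d}(g(a),p)=\mathbf{h}(g(a),p)=0$.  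Two applications of the triangle inequality then yield $\inf_{a\in F}\mathbf{d}(a,q)=\inf_{p\in P}\mathbf{d}(p,q)$ and likewise for $\mathbf{h}$, after which \autoref{dsadp} (valid in $A^{**}$) finishes the proof in one line.  The moral is that once you have reduced to $q\in A^{**}$ there is no reason to keep the other variable in $A$; allowing genuine projections from $A^{**}$ on the $F$-side as well removes the need for any limiting argument.
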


\begin{proof}
As in the proof of \autoref{CompactFilter}, for $q=\bigwedge G\in A^{**}$ we have
\[\inf_{a\in F}\sup_{b\in G}\mathbf{d}(a,b)=\inf_{a\in F}\mathbf{d}(a,q)\quad\text{and}\quad\inf_{a\in F}\sup_{b\in G}\mathbf{h}(a,b)=\inf_{a\in F}\mathbf{h}(a,q).\]
Let $a_S\in A^{**}$ denote the spectral projection of $a\in A^1_+$ corresponding to $S\subseteq[0,1]$ and consider
\[P=\{a_{[1-\epsilon,1]}:a\in F\text{ and }\epsilon>0\}.\]
Note $P$ and $F$ are coinitial in each other, with respect to both $\mathbf{d}$ and $\mathbf{h}$, i.e.
\[0=\sup_{a\in F}\inf_{p\in P}\mathbf{d}(p,a)=\sup_{p\in P}\inf_{a\in F}\mathbf{d}(p,a)=\sup_{a\in F}\inf_{p\in P}\mathbf{h}(p,a)=\sup_{p\in P}\inf_{a\in F}\mathbf{h}(a,p).\]
Thus
\begin{align*}
\inf_{a\in F}\mathbf{d}(a,q)&\leq\inf_{a\in F,p\in P}(\mathbf{d}(a,p)+\mathbf{d}(p,q))=\inf_{p\in P}\mathbf{d}(p,q)\\
&\leq\inf_{p\in P,a\in F}\mathbf{d}(p,a)+\mathbf{d}(a,q)=\inf_{a\in F}\mathbf{d}(a,q).
\end{align*}
Likewise $\inf\limits_{a\in F}\mathbf{h}(a,q)=\inf\limits_{p\in P}\mathbf{h}(p,q)$.  Now simply note that, by \autoref{dsadp},
\[\inf_{p\in P}\mathbf{d}(p,q)=\inf_{p\in P}\mathbf{h}(p,q).\qedhere\]
\end{proof}

\bibliographystyle{alphaurl}
\bibliography{maths}

\end{document}